\newtheorem{theorem}{Theorem}
\newtheorem*{acknowledgement*}{Acknowledgement}
\newtheorem{assumption}{Assumption}
\newtheorem{corollary}[theorem]{Corollary}
\newtheorem{definition}[theorem]{Definition}
\newtheorem{example}{Example}
\newtheorem{lemma}[theorem]{Lemma}
\newtheorem{problem}[theorem]{Problem}
\newtheorem{construction}[theorem]{Construction}
\newtheorem{proposition}[theorem]{Proposition}
\newtheorem{remark}[theorem]{Remark}
\DeclareMathOperator{\E}{\mathbb E}
\newcommand{\kom}[1]{}
\renewcommand{\kom}[1]{{\bf [#1]}}
\newcommand{\ubar}[1]{\underaccent{\bar}{#1}}
\newcounter{komcounter}
\numberwithin{komcounter}{section}
\title{General Markovian randomized equilibrium existence and construction in zero-sum Dynkin games for diffusions}
\author{
Sören Christensen\\Department of Mathematics, Kiel University\\
\\Kristoffer Lindensjö\\Department of Mathematics, Stockholm University\\
}
\begin{document}
 
\maketitle

\begin{abstract} 
One of the most classical games for stochastic processes is the zero-sum Dynkin (stopping) game. We present a complete equilibrium solution to a general formulation of this game with an underlying one-dimensional diffusion. 
A key result is the construction of a characterizable global $\epsilon$-Nash equilibrium in Markovian randomized stopping times for every $\epsilon > 0$. 
This is achieved by leveraging the well-known equilibrium structure under a restrictive ordering condition on the payoff functions, leading to a novel approach based on an appropriate notion of randomization that allows for solving the general game without any ordering condition. Additionally, we provide conditions for the existence of pure and randomized Nash equilibria (with $\epsilon=0$). Our results enable  explicit identification of equilibrium stopping times and their corresponding values in many cases, illustrated by several examples.
\end{abstract}
 
\section{Introduction}\label{sec:intro}
We consider a general formulation of the classical zero-sum Dynkin game. In this setup, Player $1$ (the maximizer) selects a stopping time $\tau_1$ while Player $2$ (the minimizer) selects a stopping time $\tau_2$.  A distinctive feature of our framework is the allowance for randomized stopping times, a concept that will later play a crucial role in our analysis.
The reward of the players, which Player $1$ wants to maximize and Player $2$ wants to minimize, is 
\begin{align}\label{cost-function}
J(x;\tau_1,\tau_2):= 
\E_x\left(e^{-r(\tau_1 \wedge \tau_2)}\left(
f(X_{\tau_1})\mathbb{I}{\{\tau_1<\tau_2\}} + g(X_{\tau_2})\mathbb{I}{\{\tau_1>\tau_2\}} + h(X_{\tau_1})\mathbb{I}{\{\tau_1=\tau_2\}}
\right)\right),
\end{align}
where
$(X_t)_{t \geq 0}$ with $X_0=x$ is a regular one-dimensional linear diffusion whose state space is an interval ${\cal I} \subseteq  \mathbb{R}$, 
the functions $f,g,h:{\cal I} \rightarrow \mathbb{R}$ are continuous, and $r \geq 0$ is a constant discount rate. Further details regarding model assumptions and definitions are presented in Section \ref{sec:setup}.

An interpretation of the game is that if the maximizer stops first then the payoff for both players corresponds to $f$, 
if the minimizer stops first then the payoff for both players corresponds to $g$, and 
if both players stop at the same time then the payoff for both players corresponds to $h$. This formulation allows for various practical interpretations, such as viewing the payoff as a financial transfer from the minimizer to the maximizer. For related discussions, see, for example, \cite{baurdoux2004further, kifer2000game}.

We will now introduce three well-known, interconnected equilibrium concepts. 
 
1. For $x \in {\cal I}$, the \textit{upper value} of the game is
\begin{align*}
V^*(x):=\adjustlimits \inf_{\tau_2} \sup_{\tau_1} J(x;\tau_1,\tau_2)
\end{align*}
and the \textit{lower value} is
\begin{align*}
V_*(x):=\adjustlimits  \sup_{\tau_1} \inf_{\tau_2} J(x;\tau_1,\tau_2).
\end{align*}
The game is said to have a \textit{value} $V$ given by 
\begin{align*}
V(x):= V^*(x)= V_*(x),
\end{align*}
whenever equality of the upper and lower values holds for each $x\in  {\cal I}$, and in this case the game is said to allow a \textit{Stackelberg equilibrium}.

2. A stopping time pair $(\tau^*_1,\tau^*_2)$ is said to be an \textit{$\epsilon$-Nash equilibrium}, for a fixed constant $\epsilon \geq 0$, if
\begin{align}\label{def:SE0}
J(x;\tau_1,\tau^*_2) - \epsilon \leq J(x;\tau^*_1,\tau^*_2) \leq J(x;\tau^*_1,\tau_2)+\epsilon,
\end{align}
for all admissible stopping times $\tau_1$ and $\tau_2$, and all initial states $x \in {\cal I}$; in the vocabulary of the present paper this corresponds to the definition of a \textit{global $\epsilon$-Nash equilibrium}.

3. If a pair $(\tau^*_1,\tau^*_2)$ is a global $\epsilon$-Nash equilibrium with $\epsilon=0$, then it is said to be a \textit{global Nash equilibrium}. 

Let us discuss the relationship between the equilibrium definitions. 
It is clear that a global Nash equilibrium corresponds to a global $\epsilon$-Nash equilibrium for each $\epsilon \geq 0$. 
As expected, it holds that if a global Nash equilibrium $(\tau^*_1,\tau^*_2)$ exists then it corresponds to the unique value of the game, i.e., $J(x;\tau^*_1,\tau^*_2)=V(x)$ (see \cite[Remark 5.2]{christensen2023markovian} for an argument and references). It also holds that if a global Nash equilibrium exists for each $\epsilon>0$ then the game has a value $V$. 
Moreover, if the game has a value then an $\epsilon$-Nash equilibrium exists for each $\epsilon>0$, but only necessarily in the sense that there for each fixed $x$ exists a  pair $(\tau^*_1,\tau^*_2)$ such that \eqref{def:SE0} holds. Indeed, the existence of the value does not immediately imply the existence of a global $\epsilon$-Nash equilibrium, since that requires the pair $(\tau^*_1,\tau^*_2)$ to satisfy \eqref{def:SE0} for each $x$.

Dynkin games were introduced by Dynkin in the 1960s \cite{dynkin1969game} and have since been the subject of extensive study; several references are mentioned in Section \ref{sec:compwork}. The analysis becomes particularly tractable in the well-studied case corresponding to the ordering condition 
$f(x) \leq h(x) \leq g(x)$ for all $x$. For a general continuous-time, right-continuous, strong Markov process, this case is rigorously examined in \cite{ekstrom2008optimal}, which establishes a fundamental result on the existence of equilibria. Specifically, it is shown that the game possesses a value and admits a Nash equilibrium under the assumption that the state process is also quasi-left continuous. Moreover, this Nash equilibrium is achieved through stopping times corresponding to entry times into certain sets within the state space. In this paper, we refer to such equilibria as Markovian \textit{pure} Nash equilibria (see Section \ref{sec:setup}). Notably, when the ordering condition of \cite{ekstrom2008optimal} is satisfied, there is no need to introduce notions of randomized stopping.

\subsection{Main contributions}

When the ordering condition $f(x) \leq h(x) \leq g(x)$ is removed, the analysis becomes significantly more complex. The objective of the present paper is to search for the three notions of equilibria in Markovian pure as well as \textit{Markovian randomized stopping times}  in the context of a one-dimensional diffusion without any ordering conditions for the payoff functions; see Definition \ref{def:equilibrium-defs} for details.
By Markovian randomized stopping time we mean a particular randomized stopping time which allows an interpretation in terms of a state-dependent stopping rate where the accumulated stopping rate is allowed to increase in a manner which is not necessarily absolutely continuous with respect to the Lebesgue measure; see Definition \ref{def:puremixedstoppingtimes} and Remark \ref{rem:rand-strat}. The main contributions are:

\begin{enumerate}[(i)]
 
\item Without any ordering conditions on the payoff functions $f,g$ and $h$ we establish, for any $\epsilon>0$, the existence of a characterizable global Markovian randomized $\epsilon$-Nash equilibrium; see Definition \ref{def:equilibrium-defs} and Theorem \ref{thm:construction}. We remark that the \textit{admissible} stopping times (Definition \ref{def:admiss-ST}) are not restricted to be Markovian.

\item The equilibrium value $V$ is characterized using super- and submartingale conditions building on established methods; see Proposition \ref{asso-game:characterization} and Theorem \ref{thm:construction}. The equilibrium stopping times are explicitly constructed in Theorems \ref{thm:construction} and \ref{simplified-construction}. 

\item We provide sufficient conditions in terms of $f,g$ and $h$ guaranteeing the existence of a global Markovian randomized Nash equilibrium (i.e., with $\epsilon=0$); see  Theorem \ref{simplified-construction}.  In the literature, such an equilibrium is also called \textit{Markov perfect}. We also show that global Markovian randomized Nash equilibrium existence cannot generally be established; see Example \ref{example:non-exist-NE}. 

\item The developed theory allows for explicit solutions to specific games with randomized as well as pure equilibria; see Remark \ref{rem:identifying} and the examples in e.g., Section \ref{sec:fur-ex}.

\item We show that global Markovian pure $\epsilon$-Nash equilibrium existence cannot generally be established; cf. Example \ref{example:non-exist-NE}, as well as Theorem \ref{thm:NE-conditions2}. We hence conclude that randomization is in this sense needed for the present game. 

\item In Theorem \ref{thm:NE-conditions1} we provide new sufficient conditions for global Markovian pure Nash equilibrium existence. We also show that the sufficient ordering conditions of \cite{ekstrom2008optimal} are not necessary for the existence of such an equilibrium; see Example \ref{example-no-ordering-with-pureNE}.

\end{enumerate}

\subsection{Comparison with related work}\label{sec:compwork}
Markovian randomized stopping times defined in line with the definition of the present paper have recently been studied in the context of various stopping games in 
\cite{bodnariu2022local,decamps2023war,ekstrom2017dynkin}.
For a detailed discussion of the relationship between these works and the present paper, see Remark \ref{rem:rand-strat}.

A broader, non-Markovian formulation of the zero-sum Dynkin game is studied e.g., in \cite{laraki2005value}, where it is shown that such games admit a value in the space of general randomized stopping times analogous to our admissible stopping times. 
From this result follows that the game of the present paper has a value for each specific starting value $X_0=x$. A main advantage with the theory developed in the present paper is that we are able to identify that the corresponding $\epsilon$-Nash equilibria can---for the present diffusion setting---be found in the set of Markovian randomized stopping times, which in turn allows us to
(i) show that a global randomized $\epsilon$-Nash equilibrium exists for each $\epsilon>0$, 
(ii) construct explicit equilibrium stopping times, which have a clear game-theoretical interpretation due to the Markovian structure (cf. subgame-perfection),
(iii) characterize the equilibrium value $V=V(x)$ as the starting value $X_0=x$ varies, 
(iv) establish conditions for the existence of pure and randomized global Nash equilibria ($\epsilon=0$), and 
(v) solve specific problems explicitly. 

These types of results correspond to an approach for the treatment of optimal stopping problems. There, one typically treats Markov state processes separately from non-Markov state processes, see, for example, \cite{Peskir,shiryaev2007optimal}. 
There are two reasons for this: On the technical side, there are the characterization possibilities for solutions that are available for Markov processes, which in turn offer methods for calculating solutions, e.g., via the relationship between diffusion processes and the associated differential operators. Secondly, the Markovian nature of the optimal stopping times (first entry times) plays a crucial role in ensuring that rational agents adhere to these strategies over time. This structure aligns the stopping rules with the dynamic consistency expected in rational decision-making.

This view has until recently not been systematically adopted in the treatment of continuous time Markov process stopping games allowing for randomized stopping, and the present paper addresses this gap for the zero-sum game, making it, to the best of our knowledge, the first study in this direction. The recent study \cite{christensen2024existence} explores a related class of non-zero-sum games.
 More precisely, a class of Dynkin games of war-of-attrition type for a general one-dimensional diffusion is considered.
 Assuming an ordering condition corresponding to the players preferring to stop last and being indifferent between stopping first or simultaneously, the existence of randomized Markov-perfect equilibria ($\epsilon=0$) is established using Kakutani's theorem. The ordering condition was essential to ensuring the continuity of the underlying functionals. Although the methodology in \cite{christensen2024existence} provides a blueprint for a proof of existence of equilibria in more general settings, it differs fundamentally from our approach, which explicitly constructs $\epsilon$-Nash equilibria in Markovian randomized stopping times.  
It seems that the two methods developed are not suitable for the respective other problem and they should hence be seen as complementary to each other: The method from \cite{christensen2024existence} provides the existence of equilibria under sufficient continuity, while the method presented in this paper can be used to generalize the existing equilibria in a smaller class of games to ($\epsilon$-)equilibria in a larger class. Additionally, the present paper provides an explicit equilibrium construction allowing for equilibrium characterization.

A recent paper studying essentially the same game setup as \cite{christensen2024existence} is \cite{decamps2024mixed}. This paper also deals with the theoretical question of existence without explicit constructions, but sets up the fixed-point theorems in a slightly different way. As with \cite{christensen2024existence}, both the game formulation and the methods in \cite{decamps2024mixed} differ fundamentally from those of the present paper.

As already mentioned, the zero-sum game without randomized stopping under the additional ordering $f\leq h\leq g$ is treated in \cite{ekstrom2008optimal,peskir2009optimal}; see Section \ref{sec:assoc-game} for details. Moreover, a systematic investigation of discrete time Markovian Dynkin games when allowing for Markovian randomized stopping times---suitably defined for discrete time problems as in \cite{christensen2020Timemyopic,ferenstein2007randomized}---is presented in \cite{christensen2023markovian}.

 A complete survey of the vast literature on stopping games is not within the scope of the present paper. 
 Let us however highlight a few additional relevant contributions (see also Remark \ref{rem:rand-strat}, where we relate the notion of Markovian randomized stopping times to the literature): 
\begin{itemize}
    \item \cite{ekstrom2017dynkin} studies a two-player zero-sum stopping game under heterogeneous beliefs about the drift of the state process. Both pure and randomized Markovian equilibria are analyzed.
\item \cite{decamps2023war} examines a two-player war-of-attrition game in an SDE setting, providing an equilibrium characterization via variational inequalities.
\item \cite{de2018nash} analyzes a two-player non-zero-sum stopping game and establishes sufficient conditions for the existence of equilibria using threshold stopping strategies.
\item \cite{grun2013dynkin} uses viscosity solutions to study non-zero-sum stopping games in SDE settings, focusing on randomized stopping times.
\item \cite{martyr2021nonzero} explores connections between Markovian non-zero-sum stopping games and generalized Nash equilibrium problems.
\item For non-Markovian settings, contributions include \cite{bayraktar2017robust,de2020value,riedel2017subgame,rosenberg2001stopping,touzi2002continuous}. Connections to backward stochastic differential equations with reflection are studied in \cite{cvitanic1996backward}.
\end{itemize}

 \subsection{Structure of the paper}
The remainder of the present paper is organized as follows: 
Section \ref{sec:setup} specifies the mathematical model, provides formal definitions, and introduces an important associated game that is crucial for the subsequent analysis. 
Section \ref{sec:NE-exist} contains the general global Markovian randomized $\epsilon$-Nash equilibrium existence result and the characterization result for the value of the game.  
Section \ref{sec:howtofind} provides sufficient conditions for global Markovian randomized Nash equilibrium existence ($\epsilon=0$).  
Section \ref{sec:NE-condi} further studies the question of when different kinds of equilibria exist. Examples are provided throughout the paper. While the main ideas of the present paper are quite easy to understand, the general equilibrium construction as well as several of the proofs are technical and lengthy. To increase readability we have hence decided to relegate the general construction to Appendix \ref{appendix-construction}, and all proofs to 
Appendices \ref{proof-app-B}, \ref{proof-app-C}, \ref{proof-app-D} and \ref{proof-app-E}.

\section{Model specification, equilibrium definitions and an associated game}\label{sec:setup}
The process $X=(X_t)_{t \geq 0}$ is a one-dimensional diffusion that lives on an interval ${\cal I}=(\alpha,\beta) \subseteq  \mathbb{R}$, with $-\infty \leq \alpha < \beta \leq \infty$, and solves the stochastic differential equation (SDE)
\begin{align*}
dX_t = \mu(X_t)dt + \sigma(X_t)dW_t, \enskip X_0=x,
\end{align*}
where $\mu:{\cal I} \rightarrow \mathbb{R}$ and $\sigma:{\cal I} \rightarrow (0,\infty)$ are continuous, and
$(W_t)_{t \geq 0}$ is a standard one-dimensional Wiener process, living on a complete filtered probability space $(\Omega, {\cal F}, \mathbb{P}, ({\cal F}_t)_{t \geq 0})$ that satisfies the usual conditions. In particular, we assume that the SDE has a weak solution that is unique in law; cf. e.g., \cite[Sec. II.6.]{borodin2012handbook} or \cite[Ch. 5.5]{Karatzas2}. The corresponding probability measures and expected values are denoted by $\mathbb{P}_x(\cdot):= \mathbb{P}_x(\cdot| X_0=x)$ and $\E_x(\cdot):= \E_x(\cdot| X_0=x)$, respectively. Further model assumptions are found in Section \ref{sec:assum}.

We now define the sets of admissible stopping times in accordance with a general framework for randomized stopping times, see, e.g., \cite{ekstrom2017dynkin, ekstrom2022detect} as well as \cite{rosenberg2001stopping, touzi2002continuous}. Notably, these sets encompass any stopping time with respect to ${\cal F}^X$, which denotes the smallest filtration to which $X$ is adapted, satisfying the usual conditions. Furthermore, each randomized stopping time that allows for distinct randomization mechanisms for each player is deemed admissible. More precisely:
 
\begin{definition}[Admissible stopping times] \label{def:admiss-ST} 
A stopping time for Player $i=1,2$ is said to be admissible if it is on the form
\begin{align}\label{def:admissST-eq}
\tau_i = \tau_{\Psi^{(i)}} = \inf \{t \geq 0: \Psi_t^{(i)} \geq E_i \}
\end{align}
where (i)
$\Psi^{(i)} = (\Psi_t^{(i)})_{t \geq 0}$,  with $\Psi_{0-}^{(i)}= 0$, is a 
non-decreasing, right-continuous and ${\cal F}^X$-adapted process that takes values in $[0,\infty]$, and 
(ii) $E_i$ is an $Exp(1)$-distributed random variable (assumed to live on our probability space) that is independent of all other random sources.  

The spaces of admissible stopping times are denoted by $\mathbb{T}_i,i=1,2$.
\end{definition}
We use the convention that $\inf \emptyset = \infty$, so that for example $\Psi_t^{(i)}=0$ for all $t\geq0$ implies that $\tau_{\Psi^{(i)}}=\infty$.

We are now ready to define the notions of Markovian randomized and pure stopping times, as well as the notions of equilibria (in line with Section \ref{sec:intro}).

\begin{definition}[Markovian randomized and pure stopping times] \label{def:puremixedstoppingtimes}
The stopping time in \eqref{def:admissST-eq} is said to be a Markovian randomized stopping time if $\Psi^{(i)}$ is on the form
\begin{align}\label{eq:psi-strat}
\Psi^{(i)}_t= A^{(i)}_t  +  \infty \mathbb{I}\{\tau^{D_i} \leq t \}, \enskip t  \geq 0, 
\end{align} 
where $(A_t^{(i)})_{t \geq 0}$ is a continuous additive functional of $X$ (see e.g., \cite[Ch. X]{revuz2013continuous}) and
\begin{align*}
\tau^{D_i}:= \inf \{t \geq 0:X_t \in  D_i\},
\end{align*}
where $D_i \subseteq {\cal I}$ is a measurable (stopping) set. 

The stopping time in \eqref{def:admissST-eq} is said to be a Markovian pure stopping time if it is on the form of a first entry time, i.e., in case $A^{(i)}_t = 0,t \geq 0$ so that $\tau_{\Psi^{(i)}} = \tau^{D_i}$.
\end{definition}

\begin{definition}[Equilibrium definitions] \label{def:equilibrium-defs}\enskip 

\begin{itemize}

\item A Markovian randomized stopping time pair $(\tau^*_1,\tau^*_2)=(\tau_{\Psi^{(1)}},\tau_{\Psi^{(2)}})$ is said to be a global Markovian randomized $\epsilon$-Nash equilibrium, for a fixed constant $\epsilon\geq 0$, if
\begin{align}\label{def:SE}
\begin{split}
& J(x;\tau_1,\tau^*_2) - \epsilon \leq J(x;\tau^*_1,\tau^*_2) \leq J(x;\tau_1^*,\tau_2) + \epsilon\\
& \enskip \text{for all $\tau_1 \in \mathbb{T}_1, \tau_2 \in \mathbb{T}_1$, and $x\in {\cal I}$.}
\end{split}
\end{align}
If \eqref{def:SE} is satisfied with $\epsilon=0$ then $(\tau^*_1,\tau^*_2)=(\tau_{\Psi^{(1)}},\tau_{\Psi^{(2)}})$ is said to be a global Markovian randomized Nash equilibrium.

\item A Markovian pure stopping time pair $(\tau^*_1,\tau^*_2)=(\tau^{D_1},\tau^{D_2})$ is said to be a global Markovian pure $\epsilon$-Nash equilibrium if it satisfies \eqref{def:SE}. 
If it satisfies \eqref{def:SE} with $\epsilon=0$ then it is said to be a global Markovian pure Nash equilibrium.

\item The value of the game $V$ is defined as $V(x):=V^*(x)=V_*(x)$, whenever this equality holds for each $x\in  {\cal I}$, where 
\begin{align}\label{214fadfr32}
\begin{split}
V^*(x)&:=\adjustlimits \inf_{\tau_2 \in \mathbb{T}_2} \sup_{\tau_1 \in \mathbb{T}_1} J(x;\tau_1,\tau_2),\\
V_*(x)&:=\adjustlimits  \sup_{\tau_1\in \mathbb{T}_1} \inf_{\tau_2\in \mathbb{T}_2} J(x;\tau_1,\tau_2).
\end{split}
\end{align}

\end{itemize}

\end{definition}

\begin{remark}[Interpretation of Markovian randomized stopping]\label{rem:rand-strat} 
The interpretation of a Markovian pure stopping time is that at each time $t$ the decision to stop or not depends only on the state of $X_t$; in particular, when $A_t^{(i)}=0, t \geq 0$ we have that \eqref{eq:psi-strat} corresponds to immediate stopping on $D_i$ and not stopping on $D_i^c$. 

In the randomized case, the interpretation relies on the form of the continuous additive functionals
$(A_t)$, which, for a regular linear diffusion, can be represented as
\begin{align}\label{asfaf2}
A_t =  \int_{\cal I}\frac{l_t^y}{\sigma^2(y)}k(dy)
\end{align}
for every $t$ a.s., for some measure $k$, where $(l_t^{y})$ is the local time of $X$ at $y \in {\cal I}$, see \cite{borodin2012handbook}. This shows that our definition of Markovian randomized stopping times is in line with that of \cite[Sec. 5]{ekstrom2017dynkin} and  \cite{decamps2023war}. 

If we assume that the measure $k$ can be specified as a linear combination of a Lebesgue density $\lambda: {\cal I}\rightarrow [0,\infty)$ and Dirac measures with mass $\sigma^2(x_i)d_i>0$ on separated points $x_i \in {\cal I}$---i.e., $k(dx)= \lambda(x)dx + \sum_i d_i\sigma^2(x_i) \delta_{x_i}(dx)$--- then we find, using the occupation time formula, that \eqref{asfaf2} can be represented as 
\begin{align}\label{asdar3q432tgad}
A_t =  \int_0^t\lambda(X_s)ds + \sum_i  d_i l_t^{x_i}.
\end{align}
Indeed, the equilibrium stopping time intensities $(A_t^{(i)})$ in \eqref{eq:psi-strat} will in most examples be of the form \eqref{asdar3q432tgad}; see e.g., Section \ref{sec:fur-ex}. (Of course, a more general situation would allow the decomposition of $k$ in terms of also a singular continuous measure.) The specification \eqref{asdar3q432tgad} highlights the interpretation of a Markovian randomized stopping time in terms of a state dependent stopping rate, where $\lambda(\cdot)$ corresponds to a state-dependent stopping rate of Lebesgue density type---equivalent to stopping the first time that a Cox process jumps (\cite[Remark 2.10]{christensen2020time})---and $x_i$ correspond to points where the accumulated state dependent stopping rate increases in a singular way. 

Notably, the specification in \eqref{asdar3q432tgad} is consistent with the local time pushed mixed stopping times defined in \cite{bodnariu2022local}. For a more general justification of the term "Markovian", we refer to \cite[Sec. 3]{decamps2023war}; for a discussion of the smoothness of corresponding expectation functionals see \cite{schultz2024differentiabilityrewardfunctionalscorresponding}.
\end{remark}

\subsection{Further assumptions, notation and related observations}\label{sec:assum}

Throughout this paper, we work under the assumptions outlined in this section. The following notations will be used:
\begin{align*}
B_{1}&:= B^{g\leq h \leq f}:= \{x \in {\cal I}: g(x)\leq h(x) \leq f(x) \}\\
B_{2}&:= B^{f \lesssim h \lesssim g}:= \{x \in {\cal I}: f(x) \leq h(x) < g(x) \}  \cup \{x \in {\cal I}: f(x) < h(x) \leq g(x) \} \\
B_{3}&:= B^{h<g<f} := \{x \in {\cal I}: h(x) < g(x) < f(x) \}\\
B_{4}&:= B^{g<f<h} := \{x \in {\cal I}: g(x) < f(x) < h(x) \}\\
B_{5}&:= B^{f\leq g<h} := \{x \in {\cal I}: f(x) \leq g(x) < h(x) \}\\
B_{6}&:= B^{h<f\leq g} :=  \{x \in {\cal I}: h(x) < f(x) \leq g(x) \}.
\end{align*}
Note that e.g., $B_{1}$ and $B^{g\leq h \leq f}$ refer to the same set. While we primarily use the latter notation, the former will also prove useful in certain contexts. Additionally, observe that all sets $B_i$ are disjoint with $\cup_iB_i={\cal I}$.

\begin{assumption}[Standing assumptions] \label{assum:1}\quad

\begin{enumerate}[(i)]

\item \label{assum:A1} Each payoff function $f,g,h:{\cal I}\rightarrow \mathbb{R}$, cf. \eqref{cost-function}, can be written as the difference between convex functions. 
\item\label{assum:A2} $f,g$ and $h$ are such that points in $\cup_i  \partial(B_i)$ are separated in the sense that the number of points in $\cup_i  \partial(B_i)\cap A$ is finite for each compact $A \subseteq {\cal I}$.

\item \label{assum:A3} The boundaries of the state space ${\cal I}$ are natural.

\item \label{assum:A4} It holds that
\begin{align}\label{at-infty-assum}
\lim_{t\rightarrow  \infty} e^{-rt} f(X_t) = \lim_{t\rightarrow  \infty} e^{-rt} g(X_t) = \lim_{t\rightarrow  \infty} e^{-rt} h(X_t) = 0, \enskip \mathbb{P}_x\mbox{-a.s.}
\end{align}
We also use the convention $e^{-r\tau} w(X_\tau):=0$ on $\{\tau=\infty\}$ for any function $w: {\cal I}\rightarrow \mathbb{R}$.
Moreover, for each $x \in {\cal I}$, it holds that
\begin{align}\label{assum:integrability}
M(x):=\E_x\left( \sup_{t\geq 0}e^{-rt}|f(X_t)|  + \sup_{t\geq 0}e^{-rt}|g(X_t)| + \sup_{t\geq 0}e^{-rt}|h(X_t)|\right) < \infty.
\end{align}

\end{enumerate}
\end{assumption}

All parts of Assumption \ref{assum:1} are standard assumptions except for Assumption \ref{assum:1}\eqref{assum:A2} which can be interpreted as the payoff functions not being allowed to \textit{vary to wildly} when intersecting each other; it is moreover clear that virtually any application would satisfy this condition. Note also that this assumption implies e.g., that the sets of isolated points $\mbox{iso}(B_i),i=1,...,6$ have no accumulation points.

Let us make some useful observations indicating how the payoff functions can be decomposed into intervals where they correspond to sub- and supermartingales; which leads up to our second standing assumption (Assumption \ref{assum:2}). The observations are made for $f$, but they hold also for $g$. 

By Assumption \ref{assum:1} we have the generalized It\^o-Tanaka formula 
\begin{align*}
f(X_t) = f(x)+\int_0^t 
f'_{-}(X_s) (\mu(X_s)ds+ \sigma(X_s)dW_s)+ 
\frac{1}{2} \int_{\cal I} l_t^yf''(dy)
\end{align*}
where $f'_{-}$ is the left derivative, $f''$ is the second derivative measure, and $(l_t^y)$ is the continuous version of the symmetric local time of $X$ at $y \in \cal I$; cf. e.g., \cite[p. 68 and 75]{Peskir}. Note that the formula holds also when replacing the left derivative with the right derivative. Hence, by the occupation time formula, we also have
\begin{align*}
f(X_t) = f(x)+ \int_{\cal I}\frac{l_t^y}{\sigma^2(y)}\mathbb{K}_Xf(dy) + \int_0^tf'_{-}(X_s) \sigma(X_s)dW_s, 
\end{align*}
where the operator  $\mathbb{K}_X$ is defined so that $\mathbb{K}_Xf$ is the finite (cf. \cite[Ch. 3.6]{Karatzas2}) signed measure given by
\[\mathbb{K}_X f(dx)= \mu(x)f_{-}'(x)dx+\frac{1}{2}\sigma^2(x)f''(dx).\]
Now consider the signed measure $(\mathbb{K}_X-r)f(dy):=\mathbb{K}_X f(dy)-rf(y)dy$ and its unique Jordan decomposition 
\begin{align*}
(\mathbb{K}_X-r)f(dx)=\left( (\mathbb{K}_X-r)f \right)^{+}(dx) - \left((\mathbb{K}_X-r)f\right)^{-}(dx).
\end{align*}
We then have
\begin{align}\label{2q4124}
e^{-rt}f(X_t) = f(x)+ 
\int_0^te^{-rs}dC^{+}_s(f)-
\int_0^te^{-rs}dC^{-}_s(f)
+ \int_0^t e^{-rs}f'_{-}(X_s) \sigma(X_s)dW_s, 
\end{align}
where
\begin{align}
C_t^{+}(f):&=\int_{\cal I}\frac{l_t^y}{\sigma^2(y)}\left( (\mathbb{K}_X-r)f \right)^{+}(dy),\label{Cfplusminus1}\\
C_t^{-}(f):&=\int_{\cal I}\frac{l_t^y}{\sigma^2(y)}\left( (\mathbb{K}_X-r)f \right)^{-}(dy)\label{Cfplusminus2}.
\end{align}
We denote the support of any measure $k$ by $\mbox{\textnormal{supp}}(k)$. We are now ready to state: 

\begin{assumption}[Standing assumption]\label{assum:2} 
The points in $\partial(\mbox{\textnormal{supp}}(\left( (\mathbb{K}_X-r)f \right)^{+}))$ are separated. This holds also for 
$\partial(\mbox{\textnormal{supp}}(\left( (\mathbb{K}_X-r)f \right)^{-}))$, as well as for $\partial(\mbox{\textnormal{supp}}(\left( (\mathbb{K}_X-r)g \right)^{+}))$
and 
$\partial(\mbox{\textnormal{supp}}(\left( (\mathbb{K}_X-r)g \right)^{-}))$. 
\end{assumption} 

Let us make some clarifying comments. If $f$ is twice continuously differentiable except on a countable set of separated points $x_i \in {\cal I}, i \in I$ (where $I$ is an index set), then we have the It\^o-Tanaka formula (\cite[p. 75]{Peskir})
\begin{align*}
f(X_t) & = f(x)+
\int_0^t\left(\mu(X_s)f'(X_s)+\frac{1}{2}\sigma^2(X_s)f''(X_s)\right)\mathbb{I}_{\{X_s \neq x_i,\forall i\}}ds + \frac{1}{2} \sum_{i}  \left(f'(x_i+)-f'(x_i-)\right)l_t^{x_i} \\
& \enskip + \int_0^t \sigma(X_s)f'(X_s) \mathbb{I}_{\{X_s \neq x_i,\forall i\}}dW_s, 
 \end{align*}
and in this case $\mathbb{K}_Xf$ corresponds (cf. the occupation time formula) to 
\begin{align*}
\mathbb{K}_X f(dx)=\left(\mu(x)f'(x)+\frac{1}{2}\sigma^2(x)f''(x)\right)\mathbb{I}\{x \neq x_i,\forall i\}dx+\frac{1}{2}\sigma^2(x_i)  \sum_{i} \left({f'(x_i+)-f'(x_i-)}\right)\delta_{x_i}(dx). 
\end{align*}
Moreover, in the twice continuously differentiable case we have
\begin{align*}
\mathbb{K}_X(dx) & = \mathbb{L}_Xf(x)dx, \enskip  \mbox{where}\\
\mathbb{L}_Xf(x): &= \mu(x)f'(x)+\frac{1}{2}\sigma^2(x)f''(x).
\end{align*}
It is immediately realized that Assumption \ref{assum:2} can be verified in most applications; essentially its interpretation corresponds to the subsets in the state space where e.g., $e^{-rt}f(X_t)$ is a strict supermartingale not being allowed to behave too wildly.

\textbf{Further notation and related observations.} For any set $B \subseteq {\cal I}$, 
$\bar{B}$ is the closure, 
$B^c$ is the complement, 
$\partial B$ is the set of boundary points, 
$\mbox{int}(B)$ is the interior 
and 
$\mbox{iso}(B)$ is the set of isolated points. 
We also use the notation $(\cdot)_+:=\max(\cdot;0)$, as well as $\tau^{B}:= \inf \{t \geq 0:X_t \in  B\}$. We rely on the following observations in the subsequent analysis. 
\begin{itemize}
\item ${B^{f \lesssim h \lesssim g}} \cup {B^{f\leq g<h}} \cup {B^{h<f\leq g}}\subseteq \{x \in {\cal I}: f(x) \leq g(x)\}=:B^{f \leq g}$.
\item ${B^{g\leq h \leq f}} \cup {B^{h<g<f}} \cup {B^{g<f<h}}\subseteq \{x \in {\cal I}: g(x) \leq f(x) \}
=: B^{g \leq f}$.
\end{itemize}
Note 
that $B^{f \leq g}$ is not generally contained in ${B^{f \lesssim h \lesssim g}} \cup {B^{f\leq g<h}} \cup {B^{h<f\leq g}}$, cf. the case when $f(x)=g(x)=h(x)$; and that $B^{g \leq f}$ is not generally subset of ${B^{g\leq h \leq f}} \cup {B^{h<g<f}} \cup {B^{g<f<h}}$, cf. the case when $f(x)=g(x)>h(x)$. However, we do have: 
\begin{itemize}
\item $B^{f < g}:=\{x \in {\cal I}: f(x) < g(x)\}\subseteq {B^{f \lesssim h \lesssim g}} \cup {B^{f\leq g<h}} \cup {B^{h<f\leq g}}$.
\item $B^{g < f}:=\{x \in {\cal I}: g(x) < f(x)\}\subseteq {B^{g\leq h \leq f}} \cup {B^{h<g<f}} \cup {B^{g<f<h}}$.
\item $\partial ({B^{f \lesssim h \lesssim g}} \cup {B^{f\leq g<h}} \cup {B^{h<f\leq g}}), \partial ({B^{g\leq h \leq f}} \cup {B^{h<g<f}} \cup {B^{g<f<h}}) \subseteq \{x \in {\cal I}: f(x)=g(x)\}=:B^{f=g}$.
\item ${B^{f \lesssim h \lesssim g}} \cup {B^{f\leq g<h}} \cup {B^{h<f\leq g}} \cup  B^{f=g}=B^{f \leq g}$.
\item ${B^{g\leq h \leq f}} \cup {B^{h<g<f}} \cup {B^{g<f<h}} \cup  B^{f=g}=B^{g \leq f}$.
\end{itemize}

\subsection{An associated game with the same value}\label{sec:assoc-game}

Our equilibrium construction relies on a connection between the original game and the following associated game, which can be viewed as a reformulation of the original game designed to satisfy the payoff function ordering $f\leq h\leq g$ assumed in \cite{ekstrom2008optimal}. As we shall see in Section \ref{sec:NE-exist}, the value of the associated game coincides with that of the original game (Definition \ref{def:equilibrium-defs}). 
To increase readability, we will use the notation $V$ for the value of both games. It is, however, important to note that the equilibrium stopping times of the associated game are generally \textit{not} equilibrium stopping times for the original game. 

Proofs for this section are found in Appendix \ref{proof-app-B}.

\begin{problem}[An associated game] \label{thm:erik-goran} The associated game is a specification of the game so far presented corresponding to the functions 
$f,g$ and $h$ being replaced with functions 
$\tilde f,\tilde g,\tilde h: {\cal I}\rightarrow \mathbb{R}$ given by:
\begin{align}\label{assoc-game-def-fgh}
\begin{split}
&\text{$\tilde f = \tilde h \leq \tilde g$ on ${\cal I}$ with:}\\ 
&\text{$\tilde f= f$ and $\tilde g=g$, on $B^{f \leq g}$, and}\\
&\text{$\tilde f = \tilde g = \tilde h$, on $B^{g \leq f}$, where 
$\tilde f = f$ on ${B^{g<f<h}}$, $\tilde g = g$ on ${B^{h<g<f}}$ and $\tilde h = h$ on ${B^{g\leq h \leq f}}$.}
\end{split}
\end{align}
\end{problem}
See Figure \ref{fig-assoc} for an illustrative example. Note that $\tilde f,\tilde g$ and $\tilde h$ are completely determined by \eqref{assoc-game-def-fgh} and that they satisfy the assumptions for $f,g$ and $h$ (Section \ref{sec:assum}).  

We remark that  having $\tilde h:=\tilde f$ is arbitrary in the sense that we could also have selected $\tilde h$ according to the condition $\tilde f  \leq \tilde h \leq \tilde g$.

The associated game is constructed in order to satisfy the standard payoff ordering assumption (compare the first part of \eqref{assoc-game-def-fgh} and  Section \ref{sec:intro}). We thus obtain the following result, the first part of which is a version of \cite[Theorem 2.1]{ekstrom2008optimal}.  
\begin{proposition}[Equilibrium existence for the associated game \cite{ekstrom2008optimal} and related observations]\label{thm:assoc-game} \enskip

\begin{enumerate}[(i)]
\item A global Markovian pure Nash equilibrium $(\tilde\tau_1^*,\tilde\tau_2^*)$, whose value corresponds to a continuous function $V: {\cal I}\rightarrow \mathbb{R}$, exists. 
More precisely,  
\begin{align*}
 \tilde \tau_i^*= \inf\{t \geq 0: X_t \in D_i^{*}\}, \enskip i=1,2,
\end{align*}
where
\begin{align}\label{def-D1-D2-sets-tilde}
\begin{split}
D_1^{*}:&=\{x \in {\cal I}: \tilde f(x)=V(x)\},\\
D_2^{*}:&=\{x \in {\cal I}: \tilde g(x)=V(x)\}.
\end{split}
\end{align}
Moreover, $\tilde f(x)\leq V(x) \leq\tilde g(x)$, for each $x \in {\mathcal I}$.

\item  For $x \in B^{g \leq f} \supseteq  {B^{g\leq h \leq f}} \cup {B^{h<g<f}} \cup {B^{g<f<h}} $, it holds that $V(x) = \tilde f(x)=\tilde g(x)$. 
Moreover, in equilibrium both players stop on $B^{g \leq f}$; in particular, 
\begin{align}\label{asdasdDd}
{B^{g\leq h \leq f}} \cup {B^{h<g<f}} \cup {B^{g<f<h}} \subseteq B^{g \leq f} \subseteq  D_i^{*} , \enskip i =1,2.
\end{align} 
\end{enumerate}
\end{proposition}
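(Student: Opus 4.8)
The statement is essentially a transcription of \cite[Theorem 2.1]{ekstrom2008optimal} to the present diffusion setting, so the plan is to verify that the associated game of Problem \ref{thm:erik-goran} falls within the scope of that result and then read off the conclusions. First I would check the hypotheses: by construction the payoff functions of the associated game satisfy the ordering $\tilde f = \tilde h \le \tilde g$ on ${\cal I}$; the functions $\tilde f, \tilde g, \tilde h$ are continuous and inherit Assumptions \ref{assum:1} and \ref{assum:2} from $f, g, h$ (as noted right after Problem \ref{thm:erik-goran}), so in particular the integrability bound \eqref{assum:integrability} and the vanishing conditions \eqref{at-infty-assum} hold with $\tilde f, \tilde g, \tilde h$ in place of $f, g, h$; the process $X$ is a regular one-dimensional diffusion, hence a right-continuous strong Markov process with continuous paths and thus quasi-left continuous; and the boundaries of ${\cal I}$ are natural by Assumption \ref{assum:1}\eqref{assum:A3}. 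These are precisely the ingredients of \cite{ekstrom2008optimal}, so that result produces a value $V$ for the associated game together with a Nash equilibrium given by the first entry times $\tilde\tau_i^* = \inf\{t \ge 0 : X_t \in D_i^*\}$ into the sets $D_i^*$ of \eqref{def-D1-D2-sets-tilde}; since these strategies are first entry times into fixed subsets of the state space, the equilibrium is automatically global and Markovian pure in the sense of Definition \ref{def:equilibrium-defs}. Continuity of $V$ is not automatic for a general strong Markov process, but it holds in the regular one-dimensional diffusion case, which I would take from the diffusion-specific treatment in \cite{peskir2009optimal} (alternatively, between the continuous obstacles $\tilde f$ and $\tilde g$ the value agrees with an $r$-harmonic function of $X$, and all interior points of ${\cal I}$ are regular, which gives continuity).

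For the bounds $\tilde f(x) \le V(x) \le \tilde g(x)$ I would argue directly: because $\tilde f = \tilde h$, Player $1$ choosing $\tau_1 = 0$ secures the payoff $\tilde f(x)$ against every response of Player $2$, whence $V_*(x) \ge \tilde f(x)$; dually, if Player $2$ chooses $\tau_2 = 0$ then Player $1$'s best reply is to wait, yielding $\tilde g(x) \ge \tilde h(x)$, whence $V^*(x) \le \tilde g(x)$; since the value exists this gives $\tilde f \le V \le \tilde g$ on ${\cal I}$ (this is of course also part of the conclusion of \cite{ekstrom2008optimal}).

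Part (ii) is then immediate. On $B^{g \le f}$ the construction \eqref{assoc-game-def-fgh} forces $\tilde f = \tilde g = \tilde h$, so the sandwich from part (i) pins down $V(x) = \tilde f(x) = \tilde g(x)$ for all $x \in B^{g \le f}$; hence $B^{g \le f} \subseteq \{x : \tilde f(x) = V(x)\} = D_1^*$ and $B^{g \le f} \subseteq \{x : \tilde g(x) = V(x)\} = D_2^*$, which says exactly that in equilibrium both players stop on $B^{g \le f}$. Finally, the inclusion ${B^{g\leq h \leq f}} \cup {B^{h<g<f}} \cup {B^{g<f<h}} \subseteq B^{g \le f}$ is one of the elementary set relations recorded in Section \ref{sec:assum}, so \eqref{asdasdDd} follows.

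The only genuinely delicate point is the one already flagged: in the abstract strong-Markov framework of \cite{ekstrom2008optimal} the value need not be continuous, and one must invoke the one-dimensional diffusion structure (regularity of all interior points, no thin points) to upgrade it; everything else — the ordering $\tilde f = \tilde h \le \tilde g$, continuity and integrability of $\tilde f, \tilde g, \tilde h$, quasi-left continuity of $X$, naturality of the boundaries, and the set-theoretic bookkeeping in part (ii) — is routine and already prepared by Section \ref{sec:assum} and Problem \ref{thm:erik-goran}.
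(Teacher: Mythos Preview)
Your proposal is correct and follows essentially the same approach as the paper: invoke \cite[Theorem 2.1]{ekstrom2008optimal} (the paper does this via the time-space process $(t,X_t)$ to absorb the discounting, a detail you omit but which is routine), use the diffusion structure for continuity of $V$, and read off $\tilde f\le V\le\tilde g$. For part (ii) you take a slightly more direct route than the paper: you deduce $V=\tilde f=\tilde g$ on $B^{g\le f}$ immediately from the sandwich $\tilde f\le V\le\tilde g$ together with $\tilde f=\tilde g$ there, whereas the paper instead verifies that immediate stopping by both players satisfies the equilibrium condition on $B^{g\le f}$ and then invokes uniqueness of the value; your argument is shorter and equally valid.
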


Let us now report a  characterization result for the value of the associated game. 
We remark that results similar to Proposition \ref{asso-game:characterization}(i) are, under slightly different conditions for the payoff functions, available in \cite[Theorem 3.1]{ekstrom2017dynkin} and \cite[Theorem 2.1]{peskir2009optimal}. An illustrative example is provided in Figure \ref{fig-assoc} (cf. Remark \ref{rem:eq_associated}).

\begin{figure}[ht]
	\begin{center}
  \begin{tikzpicture}
	\begin{axis}[
        xmin= -3, xmax= 3,
        ymin= -0.3, ymax= 2.2,
        xtick = {\empty}, 
        clip = false]
	\addplot[thick, dotted, domain=-3:-2] {-0.1*(x+3)^2+1.1};
    \addplot[thick, dotted, domain=-2:0] {(x+1)^2} node [pos=0.4, above] {$f$};
	\addplot[thick, dotted, domain=0:1.8] {(x-1)^2};
    \addplot[thick, dotted, domain=1.8:3] {x-1.16};
        \addplot[thick, dashed, domain=-3:0] {0.2*(x+3)+0.8} node [pos=0.6, above] {$g$};
	\addplot[thick, dashed, domain=0:2] {(x-1)^2+0.4};
	\addplot[thick, dashed, domain=2:2.58] {1.4};
        \addplot[thick, dashed, domain=2.58:3] {-0.5*sqrt(x-2.58)+1.4};
	\addplot[thick, dashdotted, domain=-3:-2] {-x*0.5};
    \addplot[thick, dashdotted, domain=-2:0.2] {-x*0.5} node [pos=0.4, above] {$h$};
    \addplot[thick, dashdotted, domain=0.2:2] {-0.1};
    \addplot[thick, dashdotted, domain=2:3] {-0.1};
\addplot[color = black, densely dotted, thick] coordinates {(-2, -0.3) (-2, -0.2)};
\addplot[color = black, densely dotted, thick] coordinates {(-0.5, -0.3) (-0.5, -0.2)};
\addplot[color = black, densely dotted, thick] coordinates {(2.58, -0.3) (2.58, -0.2)};
\node [below] at (50, 0) {${B_{4}}$};
\node [below] at (105, 0) {${B_{1}}$};
\node [below] at (180, 0) {${B_{2}}$};
\node [below] at (390, 0) {${B_{6}}$};
\node [below] at (580, 0) {${B_{3}}$};
 \end{axis} 
	\end{tikzpicture}
\begin{tikzpicture}
	\begin{axis}[
        xmin= -3, xmax= 3,
        ymin= -0.3, ymax= 2.2,
        xtick = {\empty}, 
        clip = false]
	\addplot[thick, dotted, domain=-3:-2] {-0.1*(x+3)^2+1.1};
    \addplot[thick, dotted, domain=-2:0] {(x+1)^2} node [pos=0.25,below] {$\tilde f = \tilde h$};
	\addplot[thick, dotted, domain=0:1.8] {(x-1)^2};
    \addplot[thick, dotted, domain=1.8:2.57] {x-1.16};
    \addplot[thick, dotted, domain=2.58:3] {-0.5*sqrt(x-2.58)+1.4};
         \addplot[thick, dashed, domain=-3:-2] {-0.1*(x+3)^2+1.1};
         \addplot[thick, dashed, domain=-2:0] {0.2*(x+3)+0.8} node [pos=0.4, above] {$\tilde g$};
	\addplot[thick, dashed, domain=0:2] {(x-1)^2+0.4};
	\addplot[thick, dashed, domain=2:2.58] {1.4};
        \addplot[thick, dashed, domain=2.58:3] {-0.5*sqrt(x-2.58)+1.4};
\addplot[color = black, densely dotted, thick] coordinates {(-2, -0.3) (-2, -0.2)};
\addplot[color = black, densely dotted, thick] coordinates {(-0.5, -0.3) (-0.5, -0.2)};
\addplot[color = black, densely dotted, thick] coordinates {(2.58, -0.3) (2.58, -0.2)};
\node [below] at (50, 0) {${B_{4}}$};
\node [below] at (105, 0) {${B_{1}}$};
\node [below] at (180, 0) {${B_{2}}$};
\node [below] at (390, 0) {${B_{6}}$};
\node [below] at (580, 0) {${B_{3}}$};
 \end{axis} 
	\end{tikzpicture}
 \begin{tikzpicture}
	\begin{axis}[
        xmin= -3, xmax= 3,
        ymin= -0.3, ymax= 2.2,
        xtick = {\empty}, 
        clip = false]
	\addplot[thick, dotted, domain=-3:-2] {-0.1*(x+3)^2+1.1};
    \addplot[thick, dotted, domain=-2:0] {(x+1)^2} node [pos=0.25,below] {$\tilde f = \tilde h$};
	\addplot[thick, dotted, domain=0:1.8] {(x-1)^2};
    \addplot[thick, dotted, domain=1.8:2.57] {x-1.16};
    \addplot[thick, dotted, domain=2.58:3] {-0.5*sqrt(x-2.58)+1.4};
         \addplot[thick, dashed, domain=-3:-2] {-0.1*(x+3)^2+1.1};
         \addplot[thick, dashed, domain=-2:0] {0.2*(x+3)+0.8} node [pos=0.4, above] {$\tilde g$};
	\addplot[thick, dashed, domain=0:2] {(x-1)^2+0.4};
        \addplot[thick, dashed, domain=2:2.58] {1.4};
	\addplot[thick, dashed, domain=2.58:3] {-0.5*sqrt(x-2.58)+1.4};
    \addplot[thick, solid, domain=-3:-2] {-0.1*(x+3)^2+1.1};
    \addplot[thick, solid, domain=-2:0] {1} node [pos=0.5, below] {$V$};
    \addplot[thick, solid, domain=0:0.6] {1-0.75*x};
    \addplot[thick, solid, domain=0.6:1.31] {(x-1)^2+0.4};
    \addplot[thick, solid, domain=1.31:2.58] {0.72*(x-2.58)+1.4};
    \addplot[thick, solid, domain=2.58:3] {-0.5*sqrt(x-2.58)+1.4};
\addplot[color = black, densely dotted, thick] coordinates {(-2, -0.3) (-2, -0.2)};
\addplot[color = black, densely dotted, thick] coordinates {(-0.5, -0.3) (-0.5, -0.2)};
\addplot[color = black, densely dotted, thick] coordinates {(2.58, -0.3) (2.58, -0.2)};
\node [below] at (50, 0) {${B_{4}}$};
\node [below] at (105, 0) {${B_{1}}$};
\node [below] at (180, 0) {${B_{2}}$};
\node [below] at (390, 0) {${B_{6}}$};
\node [below] at (580, 0) {${B_{3}}$};
 \end{axis} 
	\end{tikzpicture}
   \begin{tikzpicture}
	\begin{axis}[
        xmin= -3, xmax= 3,
        ymin= -0.3, ymax= 2.2,
        xtick = {\empty}, 
        clip = false]
	\addplot[thick, dotted, domain=-3:-2] {-0.1*(x+3)^2+1.1};
    \addplot[thick, dotted, domain=-2:0] {(x+1)^2} node [pos=0.4, above] {$f$};
	\addplot[thick, dotted, domain=0:1.8] {(x-1)^2};
    \addplot[thick, dotted, domain=1.8:3] {x-1.16};
        \addplot[thick, dashed, domain=-3:0] {0.2*(x+3)+0.8} node [pos=0.6, above] {$g$};
	\addplot[thick, dashed, domain=0:2] {(x-1)^2+0.4};
	\addplot[thick, dashed, domain=2:2.58] {1.4};
        \addplot[thick, dashed, domain=2.58:3] {-0.5*sqrt(x-2.58)+1.4};
    \addplot[thick, dashdotted, domain=-3:-2] {-x*0.5};
    \addplot[thick, dashdotted, domain=-2:0.2] {-x*0.5} node [pos=0.4, above] {$h$};
    \addplot[thick, dashdotted, domain=0.2:2] {-0.1};
    \addplot[thick, dashdotted, domain=2:3] {-0.1};
    \addplot[thick, solid, domain=-3:-2.02] {-0.1*(x+3)^2+1.1};
    \addplot[thick, solid, domain=-2:0] {1} node [pos=0.5, below] {$V$};
    \addplot[thick, solid, domain=0:0.6] {1-0.75*x};
    \addplot[thick, solid, domain=0.6:1.31] {(x-1)^2+0.4};
    \addplot[thick, solid, domain=1.31:2.58] {0.72*(x-2.58)+1.4};
    \addplot[thick, solid, domain=2.58:3]{-0.5*sqrt(x-2.58)+1.4};
\addplot[color = black, densely dotted, thick] coordinates {(-2, -0.3) (-2, -0.2)};
\addplot[color = black, densely dotted, thick] coordinates {(-0.5, -0.3) (-0.5, -0.2)};
\addplot[color = black, densely dotted, thick] coordinates {(2.58, -0.3) (2.58, -0.2)};
\node [below] at (50, 0) {${B_{5}}$};
\node [below] at (105, 0) {${B_{1}}$};
\node [below] at (180, 0) {${B_{2}}$};
\node [below] at (390, 0) {${B_{6}}$};
\node [below] at (580, 0) {${B_{3}}$};
 \end{axis} 
	\end{tikzpicture}
		\end{center}
			\caption{The first picture shows some payoff functions $f,g$ and $h$. The short vertical dotted lines correspond to points in $\partial B_i$ (the sets $B_i$ are defined in Section \ref{sec:assum}). The second picture shows the payoff functions $\tilde f, \tilde g$ and $\tilde h$ of the associated game. The third picture shows also the value function of the associated game $V$, assuming $X$ is a Wiener process (and $r>0$ is very small). The last picture shows the original payoff functions $f,g$ and $h$ together with the corresponding value function $V$ (for the original game), which, as we shall see, coincides with the value function of the associated game; see Theorem \ref{thm:construction} in Section \ref{sec:NE-exist}.} 
			\label{fig-assoc}
\end{figure}
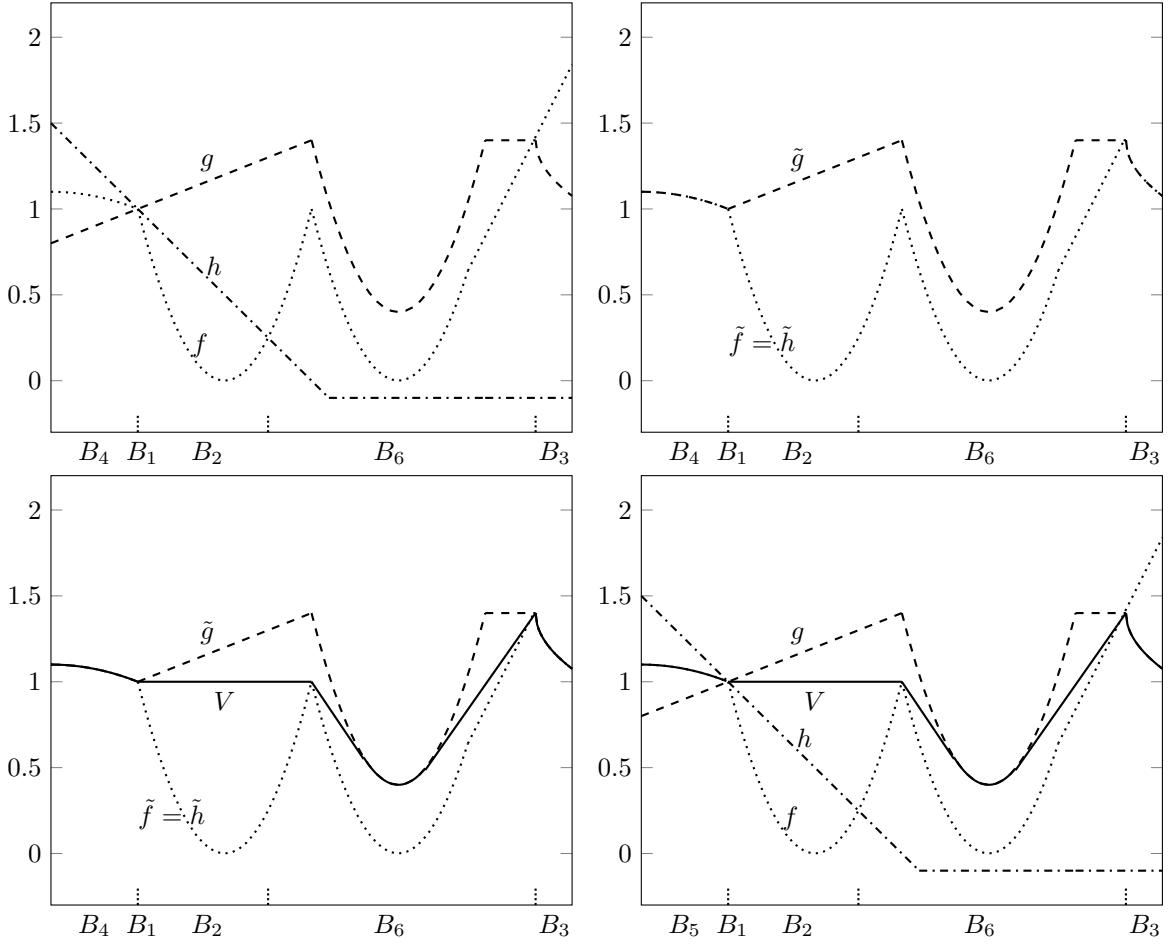

\begin{proposition}[Equilibrium characterization]\label{asso-game:characterization}\enskip 

\begin{enumerate}[(i)]
\item  Let $U:{\mathcal I}\rightarrow \mathbb{R}$ be a continuous function and define
\begin{align*}
\tau^{D_i}&:= \inf\{t \geq 0: X_t \in D_i\}, \enskip i=1,2,\\ 
D_1&:=\{x \in {\cal I}: \tilde f(x)= U(x)\},\\
D_2&:=\{x \in {\cal I}: \tilde g(x)= U(x)\}.
\end{align*}
Suppose that $e^{-r(t\wedge \tau^{D_1})}U(X_{t\wedge \tau^{D_1}}), 0 \leq t \leq \infty$ is a submartingale, that $e^{-r(t\wedge \tau^{D_2})}U(X_{t\wedge \tau^{D_2}}), 0 \leq t \leq \infty$ is a supermartingale, for each $X_0=x \in {\mathcal I}$ and that $\tilde f(x)\leq U(x)\leq\tilde g(x)$, for each $x \in {\mathcal I}$. Then, the equilibrium in Proposition \ref{thm:assoc-game} is given by $V=U$ and $D_i^*=D_i, \enskip i=1,2$. 

\item It holds that $\in  D_1^* \cap D_2^*=B^{g \leq f}$ and the value function of the associated game satisfies 
\begin{align*}
V(x)= 
\begin{cases}
    \tilde f(x), 				&  x \in  D_1^*\\
    \tilde g(x), 				&  x \in  D_2^*\\
   \tilde f(x) =\tilde g(x), 				&  x \in  D_1^* \cap D_2^*=B^{g \leq f},
		 \end{cases}
\end{align*}
as well as
\begin{align}
\begin{split}\label{123123-2}
V(x)= 
\begin{cases}
			h(x), &  x \in {B^{g\leq h \leq f}} \subseteq D_1^* \cap D_2^*= B^{g \leq f}\\
			g(x), &  x \in {B^{h<g<f}} \subseteq D_1^* \cap D_2^*=B^{g \leq f}\\
			f(x), &  x \in {B^{g<f<h}} \subseteq D_1^* \cap D_2^*=B^{g \leq f},
		 \end{cases}
\end{split}
\end{align}
and
\begin{align}
\begin{split}\label{123123-3}
V(x)= 
\begin{cases}
			f(x), &  x \in  D_1^*\cap B^{f \leq g}\\
			g(x), &  x \in  D_2^* \cap B^{f \leq g}\\
            f(x)=g(x), &  x \in  D_1^*\cap D_2^* \cap B^{f \leq g}= B^{f = g}.
		 \end{cases}
\end{split}
\end{align}
Moreover, $f(x)\leq V(x)\leq g(x)$ or $g(x)\leq V(x)\leq f(x)$, for each $x\in {\cal I}$.
\end{enumerate}
\end{proposition}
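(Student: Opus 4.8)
\textbf{Proof plan for Proposition \ref{asso-game:characterization}.}

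The plan is to deduce everything from Proposition \ref{thm:assoc-game} together with a standard verification (uniqueness) argument for the sub-/supermartingale characterization. For part (i): given a candidate $U$ satisfying the stated sub-/supermartingale properties and the sandwich $\tilde f \leq U \leq \tilde g$, I would show $U$ coincides with the value $V$ of the associated game and that $D_i = D_i^*$. The key step is the usual optional-stopping argument. Fix $x$ and an admissible $\tau_1$; since $e^{-r(t\wedge\tau^{D_2})}U(X_{t\wedge\tau^{D_2}})$ is a supermartingale and $U \leq \tilde g$, while on $D_1$ we have $U = \tilde f = \tilde h$, one estimates $J(x;\tau_1,\tau^{D_2}) \leq U(x)$ by splitting on $\{\tau_1 < \tau^{D_2}\}$, $\{\tau_1 > \tau^{D_2}\}$, $\{\tau_1 = \tau^{D_2}\}$, using $\tilde f \leq U$ on the first event and $U = \tilde g$ on $D_2$ on the others (taking care of the $\{\tau^{D_2}=\infty\}$ case via Assumption \ref{assum:1}(iv) and dominated convergence, with the integrable envelope $M(x)$). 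Symmetrically $J(x;\tau^{D_1},\tau_2) \geq U(x)$ using the submartingale property and $U = \tilde g = \tilde h$ on $D_2$. Hence $(\tau^{D_1},\tau^{D_2})$ is a Nash equilibrium with value $U$, and since the equilibrium value of Proposition \ref{thm:assoc-game} is the unique game value (as recalled in Section \ref{sec:intro}), $U = V$; consequently $D_1 = \{\tilde f = U\} = \{\tilde f = V\} = D_1^*$ and likewise $D_2 = D_2^*$.

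For part (ii), I would read off the structural identities from Proposition \ref{thm:assoc-game}(i)--(ii) and the definition \eqref{assoc-game-def-fgh} of the associated game. First, $D_1^* \cap D_2^* = \{\tilde f = V\} \cap \{\tilde g = V\} = \{\tilde f = \tilde g = V\}$; on $B^{f\leq g}$ we have $\tilde f = f$, $\tilde g = g$, so $\tilde f = \tilde g$ forces $f = g$, i.e.\ this part of the intersection is $B^{f=g}$; on $B^{g\leq f}$ we have $\tilde f = \tilde g = \tilde h$ identically, and by Proposition \ref{thm:assoc-game}(ii) $V = \tilde f = \tilde g$ there, so all of $B^{g\leq f}$ lies in $D_1^*\cap D_2^*$. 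Combining, $D_1^*\cap D_2^* = B^{g\leq f} \cup B^{f=g} = B^{g\leq f}$ (note $B^{f=g}\subseteq B^{g\leq f}$), giving the claimed equality. The case distinction for $V$ on $D_1^*$, $D_2^*$ and their intersection is then immediate from the definitions of $D_1^*, D_2^*$. The refined formula \eqref{123123-2} follows by restricting to the three subsets $B^{g\leq h\leq f}$, $B^{h<g<f}$, $B^{g<f<h}$ of $B^{g\leq f}$ and substituting, respectively, $\tilde h = h$, $\tilde g = g$, $\tilde f = f$ from \eqref{assoc-game-def-fgh} together with $V = \tilde f = \tilde g = \tilde h$ there (using $\tilde f = \tilde h$). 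Formula \eqref{123123-3} follows by restricting the first display to $B^{f\leq g}$, where $\tilde f = f$ and $\tilde g = g$.

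Finally, for the last sentence, the sandwich $\tilde f \leq V \leq \tilde g$ from Proposition \ref{thm:assoc-game}(i) translates into a statement about $f,g$: on $B^{f\leq g}$ it reads $f \leq V \leq g$ directly; on $B^{g\leq f}$ we showed $V = \tilde f = \tilde g$, and by the definition of $\tilde f$ on the three subsets of $B^{g\leq f}$ one checks $\tilde f$ is always sandwiched between $f$ and $g$ (e.g.\ on $B^{g\leq h\leq f}$, $V = h \in [g,f]$; on $B^{h<g<f}$, $V = g$; on $B^{g<f<h}$, $V = f$; in each case $g \leq V \leq f$), so $g \leq V \leq f$ there. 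I expect the main obstacle to be the careful bookkeeping in the optional-stopping estimate of part (i) — in particular handling the three indicator events in \eqref{cost-function} simultaneously with the one-sided inequalities $\tilde f \leq U \leq \tilde g$, and justifying the passage to the limit on $\{\tau^{D_i} = \infty\}$ using Assumption \ref{assum:1}(iv) and the integrability bound $M(x) < \infty$; the rest is essentially unwinding definitions.
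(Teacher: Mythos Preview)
Your proposal is correct and follows essentially the same approach as the paper: an optional-sampling verification argument for part (i), and unwinding Proposition \ref{thm:assoc-game} together with \eqref{assoc-game-def-fgh} and the definitions of the sets $B_i$ for part (ii). One small correction: in the symmetric (submartingale) direction you write ``$U = \tilde g = \tilde h$ on $D_2$,'' but by definition $\tilde h = \tilde f$ everywhere; what you actually need there is $U = \tilde f = \tilde h$ on $D_1$ (where $X$ sits at time $\tau^{D_1}$), which is what gives the required bound on the simultaneous-stopping event.
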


\begin{remark}[Finding the equilibrium in the associated game] \label{rem:eq_associated}
The problem of determining the value function $V$ under the standard ordering condition $\tilde f \leq \tilde h \leq \tilde g$ has been widely studied in the literature. A particularly intuitive geometric description arises from the semiharmonic characterization: for an underlying Wiener process, the value function can be visualized as the result of "pulling a rope" between the "two obstacles" $\tilde f$ and $\tilde g$; see the lower left picture in Figure \ref{fig-assoc} for an illustration. For more general diffusion processes, the same principle applies after an appropriate transformation (for a detailed discussion see e.g., \cite{peskir2009optimal}).

Let us furthermore, in  analytical terms and under appropriate smoothness assumptions, describe a method for finding a candidate for the equilibrium value for  the associated game $V$. The method could be described in terms of the payoff functions for the associated game i.e., $\tilde f, \tilde g$ and $\tilde h$, but we will instead provide a description in terms of $f,g$ and $h$; where we rely on the relationships $\tilde f = \tilde h = f$ and $g = \tilde g$ on $B^{f\leq g}$, and $\tilde f = \tilde g = \tilde h$ on $B^{g \leq f}$ (see \eqref{assoc-game-def-fgh}):

Try to identify a sufficiently smooth function $U$ with $f\leq U\leq g$ on $B^{f<g}$, satisfying the system
\begin{align*}
\max\{(\mathbb{L}_X-r)U(x);f(x)-U(x)\}&=0, \mbox{ for $x \in B^{f< g}$ with } U(x)<g(x),\\
\min\{(\mathbb{L}_X-r)U(x);g(x)-U(x)\}&=0, \mbox{ for $x \in B^{f<g}$ with } U(x)>f(x), 
\end{align*}
with $U=f=g$ on $B^{f=g}$, which in many cases involves smooth fit conditions, e.g., $U'(x)=f'(x)$ for $x$ with $f(x)=U(x)$ and $U(x)<g(x)$. The candidate equilibrium value is then 
\begin{align*}
V(x)= 
\begin{cases}
 \tilde f(x) = \tilde g(x) = \tilde h(x) = h(x), &  x \in {B^{g\leq h \leq f}} \\
 \tilde f(x) = \tilde g(x) = \tilde h(x) = g(x), &  x \in {B^{h<g<f}}\\
 \tilde f(x) = \tilde g(x) = \tilde h(x) = f(x), &  x \in {B^{g<f<h}}\\
\tilde f(x) = \tilde g(x) = \tilde h(x) = f(x)=g(x), &  x \in {B^{f=g}}\\
 U(x), &  x \in B^{f< g}={B^{f \lesssim h \lesssim g}} \cup  {B^{f< g<h}} \cup {B^{h<f< g}}.
\end{cases}
\end{align*}
All lines except the last above corresponds to both players stopping if the payoff functions of the associated game coincide, combined with \eqref{assoc-game-def-fgh}. One verifies that the candidate $V$ is indeed an equilibrium value by verifying that
$e^{-r(t\wedge \tau^{D_1^{*}})}V(X_{t\wedge \tau^{D_1^{*}}}), 0 \leq t \leq \infty$ is a submartingale and that 
$e^{-r(t\wedge \tau^{D_2^{*}})}V(X_{t\wedge \tau^{D_2^{*}}}), 0 \leq t \leq \infty$ is a supermartingale, for each $X_0=x \in {\mathcal I}$; cf. Proposition \ref{asso-game:characterization}.

We remark that the equilibrium value $V$ is generally not differentiable even when all payoff functions are smooth (cf. Example \ref{exampleyiweh}, below). 
\end{remark}

\section{General \texorpdfstring{$\epsilon$}{e}-equilibrium existence, construction and characterization}\label{sec:NE-exist} 
We are now ready to report our first main result,  Theorem \ref{thm:construction}, which contains the general equilibrium existence and characterization results for the original game. We remark that a version of this result with more explicitly stated equilibrium stopping times, which holds under additional model assumptions, is reported in Theorem \ref{simplified-construction} in Section \ref{sec:howtofind}. An equilibrium uniqueness result is reported in the present section in Corollary \ref{Equilibrium-uniqueness}. 
Proofs for this section are found in Appendix  \ref{proof-app-C}.

Before reporting Theorem \ref{thm:construction} we present, in order to increase readability, its interpretation:   
\enskip 
\begin{itemize}

\item Recall that $D^*_i=1,2$ are the stopping sets of the associated game (Proposition \ref{thm:assoc-game}).

\item The interpretation of the equilibrium stopping times of Theorem \ref{thm:construction} (below) for the original game is that Player $1$ (the maximizer) stops without randomization whenever the state process $X$ is in the set $D_1^{*} \backslash ({B^{h<g<f}} \cup {B^{h<f\leq g}})$ and stops with 
randomization on the set $D_1^{*} \cap ({B^{h<g<f}} \cup {B^{h<f\leq g}})$, on which simultaneous stopping gives $h< f \wedge g$, using a combination of a Lebesgue density stopping rate and \textit{local time pushes} in the accumulated stopping rate; see \eqref{Ne-stretegy2} below. Local time pushes are in particular used on isolated points of $D_1^{*} \backslash ({B^{h<g<f}} \cup {B^{h<f\leq g}})$. Analogously, Player $2$ stops without randomization on 
$D_2^{*} \backslash ({B^{g<f<h}} \cup {B^{f\leq g<h}})$ and stops with 
randomization on $D_1^{*} \cap ({B^{g<f<h}} \cup {B^{f\leq g<h}})$. 
In particular, recalling \eqref{def-D1-D2-sets-tilde}, \eqref{123123-2}, \eqref{123123-3} and $ {B^{f \lesssim h \lesssim g}} \cup {B^{f\leq g<h}} \cup {B^{h<f\leq g}}\subseteq B^{f \leq g}$, we have: 
\begin{itemize}
\item On ${B^{g\leq h \leq f}} \subseteq D^*_i,i=1,2$, both players stop immediately. 

\item On ${B^{h<g<f}} \subseteq D^*_i,i=1,2$, Player $2$ stops immediately while Player $1$ randomizes. The interpretation for ${B^{g<f<h}} \subseteq D^*_i,i=1,2$ is analogous to that of ${B^{h<g<f}}$, with Player $2$ randomizing and Player $1$ stopping.

\item On ${B^{f \lesssim h \lesssim g}}$ both players stop without randomization according $D_i^*$, i.e., Player $i$ stops immediately on ${B^{f \lesssim h \lesssim g}} \cap D_i^{*}$ and does nothing on ${B^{f \lesssim h \lesssim g}} \backslash D_i^{*}$.

\item On ${B^{f\leq g<h}}$, Player $1$ stops without randomization according $D_1^*$, i.e., Player $1$ stops immediately on ${B^{f\leq g<h}} \cap D_1^{*}$ and does nothing on ${B^{f\leq g<h}} \backslash D_1^{*}$. Moreover, on ${B^{f\leq g<h}}$, Player $2$ randomizes whenever $D_2^*$ prescribes stopping, i.e., Player $2$ randomizes on ${B^{f\leq g<h}} \cap D_2^{*}$ and does nothing on ${B^{f\leq g<h}} \backslash D_2^{*}$. The interpretation for ${B^{h<f\leq g}}$ is analogous to that of ${B^{f\leq g<h}}$, with Player $1$ randomizing.

\end{itemize} 
\item It never happens that both players randomize at the same time. 
\end{itemize}

\begin{theorem}[General equilibrium existence, construction and characterization] \label{thm:construction} \enskip

\begin{enumerate}[(i)]
\item The equilibrium value of our game (Definition \ref{def:equilibrium-defs}) exists and coincides with the equilibrium value $V$ of the associated game (Proposition \ref{thm:assoc-game}). In particular, Proposition \ref{asso-game:characterization} provides a characterization of the equilibrium value and Figure \ref{fig-assoc} provides an illustrative example.

\item  A global Markovian randomized $\epsilon$-Nash equilibrium exists for each $\epsilon>0$, and the equilibrium value $V$ can be attained using Markovian randomized stopping times. 
In particular, for each fixed $\epsilon>0$, the pair $(\tau_{\Psi^{(1)}},\tau_{\Psi^{(2)}})$ is a global Markovian randomized $\epsilon$-Nash equilibrium when
\begin{align}\label{Ne-stretegy}
\begin{split}
\Psi^{(1)}_t &:= A_t^{(1)}+\infty \mathbb{I}\{\tau^{D_1^{*} \backslash ({B^{h<g<f}} \cup {B^{h<f\leq g}})} \leq t \},\\
\Psi^{(2)}_t &:= A_t^{(2)}+\infty \mathbb{I}\{\tau^{D_2^{*} \backslash ({B^{g<f<h}} \cup {B^{f\leq g<h}})} \leq t \}, 
\end{split}
\end{align}
with 
\begin{align}\label{Ne-stretegy2}
\begin{split}
A_t^{(1)}:&= \int_0^t \gamma^{(1)}_\epsilon(X_s)  \mathbb{I}{\{X_s \in  D_1^{*} \cap (B^{h<g<f} \cup {B^{h<f\leq g}})  \}}ds + \sum_{y \in \mbox{iso}(D_1^{*} \cap (B^{h<g<f} \cup {B^{h<f\leq g}}) )} \Gamma_\epsilon^{1}(y) l_t^y,\\
A_t^{(2)}:&= \int_0^t \gamma^{(2)}_\epsilon(X_s)  \mathbb{I}{\{X_s \in  D_2^{*} \cap 
({B^{g<f<h}} \cup {B^{f\leq g<h}}) \}}ds + \sum_{y \in \mbox{iso}(D_2^{*} \cap ({B^{g<f<h}} \cup {B^{f\leq g<h}}))} \Gamma_\epsilon^{2}(y) l_t^y,
\end{split}
\end{align}
where $\gamma^{(i)}_\epsilon(\cdot)$ and $\Gamma^{(i)}_\epsilon(\cdot)$ are real-valued functions $i=1,2$ (specified in Constructions \ref{construction1}--\ref{construction2}, Appendix \ref{appendix-construction}).
\end{enumerate}
\end{theorem}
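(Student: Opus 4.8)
The plan is to transfer the equilibrium structure of the associated game (Proposition~\ref{thm:assoc-game}) to the original game by carefully analysing the reward \eqref{cost-function} when one player deviates from the proposed pair $(\tau_{\Psi^{(1)}},\tau_{\Psi^{(2)}})$. The key algebraic observation is that the difference between the original payoff and the associated-game payoff, at any fixed stopping configuration, is supported on the set $B^{g\leq f}=D_1^*\cap D_2^*$ together with the "local-time push" points, and on these sets the randomized components $A^{(i)}$ are designed precisely so that the effective payoff delivered to a deviating opponent equals (up to $\epsilon$) the associated-game payoff. Concretely, for part~(i) I would first show $V^*\leq$ (associated value) $\leq V_*$: the upper bound comes from Player~2 using $\tilde\tau_2^*$ (an admissible stopping time in the original game, since $\mathbb{T}_i$ is not restricted to Markovian times) and invoking that on $\{\tilde\tau_2^*<\tilde\tau_1^*\}$ we are in $D_2^*$ where $V=\tilde g\geq g$-behaviour is controlled by Proposition~\ref{asso-game:characterization}(ii), plus the submartingale/supermartingale properties of $e^{-rt}V(X_t)$ stopped at $D_i^*$; the lower bound is symmetric. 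Since the value of the original game exists for each fixed $x$ by \cite{laraki2005value}, and both one-sided bounds equal the associated value, we conclude $V^*=V_*=$ (associated value) as functions on $\mathcal I$.

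For part~(ii), fix $\epsilon>0$ and the pair in \eqref{Ne-stretegy}--\eqref{Ne-stretegy2}. I would verify the two inequalities in \eqref{def:SE} separately. Consider Player~1 deviating to an arbitrary $\tau_1\in\mathbb{T}_1$ against $\tau_2^*=\tau_{\Psi^{(2)}}$. Using \eqref{2q4124} applied to $V$ (legitimate by Assumptions~\ref{assum:1}--\ref{assum:2}), write $e^{-r(t\wedge\tau_2^*)}V(X_{t\wedge\tau_2^*})$ as $V(x)$ plus a martingale plus the $dC^\pm(V)$ terms; the submartingale property of $e^{-r(t\wedge\tau^{D_1^*})}V(X_{t\wedge\tau^{D_1^*})}$ means the net drift is $\leq 0$ only off $D_1^*$, so Player~1 gains nothing by waiting outside $D_1^*$, and on $D_1^*$ we have $V=\tilde f$. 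The subtle point is the region $D_1^*\cap(B^{h<g<f}\cup B^{h<f\leq g})$ where the prescribed strategy randomizes rather than stops: there, if Player~2 has positive stopping rate $A^{(2)}$ (which happens exactly on $D_2^*\cap(B^{g<f<h}\cup B^{f\leq g<h})$, disjoint from Player~1's randomization region, so actually only pure stopping by Player~2 is relevant here) — one shows the expected payoff to Player~1 from the randomized block, computed by conditioning on the $Exp(1)$ clock $E_1$ and using the occupation-time representation \eqref{asdar3q432tgad} of $A^{(1)}$, lies within $\epsilon$ of $V$ once $\gamma^{(1)}_\epsilon$ and $\Gamma^1_\epsilon$ are chosen large enough (made precise in Constructions~\ref{construction1}--\ref{construction2}). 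The local-time pushes at $\mathrm{iso}(\cdots)$ handle isolated stopping points where a Lebesgue-density rate would be "invisible". The opponent-deviation inequality for Player~2 is symmetric.

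The main obstacle is the randomized-region estimate: one must show that, uniformly over \emph{all} admissible (not necessarily Markovian) deviations $\tau_1$ and over \emph{all} starting points $x\in\mathcal I$, the error incurred by replacing "stop immediately" with "stop at rate $\gamma^{(1)}_\epsilon$" is at most $\epsilon$. This requires (a) a quantitative bound on how long the diffusion spends in the randomization region before the exponential clock fires, controlled by the continuity of $f,g,h$ and the strict inequality $h<f\wedge g$ there (which gives a uniform gap), together with (b) an argument that a deviator cannot exploit the non-absolute-continuity of the accumulated rate at the push points — this is where the separatedness Assumptions~\ref{assum:1}\eqref{assum:A2} and~\ref{assum:2} and the natural-boundary Assumption~\ref{assum:1}\eqref{assum:A3} enter, ensuring the push points do not accumulate and the martingale terms in \eqref{2q4124} are genuine martingales (not merely local ones) by the integrability bound \eqref{assum:integrability}. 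Assembling (a) and (b) into the two-sided bound \eqref{def:SE} simultaneously for every $x$ — the defining feature of a \emph{global} $\epsilon$-equilibrium, as stressed in the introduction — is the technical heart, and is precisely what is deferred to Appendix~\ref{appendix-construction} and Appendix~\ref{proof-app-C}.
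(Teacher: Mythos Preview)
Your approach to part~(i) has a genuine gap. You propose that Player~2 use the pure stopping time $\tilde\tau_2^*$ from the associated game to obtain the upper bound $V^*(x)\leq V(x)$. But $\tilde\tau_2^*$ stops immediately on $D_2^*\supseteq B^{g<f<h}$, and against this Player~1 can also stop immediately there, collecting $h(x)>f(x)=V(x)$ (by Proposition~\ref{asso-game:characterization}(ii)). Hence $\sup_{\tau_1}J(x;\tau_1,\tilde\tau_2^*)\geq h(x)>V(x)$ on $B^{g<f<h}$, and the bound fails. This is precisely why randomization is needed: the whole point of $A^{(2)}$ is to prevent Player~1 from matching Player~2's stopping time and grabbing $h$. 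The paper does not attempt part~(i) independently; it proves part~(ii) first and then obtains part~(i) by letting $\epsilon\searrow0$ in the $\epsilon$-Nash inequalities.

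For part~(ii), your outline also mixes up whose randomization is at stake. When bounding $J(x;\tau_1,\tau_2^*)$ from above for arbitrary $\tau_1$, it is Player~2's randomization on $D_2^*\cap(B^{g<f<h}\cup B^{f\leq g<h})$ that does the work (it makes simultaneous stopping a null event and delivers approximately $g$ rather than $h$); Player~1's randomization region $D_1^*\cap(B^{h<g<f}\cup B^{h<f\leq g})$ is relevant only for the companion lower bound on $J(x;\tau_1^*,\tau_2^*)$ and, by symmetry, for Player~2's deviation inequality. Your ``uniform gap $h<f\wedge g$'' is also on the wrong region. The paper does not apply It\^o--Tanaka to $V$ at all; instead it runs a direct case-by-case analysis over the six regions $B_i$, using the strong Markov property, the integrability bound $M(x)$ in \eqref{assum:integrability}, and the explicit probability estimates \eqref{asfqer32r4}--\eqref{qe21q3edaDD} built into Constructions~\ref{construction1}--\ref{construction2}. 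The separatedness assumptions enter via Lemma~\ref{lemma:yiuawehjlqw} to decompose the randomization sets into intervals on which a single constant rate suffices, not via a global uniform gap.
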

The following result demonstrates that increasing the level of randomization in the equilibrium stopping times does not affect the existence of an equilibrium (provided it does not result in immediate stopping) and that the equilibrium value function is uniquely determined.

\begin{corollary}[Equilibrium uniqueness] \label{Equilibrium-uniqueness}\enskip 

\begin{enumerate}[(i)]
\item The equilibrium stopping time pair of Theorem \ref{thm:construction} is not unique. 
In particular, suppose $\Psi^{(i)}_t,i=1,2$ in \eqref{Ne-stretegy} correspond to a global Markovian randomized $\epsilon$-Nash equilibrium for some $\epsilon \geq 0$. 
Now 
replace $\gamma^{(i)}_\epsilon(\cdot)$ in \eqref{Ne-stretegy2} with a real-valued, measurable and locally bounded function that dominates $\gamma^{(i)}_\epsilon(\cdot)$, and 
replace $\Gamma^{(i)}_\epsilon(\cdot)$ in \eqref{Ne-stretegy2} with a real-valued function that dominates $\Gamma^{(i)}_\epsilon(\cdot)$. Then \eqref{Ne-stretegy} still corresponds to a global Markovian randomized $\epsilon$-Nash equilibrium for the same $\epsilon$ (although it may also correspond to a global Markovian randomized $\epsilon$-Nash equilibrium for a smaller value for $\epsilon\geq 0$). 

\item The equilibrium value of the game is (uniquely) given by $V$ in Theorem \ref{thm:construction} (and Proposition \ref{asso-game:characterization}).  

\end{enumerate}
\end{corollary}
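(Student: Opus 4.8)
The plan is to prove Corollary \ref{Equilibrium-uniqueness} by leveraging Theorem \ref{thm:construction} together with the structural facts about the associated game recorded in Propositions \ref{thm:assoc-game} and \ref{asso-game:characterization}.

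\medskip

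\noindent\textbf{Part (ii): uniqueness of the equilibrium value.}
First I would recall that if a global Markovian randomized $\epsilon$-Nash equilibrium $(\tau_1^*,\tau_2^*)$ exists for every $\epsilon>0$, then, by the discussion in Section \ref{sec:intro} (and \cite[Remark 5.2]{christensen2023markovian}), the game has a value $V=V^*=V_*$ and moreover $J(x;\tau_1^*,\tau_2^*)\to V(x)$ as $\epsilon\downarrow 0$ in an appropriate sense; more directly, Theorem \ref{thm:construction}(i) already asserts that the equilibrium value of the original game \emph{exists} and \emph{equals} the value $V$ of the associated game, which in turn is characterized uniquely by Proposition \ref{asso-game:characterization}. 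So the content of (ii) is really just that $V^*$ and $V_*$ coincide and equal this specific function; this is exactly Theorem \ref{thm:construction}(i), and there is nothing further to prove — I would state it as a one-line consequence, observing that the value of a zero-sum game, when it exists, is by definition unique (any two functions satisfying $V=V^*=V_*$ are equal), so in particular no choice of admissible randomized equilibrium stopping times can produce a different value.

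\medskip

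\noindent\textbf{Part (i): non-uniqueness via domination of the stopping rates.}
Here the key observation is monotonicity of the randomized stopping mechanism in the accumulated intensity. Suppose $(\tau_{\Psi^{(1)}},\tau_{\Psi^{(2)}})$ with the $\Psi^{(i)}$ as in \eqref{Ne-stretegy}--\eqref{Ne-stretegy2} is a global Markovian randomized $\epsilon$-Nash equilibrium. I would replace $\gamma^{(i)}_\epsilon$ by $\hat\gamma^{(i)}\geq\gamma^{(i)}_\epsilon$ (measurable, locally bounded) and $\Gamma^{(i)}_\epsilon$ by $\hat\Gamma^{(i)}\geq\Gamma^{(i)}_\epsilon$, producing $\hat A^{(i)}_t\geq A^{(i)}_t$ for all $t$ a.s., hence $\hat\Psi^{(i)}_t\geq\Psi^{(i)}_t$ and thus $\tau_{\hat\Psi^{(i)}}\leq\tau_{\Psi^{(i)}}$ a.s.\ (using $\tau_{\Psi^{(i)}}=\inf\{t:\Psi^{(i)}_t\geq E_i\}$ with the \emph{same} $E_i$). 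The crucial point is that this increased randomization changes nothing on the sets where it matters: on $D_i^*\cap(B^{h<g<f}\cup B^{h<f\leq g})$ (for $i=1$) the payoffs satisfy $h<f\wedge g$ and $V=\tilde f=\tilde g$, so whether Player~$1$ stops ``a bit more eagerly'' leaves the realized payoff profile, conditional on the location of stopping, unchanged in the relevant sense — one needs to argue that the value $V$ is still attained up to $\epsilon$. I would do this by re-examining the equilibrium verification in the proof of Theorem \ref{thm:construction}: the inequalities \eqref{def:SE} are established there via super-/submartingale arguments (Proposition \ref{asso-game:characterization}) combined with estimates controlling the error incurred by the randomized pushes; increasing the intensities only tightens the relevant approximations (or leaves them unchanged), so \eqref{def:SE} continues to hold with the same $\epsilon$. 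The one caveat to spell out is that the new intensities must not collapse the randomized stopping into immediate stopping, which is guaranteed since $\hat\gamma^{(i)},\hat\Gamma^{(i)}$ are finite-valued (real-valued) functions, so $\hat A^{(i)}$ remains a genuine continuous additive functional and $\tau_{\hat\Psi^{(i)}}$ is still a Markovian randomized (not pure) stopping time off the ``$\infty$'' part.

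\medskip

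\noindent\textbf{Main obstacle.}
The technical heart is part (i): showing that the $\epsilon$-equilibrium \emph{property} is preserved under domination of the rates. This requires going back into the estimates underlying Theorem \ref{thm:construction}'s proof (in Appendix \ref{proof-app-C}) and checking that the bound controlling the deviation $|J(x;\tau_{\hat\Psi^{(1)}},\tau_{\hat\Psi^{(2)}})-V(x)|$ — and the two one-sided deviations when one player deviates to an arbitrary $\tau_i\in\mathbb{T}_i$ — is monotone (or insensitive) in the stopping intensities on the randomization regions. I expect this to follow because the randomization there is designed precisely so that the \emph{continuation value equals the stopping value up to} $\epsilon$ on those sets, and faster stopping only moves the process more quickly onto sets where $V$ agrees with the relevant payoff; but making this rigorous is where the real work lies. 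Parts (ii) and the "smaller $\epsilon$" parenthetical are then immediate: monotone rates can only make the approximation sharper, and the value is unique by definition of a game value.
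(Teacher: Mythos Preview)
Your proposal is correct and follows essentially the same approach as the paper: for part (ii) the paper simply invokes the general zero-sum fact that the value, when it exists, is unique (cf.\ Section \ref{sec:intro}), and for part (i) the paper says that ``the first result can be proved by following the same arguments as those in the proof of Theorem \ref{thm:construction}'' --- which is exactly your plan of re-running the verification with the dominating intensities, relying on the monotonicity you identify (larger $\gamma^{(i)},\Gamma^{(i)}$ only decrease the probabilities $\mathbb{P}_x(\tau>\cdot)$ appearing in the Construction \ref{construction1} estimates \eqref{asfqer32r4} and \eqref{qe21q3edaDD}, so those inequalities are preserved). Your write-up is in fact more explicit than the paper's two-line proof.
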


\section{How to find equilibria, and examples}\label{sec:howtofind}

The construction of the equilibrium stopping intensities \eqref{Ne-stretegy}-\eqref{Ne-stretegy2} is technical and lengthy; see Appendix \ref{appendix-construction}.  However, under additional assumptions the construction simplifies significantly, and additionally a randomized Nash equilibrium exists. These findings are reported in our second main result, which is Theorem \ref{simplified-construction} below. The proof is found Appendix in \ref{proof-app-D}.

\begin{theorem}[Nash equilibrium existence ($\epsilon=0$)  and explicit equilibrium construction] \label{simplified-construction} 
Suppose ${B^{f\leq g<h}}\cup {B^{h<f\leq g}} = \emptyset$, i.e., if $f(x)\leq g(x)$ then $f(x) \leq h(x)\leq g(x)$ for all $x \in {\cal I}$. Then, the equilibrium $(\tau_{\Psi^{(1)}},\tau_{\Psi^{(2)}})$ of Theorem \ref{thm:construction} can be attained with $\epsilon=0$, i.e., a global Markovian randomized  Nash equilibrium exists. More precisely, the global Markovian randomized Nash equilibrium can be attained with explicit randomization according to \eqref{Ne-stretegy} and 
\begin{align}
A_t^{(1)}:=& \int_0^t \frac{1}{f(X_s)-g(X_s)}\mathbb{I}_{\{X_s\in {B^{h<g<f}}\}}dC_s^{-}(g)\label{simplified-construction:eq1}\\
A_t^{(2)}:=& \int_0^t \frac{1}{f(X_s)-g(X_s)}\mathbb{I}_{\{X_s\in {B^{g<f<h}}\}}dC_s^{+}(f)\label{simplified-construction:eq2}, 
\end{align}
where $C_t^{+}(f), t \geq 0$ is defined in \eqref{Cfplusminus1} and $C_t^{-}(g), t \geq 0$ is defined as in \eqref{Cfplusminus2} with $g$ instead of $f$ (note that both are non-decreasing processes). 

Furthermore, suppose that $f$ and $g$ are twice continuously differentiable except on countable sets of separated points, denoted by $x_i \in {\cal I}, i \in I$ and $x_j \in {\cal I}, j \in J$ (where $I$ and $J$ are index sets), respectively. Then \eqref{simplified-construction:eq1}--\eqref{simplified-construction:eq2} can be written as
\begin{align*}
A_t^{(1)}= \int_0^t\left(\frac{(\mathbb{L}_X-r)g(X_s)}{g(X_s)-f(X_s)}\right)_+\mathbb{I}_{\{X_s\in {B^{h<g<f}}, X_s \neq x_j, \forall j\}}ds + \frac{1}{2}\sum_{j}  \mathbb{I}_{\{x_j \in {B^{h<g<f}}\}}\left(\frac{g'(x_j+)-g'(x_j-)}{{g(x_j)-f(x_j)}}\right)_+l_t^{x_j}\\
A_t^{(2)}= \int_0^t\left(\frac{(\mathbb{L}_X-r)f(X_s)}{f(X_s)-g(X_s)}\right)_+\mathbb{I}_{\{X_s\in {B^{g<f<h}}, X_s \neq x_i,\forall i\}}ds + \frac{1}{2} \sum_{i}  \mathbb{I}_{\{x_i \in {B^{g<f<h}}\}}\left(\frac{f'(x_i+)-f'(x_i-)}{{f(x_i)-g(x_i)}}\right)_+l_t^{x_i}.
\end{align*}

 \end{theorem}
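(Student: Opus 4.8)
The plan is to verify directly that the pair $(\tau_{\Psi^{(1)}},\tau_{\Psi^{(2)}})$ defined through \eqref{Ne-stretegy} and \eqref{simplified-construction:eq1}--\eqref{simplified-construction:eq2} satisfies \eqref{def:SE} with $\epsilon=0$. The key observation underlying the choice of intensities is the following: on the region ${B^{h<g<f}}\subseteq D_1^*\cap D_2^*$ (where, by Proposition \ref{thm:assoc-game}(ii), Player $2$ stops immediately and $V=g$ there), Player $1$ must randomize at exactly the rate that makes the process $e^{-rt}V(X_t)$ stopped appropriately into a martingale for the \emph{minimizer's} optimality inequality, while the maximizer remains indifferent. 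Concretely, on $\{\tau_1>\tau_2\}$ the payoff is $g(X_{\tau_2})$, and since Player $2$ stops immediately on ${B^{h<g<f}}$, the running contribution of Player $1$'s randomization there competes against $f$ versus $g$; equating $f\cdot(\text{rate}) - (\text{rate})\cdot g$ with the drift $dC^-(g)$ of the $g$-supermartingale part forces the density $\tfrac{1}{f-g}\,dC^-(g)$, which is exactly \eqref{simplified-construction:eq1}. So the first step is to make this heuristic rigorous: assuming ${B^{f\leq g<h}}\cup {B^{h<f\leq g}}=\emptyset$, the sets $D_1^*\cap({B^{h<g<f}}\cup{B^{h<f\leq g}})$ and $D_2^*\cap({B^{g<f<h}}\cup{B^{f\leq g<h}})$ reduce to $D_1^*\cap{B^{h<g<f}}$ and $D_2^*\cap{B^{g<f<h}}$, which by \eqref{asdasdDd} equal ${B^{h<g<f}}$ and ${B^{g<f<h}}$ respectively; hence the construction of Theorem \ref{thm:construction} with these intensities is well-posed and we only need $\epsilon$ to be absorbable.

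The second step is the verification argument itself. Fix $x$ and an arbitrary admissible $\tau_1\in\mathbb{T}_1$; I would compute $J(x;\tau_1,\tau^*_2)$ by applying the It\^o--Tanaka/occupation-time decomposition \eqref{2q4124} (for the appropriate payoff function along the relevant region) to $e^{-rt}V(X_t)$ up to the relevant stopping time, using that $e^{-r(t\wedge\tau^{D_2^*})}V(X_{t\wedge\tau^{D_2^*}})$ is a supermartingale (Proposition \ref{asso-game:characterization}) and that on ${B^{g<f<h}}$ the local-time/Lebesgue mass of $dC^+(f)$ is precisely cancelled by Player $2$'s randomization intensity \eqref{simplified-construction:eq2}. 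The upshot should be $J(x;\tau_1,\tau^*_2)\le V(x)$ for every $\tau_1$, with equality for $\tau_1=\tau^*_1$; the symmetric computation with the maximizer's intensity \eqref{simplified-construction:eq1} (using the submartingale property of $e^{-r(t\wedge\tau^{D_1^*})}V(X_{t\wedge\tau^{D_1^*})}$) gives $J(x;\tau^*_1,\tau_2)\ge V(x)$. Care must be taken on the overlap $B^{g\le f}=D_1^*\cap D_2^*$ where $V=\tilde f=\tilde g$ and where on ${B^{g\le h\le f}}$ both players stop immediately, on ${B^{h<g<f}}$ only Player $1$ randomizes, on ${B^{g<f<h}}$ only Player $2$ randomizes — so the indicator $\mathbb{I}\{\tau_1=\tau_2\}$ term in \eqref{cost-function} involving $h$ has zero probability under $(\tau^*_1,\tau^*_2)$ on the randomization regions (randomized stopping times have no atoms there), which is what lets the payoff collapse to the correct $g$ or $f$ value and matches $V$ exactly.

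The third step is to translate \eqref{simplified-construction:eq1}--\eqref{simplified-construction:eq2} into the differentiable form stated at the end: under the stated smoothness, $(\mathbb{K}_X-r)g(dx)$ has the explicit density $\big(\mu g' + \tfrac12\sigma^2 g'' - rg\big)\,dx$ off the exceptional points plus atoms $\tfrac12\sigma^2(x_j)(g'(x_j+)-g'(x_j-))\delta_{x_j}$, so its negative Jordan part $(\,(\mathbb{K}_X-r)g\,)^-$ restricted to ${B^{h<g<f}}$ contributes $\big((\mathbb{L}_X-r)g\big)_-\,dx$ and atoms with the positive-part coefficients shown; dividing by $g-f>0$ (note the sign flip $f-g<0$ on ${B^{h<g<f}}$ turns $\tfrac{1}{f-g}dC^-(g)$ into $\tfrac{1}{g-f}dC^-(g)$ with the right local-time formula via \eqref{asfaf2}--\eqref{asdar3q432tgad}) yields the claimed expression, and symmetrically for $A^{(2)}$. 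I expect the main obstacle to be the verification step on the boundary/overlap regions — precisely showing that the running-cost terms generated by the randomization intensities exactly offset the $dC^\pm$ drift terms of $V$ along $X$, including handling the local-time pushes at isolated points of the stopping sets and confirming that no mass is lost through the natural boundaries (Assumption \ref{assum:1}\eqref{assum:A3}) via the integrability \eqref{assum:integrability} and the limiting condition \eqref{at-infty-assum}; the rest is bookkeeping with the Jordan decomposition and the occupation-time formula.
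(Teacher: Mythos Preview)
Your core computational idea---that the intensity $\tfrac{1}{f-g}\,dC^+(f)$ is exactly what makes $e^{-rt-A_t}f(X_t)+\int_0^t e^{-rs-A_s}g(X_s)\,dA_s$ a supermartingale on $B^{g<f<h}$---is correct and is precisely the content of the paper's Proposition \ref{useful-prop}. The differentiable rewriting in your third step is also right (modulo a slip: on $B^{h<g<f}$ one has $f>g$, so $f-g>0$, not $<0$; the sign change you describe in fact comes from the Jordan decomposition, since $dC^-(g)$ picks up $(-(\mathbb{L}_X-r)g)_+$ and dividing by $f-g>0$ gives $\big(\tfrac{(\mathbb{L}_X-r)g}{g-f}\big)_+$).

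Where your sketch has a genuine gap is the verification step. You propose to combine the supermartingale property of $e^{-r(t\wedge\tau^{D_2^*})}V(X_{t\wedge\tau^{D_2^*}})$ from Proposition \ref{asso-game:characterization} with a ``cancellation'' on $B^{g<f<h}$. But these two pieces do not fit together directly: the supermartingale statement is for the \emph{pure} stopping time $\tau^{D_2^*}$ of the associated game, whereas $\tau_2^*$ in the original game randomizes on $B^{g<f<h}$ and is a different stopping time. The paper instead argues region by region: outside $B^{g<f<h}$ it recycles the $\epsilon=0$ cases already established in the proof of Theorem \ref{thm:construction} (which go through verbatim once $B^{f\le g<h}\cup B^{h<f\le g}=\emptyset$ removes the only sources of $\epsilon$), while \emph{inside} $B^{g<f<h}$ it applies the It\^o computation directly to $f$ (not to $V$), obtaining $J(x;\tau_1,\tau_2^*)\le f(x)$ via Proposition \ref{useful-prop}.

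The piece you are missing entirely is the \emph{patching} between these regions. The assumption $B^{f\le g<h}\cup B^{h<f\le g}=\emptyset$ forces $\partial B^{g<f<h}\subseteq\partial B^{g\le h\le f}$, so at the exit from $B^{g<f<h}$ both players stop immediately and the continuity of $f,g,h$ gives $h=f$ there; hence, via the strong Markov property, $J(X_{\tau^{\partial B^{g<f<h}}};\tau_1,\tau_2^*)\le h(X_{\tau^{\partial B^{g<f<h}}})=f(X_{\tau^{\partial B^{g<f<h}}})$, which is exactly the boundary term needed to close the inequality from Proposition \ref{useful-prop}. Without this observation your argument for $x\in B^{g<f<h}$ cannot be completed, because you have no control on what happens after the process leaves the randomization region.
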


 \begin{remark}[Finding the equilibrium] \label{rem:identifying} 
The value of the original game is determined by the value of the associated game (see Theorem \ref{thm:construction}(i)). Hence, finding the value function $V$ reduces to the well-established framework under the standard ordering condition; in particular, Remark \ref{rem:eq_associated} provides an explicit method for identifying a candidate value function.

Moreover, the corresponding stopping sets in the original game---on which the players either stop immediately or randomize according to \eqref{Ne-stretegy}--\eqref{Ne-stretegy2} for sufficiently large functions $\gamma^{(i)}, \Gamma^{(i)}$---are   
\begin{align*}
D_1^{*}& =\{x \in {B^{f \lesssim h \lesssim g}} \cup {B^{f\leq g<h}} \cup {B^{h<f\leq g}}: f(x)= U(x)\}\cup {B^{g\leq h \leq f}} \cup {B^{h<g<f}} \cup {B^{g<f<h}}\\
D_2^{*}&=\{x \in {B^{f \lesssim h \lesssim g}} \cup {B^{f\leq g<h}} \cup {B^{h<f\leq g}}: g(x)= U(x)\}\cup {B^{g\leq h \leq f}} \cup {B^{h<g<f}} \cup {B^{g<f<h}}.
\end{align*}
If we suppose that the assumptions of Theorem \ref{simplified-construction} hold, then this result together with the observations above gives an explicit description of the equilibrium stopping times. 
In particular, 
(i) 
Player $1$ does nothing on $(D_1^*)^c$, 
randomizes according to the intensity \eqref{simplified-construction:eq2} on ${B^{h<g<f}}= D_1^* \cap {B^{h<g<f}}$, 
and stops on $D_1^* \backslash {B^{h<g<f}}$,  while 
(ii) 
Player $2$ does nothing on $(D_2^*)^c$, 
randomizes according to the intensity \eqref{simplified-construction:eq2} on ${B^{g<f<h}}= D_2^* \cap {B^{g<f<h}}$, and
stops on $D_2^* \backslash {B^{g<f<h}}$.

\end{remark}

\subsection{Examples}\label{sec:fur-ex}

Using the developed theory we here study some examples. 

\begin{example}[Randomization only in terms of a local time push]\label{example1MMM}  
Let $X$ be a Wiener process, $r>0$ and
\begin{align*}
h(x)> f(x)=|x|+1 >g(x), \enskip & x \in (-1,1)\\
h(x)= f(x)= g(x), \enskip &x \in \mathbb{R}\backslash (-1,1),
\end{align*}
where these functions are further specified to fulfill our assumptions (Section \ref{sec:setup}). Note that ${B^{g<f<h}}=(-1,1)$ and ${B^{g\leq h \leq f}}=\mathbb{R}\backslash (-1,1)$. Using the developed theory as described in Remark \ref{rem:identifying}, in particular relying on Theorem \ref{simplified-construction}, we conclude that 
$D_1^*= D_2^*=\mathbb{R}$ and find a global Markovian randomized Nash equilibrium with
$V(x)=f(x)$, that is attained by 
$\tau_{\Psi^{(1)}} = \inf\{t\geq 0: X_t \in \mathbb{R}\}$ and 
$\tau_{\Psi^{(2)}}$ with 
\begin{align}\label{q341fsf}
\begin{split}
\Psi^{(2)}_t 
& = \int_0^t\left(\frac{(\mathbb{L}_X-r)f(X_s)}{f(X_s)-g(X_s)}\right)_+\mathbb{I}\{X_s \in (-1,1)\backslash\{0\}\}ds + \frac{1}{2} \left(\frac{f'(0+)-f'(0-)}{{f(0)-g(0)}}\right)_+l_t^{0}
+\infty \mathbb{I}\{\tau^{ \mathbb{R}\backslash (-1,1)  } \leq t \}\\
& = \frac{l_t^{0}}{{f(0)-g(0)}} + \infty \mathbb{I}\{\tau^{ \mathbb{R}\backslash (-1,1)  } \leq t \}.
\end{split}
\end{align}
An interpretation of \eqref{q341fsf} is that there is a local time push in the accumulated stopping intensity of Player $2$ when the state processes visits $0$, and that Player $2$ stops without randomization on $ \mathbb{R}\backslash (-1,1)$. 

Note, however, that Player $1$ will in equilibrium stop immediately for any initial state, and that Player $2$ will hence in equilibrium never have the chance stop at $0$ (cf. Proposition \ref{prop:simul-stopping2X} in Appendix \ref{proof-app-D}). An interpretation of Player $2$'s equilibrium stopping time, corresponding to \eqref{q341fsf}, is that it ensures that Player $1$ has no incentive to deviate from the equilibrium strategy: If Player 2 did not randomize with an intensity at least as large as in \eqref{q341fsf}, Player $1$ would observe a strict submartingale around 0 and would not stop. In this sense, Player $2$ uses randomization corresponding to \eqref{q341fsf} in order to effectively threaten Player $1$ with the risk of obtaining $g(0)$ (instead of $f(0)$) if Player $1$ would allow the state process to visit $0$. 
\end{example}

\begin{example}[Randomization in terms of local time push and Lebesgue density]\label{examplebhkasdyuiqw} Let $X$ be a Wiener process, $r>0$ and
\begin{align*}
h(x)>f(x)= x^2\mathbb{I}\{x\geq 2\}+ 2x\mathbb{I}\{x<2 \}>g(x), \enskip x \in \mathbb{R}. 
\end{align*}
Note that ${B^{g<f<h}}=\mathbb{R}$. Using arguments similar to those in the example above, we find
$D_1^*= D_2^*=\mathbb{R}$ and a global Markovian randomized Nash equilibrium with 
$V(x)=f(x)$ that is attained by 
$\tau_{\Psi^{(1)}} = \inf\{t\geq 0: X_t \in \mathbb{R}\}=0$ and 
$\tau_{\Psi^{(2)}}$ with
\begin{align*}
\Psi^{(2)}_t 
& = \int_0^t\lambda_2(X_s)ds + c_2l_t^{2}
\end{align*}
where $c_2 = \frac{1}{2} \left(\frac{f'(2+)-f'(2-)}{{f(2)-g(2)}}\right)_+ = \frac{1}{4-g(2)}$ and 
\begin{align*}
\lambda_2(x) 
= \left(\frac{(\mathbb{L}_X-r)f(x)}{f(x)-g(x)}\right)_+\mathbb{I}\{x\neq 2\}
= \begin{cases}
			\left(\frac{-2rx}{2x-g(x)}\right)_+, &  x  < 2\\
			0, & x= 2\\
			\left(\frac{1-rx^2}{x^2-g(x)}\right)_+, & x >2\\
		 \end{cases}
= \begin{cases}
			\frac{-2rx}{2x-g(x)}, &  x  \leq 0\\
			0, &  0 <x  \leq 2\\
			\frac{1-rx^2}{x^2-g(x)}, & 2 < x < \frac{1}{\sqrt{r}}\\
			0, & x \geq  \frac{1}{\sqrt{r}}
		 \end{cases}.
\end{align*}
See Figure \ref{fig:examplebhkasdyuiqw} for an illustration. 
\end{example}

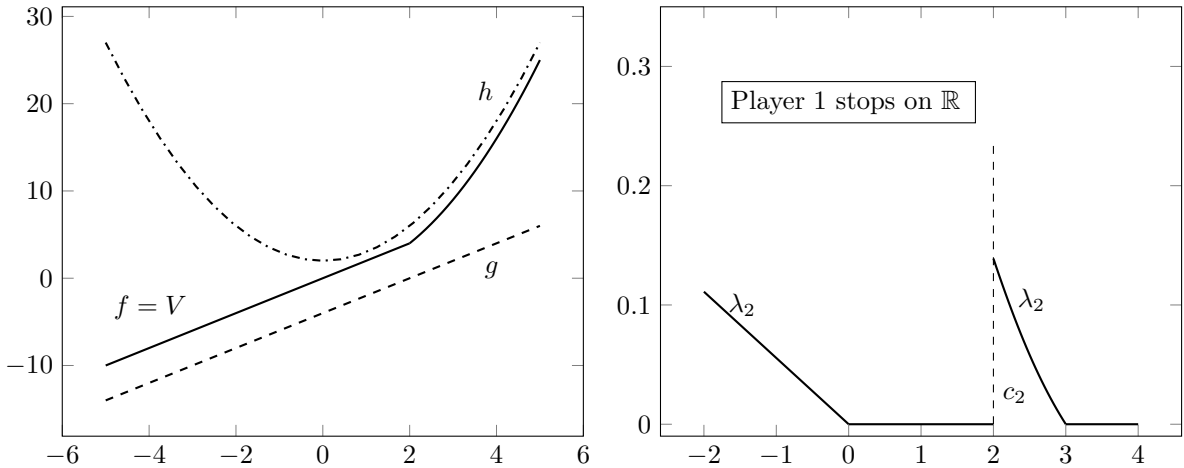
\begin{figure}[ht]
	\begin{center}
		\begin{tikzpicture}
	\begin{axis}
	\addplot[thick, solid, domain=-5:2] {2*x} node [pos=0.3, above left] {$f=V$};
	\addplot[thick, solid, domain=2:5] {x^2};
	\addplot[thick, dashed, domain=-5:5] {2*x-4} node [pos=0.85, below right] {$g$};
	\addplot[thick, dashdotted, domain=-5:5] {x^2+2} node [pos=0.85, above left] {$h$};
	\end{axis} 
	\end{tikzpicture}
			\begin{tikzpicture}
	\begin{axis}[ymin= -0.01, ymax= 0.35]
	\addplot[thick, solid, domain=-2:0] {(-2*x/9)/(2*x-(2*x-4))} node [pos=0.1, right] {$\lambda_2$};
	\addplot[thick, solid, domain=0:2] {0};
	\addplot[thick, solid, domain=2:3] {(1-x^2/9)/(x^2-(2*x-4))}  node [pos=0.2, right] {$\lambda_2$};
	\addplot[thick, solid, domain=3:4] {0};
	%
	\draw[dashed, dashed] (400,10) -- (400,250) node [pos=0.1, right] {$c_2$};
		\node[draw,text width=3.1cm] at (200,280) {Player $1$ stops on $\mathbb{R}$};
	\end{axis} 
	\end{tikzpicture}
			\end{center}
			\caption{Illustration for Example \ref{examplebhkasdyuiqw} with $r=1/9$, $g(x)=2x-4$ and $h(x)=x^2+2$. The vertical line illustrates the local time push $c_2=1/4$ at $x=2$.}
			\label{fig:examplebhkasdyuiqw}
\end{figure}

\begin{example}[Randomization only in terms of a Lebesgue density]\label{exampleyiweh} Let $X$ be a Wiener process, $r>0$, $f(x)= (x-1)^2 + 1$, 
\begin{align*}
g(x)=\begin{cases}
			|x|+2, &  x<0\\
			2, &  x \geq 0
		 \end{cases}
\end{align*}
and 
\begin{align*}
h(x)=\begin{cases}
			x^2+2, & x <0\\
			2, &  x \geq 0.
		 \end{cases}
\end{align*}
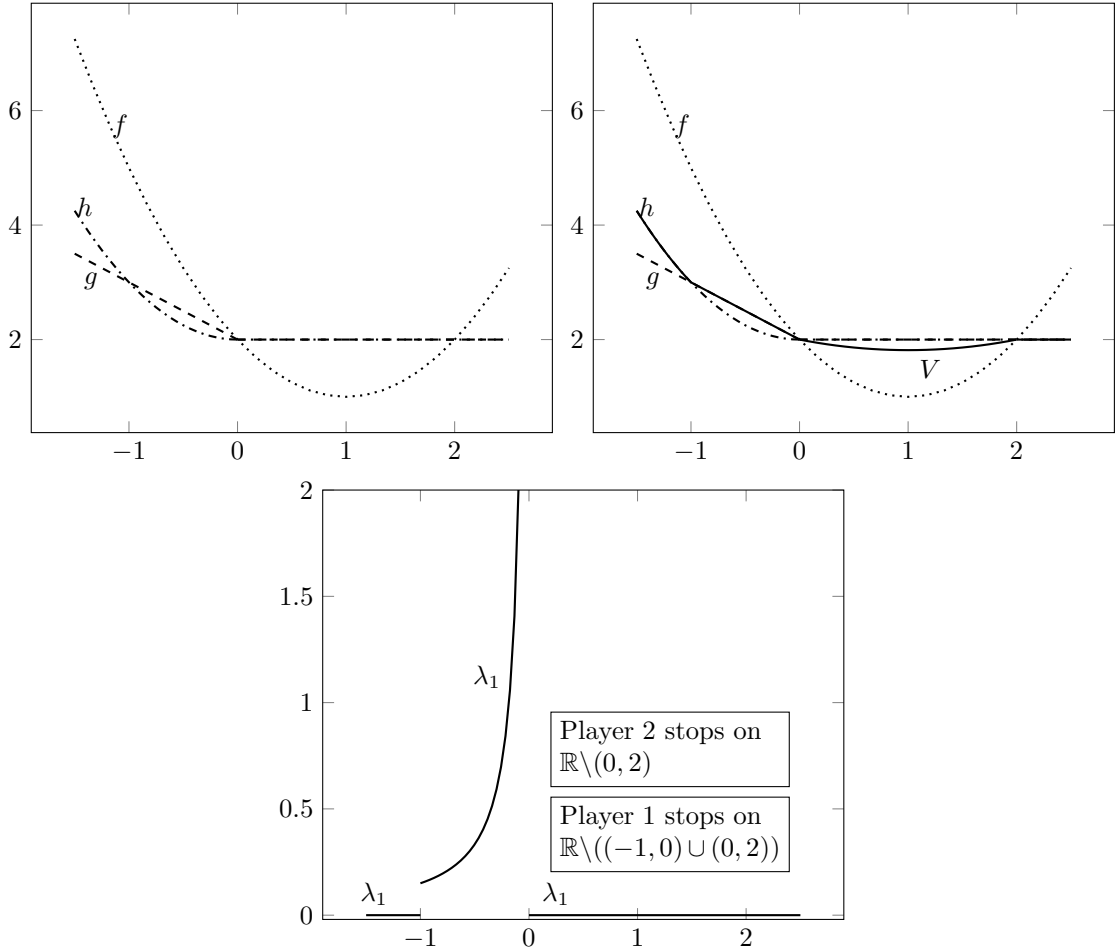
\begin{figure}[ht]
	\begin{center}
		\begin{tikzpicture}
	\begin{axis}
	\addplot[thick, dotted, domain=-1.5:2.5] {(x-1)^2+1} node [pos=0.2, above] {$f$};
	\addplot[thick, dashed, domain=-1.5:0] {abs(x)+2} node [pos=0.1, below] {$g$};
 	\addplot[thick, dashed, domain=0:2.5] {2};
        \addplot[thick, dashdotted, domain=-1.5:0] {x^2+2} node [pos=0.1, above] {$h$};
        \addplot[thick, dashdotted, domain=0:2.5] {2};
	\end{axis} 
	\end{tikzpicture}
 \begin{tikzpicture}
	\begin{axis}
	\addplot[thick, dotted, domain=-1.5:2.5] {(x-1)^2+1} node [pos=0.2, above] {$f$};
	\addplot[thick, dashed, domain=-1.5:0] {abs(x)+2} node [pos=0.1, below] {$g$};
 	\addplot[thick, dashed, domain=0:2.5] {2};
        \addplot[thick, dashdotted, domain=-1.5:0] {x^2+2} node [pos=0.1, above] {$h$};
        \addplot[thick, dashdotted, domain=0:2.5] {2};
	\addplot[thick, solid, domain=-1.5:-1] {x^2+2};
        \addplot[thick, solid, domain=-1:0] {abs(x)+2};
	\addplot[thick, solid, domain=-0:2] {
 2*(1-e^( -2*sqrt(2*0.1)))/( e^(2*sqrt(2*0.1)) -e^(-2*sqrt(2*0.1)))*e^(x*sqrt(2*0.1))
	+(2-2*(1-e^( -2*sqrt(2*0.1)))/( e^(2*sqrt(2*0.1)) -e^(-2*sqrt(2*0.1))))*e^(-x*sqrt(2*0.1))
	}  node [pos=0.6, below] {$V$};
        \addplot[thick, solid, domain=2:2.5] {2};
	\end{axis} 
	\end{tikzpicture}
			\begin{tikzpicture}
	\begin{axis}[ymin= -0.02, ymax= 2]
	\addplot[thick, solid, domain=-1.5:-1] {0} node [pos=0.15, above] {$\lambda_1$};
        \addplot[thick, solid, domain=-1:-0.01] {-0.1*(abs(x)+2)/(abs(x)+1-(x-1)^2)} node [pos=0.07, left] {$\lambda_1$};
	\addplot[thick, solid, domain=0:2.5] {0} node [pos=0.1, above] {$\lambda_1$};
		\node[draw,text width=2.9cm] at (280,8) {Player $2$ stops on $\mathbb{R}\backslash (0,2)$};
	\node[draw,text width=2.9cm] at (280,4) {Player $1$ stops on $\mathbb{R}\backslash((-1,0)\cup(0,2))$};
	\end{axis} 
	\end{tikzpicture}
			\end{center}
			\caption{
			Illustration for Example \ref{exampleyiweh} with $r=0.1$.}
			\label{fig:exampleyiweh}
\end{figure}
Note that 
${B^{g\leq h \leq f}} = (-\infty, -1] \cup \{0\} \cup [2,\infty)$,
${B^{f \lesssim h \lesssim g}}=(0,2)$ 
and ${B^{h<g<f}}=(-1,0)$. In order to find an equilibrium we start by defining
\begin{align*}
U(x):= Ce^{x\sqrt{2r}} + (2-C)e^{-x\sqrt{2r}},\enskip x \in [0,2], 
\end{align*}
where $C:=2\frac{1-e^{-2\sqrt{2r}}}{ e^{2\sqrt{2r}}-e^{-2\sqrt{2r}} }$, which implies that $U(0)=U(2)=2$. We make the additional assumption that $r>0$ is sufficiently small for it to hold that
\begin{align*}
U(x) \geq f(x),\enskip x \in [0,2]. 
\end{align*}
Using Theorem \ref{simplified-construction} and the ideas described in Remark \ref{rem:identifying} we conclude that $D_1^*= D_2^*=\mathbb{R}\backslash (0,2)$ and find a global Markovian randomized Nash equilibrium with 
\begin{align*}
V(x)=\begin{cases}
     h(x)=x^2+2, &  x \leq -1 \\
    g(x)=|x|+2, &   -1 < x \leq 0 \\
    Ce^{x\sqrt{2r}} + (2-C)e^{-x\sqrt{2r}}, &  0 < x \leq 2 \\
    h(x)=g(x)=2, &   2 < x,
\end{cases}
\end{align*}
that is attained by $\tau_{\Psi^{(2)}} = \inf\{t\geq 0: X_t \in \mathbb{R}\backslash (0,2) \}$ and 
$\tau_{\Psi^{(1)}}$ with
\begin{align*}
\Psi^{(1)}_t= \int_0^t\lambda_1(X_s)\mathbb{I}\{X_s \in (-1,0))\} ds  + 
 \infty \mathbb{I}\{\tau^{\mathbb{R}\backslash((-1,0)\cup(0,2))} \leq t \}
\end{align*}
where
\begin{align*}
\lambda_1(x)\mathbb{I}\{x \in (-1,0)\}= \left(\frac{(\mathbb{L}_X-r)g(x)}{g(x)-f(x)}\right)_+\mathbb{I}\{x \in (-1,0)\} = \frac{-r(|x|+2)}{|x|+1-(x-1)^2}\mathbb{I}\{x \in (-1,0)\}.
\end{align*}
See Figure \ref{fig:exampleyiweh} for an illustration. 
Observe that \( V \) is not differentiable at $x = -1$ and $x=2$ despite this being the case for all three payoff functions on these points. This demonstrates that a strong smooth fit principle does not necessarily apply for our games. In particular, we note that smooth fit should not generally be expected on points with $V(x)=h(x)$  where both players stop with certainty.  
\end{example}

\section{Further results on the existence of different kinds of equilibria}\label{sec:NE-condi}
Theorem \ref{thm:construction} establishes general existence of a global Markovian randomized $\epsilon$-Nash equilibrium for any $\epsilon>0$, while Theorem \ref{simplified-construction} establishes existence of a global Markovian randomized Nash equilibrium under additional assumptions.

In this section, we delve deeper into the conditions under which different types of equilibria exist. To illustrate these concepts, we begin with two examples.
The first example presents a game where only global Markovian randomized $\epsilon$-Nash equilibria with $\epsilon>0$ can be found, implying that Theorem \ref{thm:construction} cannot be made stronger without additional assumptions. 
The second example shows that the standard sufficient condition, i.e., $f(x) \leq h(x) \leq g(x), x\in {\mathcal I}$, is not a necessary condition for global Markovian pure Nash equilibrium existence.

\begin{example}[Generally only global Markovian randomized $\epsilon$-Nash equilibria exist.] \label{example:non-exist-NE}
Let $X$ be a Wiener process, $r>0$, and consider the standard optimal stopping problem 
\begin{align*}
\sup_\tau \E_x\left( e^{-r\tau}X_{\tau}^2 \right) = \E_x\left( e^{-r\tau^*}X_{\tau^*}^2 \right), x \in \mathbb{R},
\end{align*}
where we recall that the smooth fit condition allows for determining a constant $b>0$ so that $\tau^*:=\inf\{t\geq 0: X_t \notin (-b,b)\}$. Suppose $f,g$ and $h$ satisfy
\begin{align*}
g(x)> \E_x\left( e^{-r\tau^*}X_{\tau^*}^2 \right) \geq f(x)=x^2 >h(x), \enskip x \in \mathbb{R}.
\end{align*}
For the corresponding stopping game we find, relying on Proposition \ref{asso-game:characterization}, Theorem \ref{thm:construction} and Remark \ref{rem:identifying}, $D_1^*=(-\infty,b]\cup [b,\infty)$, $D_2^*= \emptyset$ and the equilibrium value 
\begin{align}\label{ex:a2esdasd}
V(x)=\E_x\left( e^{-r\tau^*}X_{\tau^*}^2 \right), \enskip x \in \mathbb{R}.
\end{align}
Indeed, using also $\mathbb{R} = {B^{h<f\leq g}}$ we find, for any $\epsilon>0$, a global Markovian randomized $\epsilon$-Nash equilibrium, with corresponding value $V$, as in \eqref{ex:a2esdasd}, that is attained by $\tau_{\Psi^{(2)}}=\infty$ and $\tau_{\Psi^{(1)}}$ with
\begin{align*}
\Psi_t^{(1)} = \int_0^t \gamma^{(1)}_\epsilon(X_s)  \mathbb{I}{\{X_s \notin (-b,b)\}}ds, 
\end{align*}
for some function $\gamma^{(1)}(\cdot)$ that is sufficiently large (cf. Corollary \ref{Equilibrium-uniqueness}). 

Furthermore, it is clear that the equilibrium $V$ cannot be attained in Markovian pure stopping times (or as the limit of Markovian pure stopping times), since if Player $1$ would stop at any given $x$ with certainty, then Player $2$ would also stop (since $h(x)<f(x)\leq V(x)$) implying that the corresponding value could not be $V(x)$. Indeed, it is easily seen that there can be no global Markovian randomized Nash equilibrium and no global Markovian pure $\epsilon$-Nash equilibrium in this example (whenever $\epsilon\geq 0$ is sufficiently small). 
\end{example}

The influential result \cite[Theorem 2.1]{ekstrom2008optimal} implies that the condition $f(x)\leq h(x) \leq g(x), x\in {\mathcal I}$ is sufficient for the existence of a global Markovian pure Nash equilibrium. Moreover, it can be be verified (cf. Theorem \ref{thm:construction}) that a weaker sufficient condition is that $f(x) \leq h(x) \leq g(x)$ or  $g(x) \leq h(x) \leq f(x)$, for each $x\in {\mathcal I}$; i.e., it is sufficient that $h(x)$ is always between $f(x)$ and $g(x)$, i.e., ${B^{h<g<f}}\cup {B^{g<f<h}} \cup {B^{f\leq g<h}}\cup {B^{h<f\leq g}} =\emptyset$. 
The following simple example shows that none of these conditions is necessary for the existence of a global Markovian pure Nash equilibrium:

\begin{example}[$f\leq h \leq g$  is not necessary for global Markovian pure Nash equilibrium existence.]\label{example-no-ordering-with-pureNE}
Let $X$ be a Wiener process and $r>0$. Let $f$ and $g$ be as in Example \ref{example:non-exist-NE}. Let $b>0$ correspond to the optimal stopping thresholds for \eqref{ex:a2esdasd}, as in Example \ref{example:non-exist-NE}. 
Suppose $h$ satisfies, for some $a \in (0,b)$,
\begin{align*}
h(x)&< g(x), \enskip  x\in \mathbb{R}\\
h(x)&< V(x), \enskip x \in(-a,a)\\
h(x)&> V(x), \enskip x \in \mathbb{R}\backslash [-a,a]\\
h(0)&< f(0).
\end{align*}
See Figure \ref{fig:example-no-ordering-with-pureNE} for an illustration. Since $h(0)< f(0)<g(0)$, we have that the mentioned sufficient conditions for global Markovian pure Nash equilibrium existence are violated. However, a global Markovian pure Nash equilibrium exists. In particular, using arguments similar to those in Example \ref{example:non-exist-NE} it can be verified that a global Markovian pure Nash equilibrium is attained 
by $\tau_{\Psi^{(1)}}=\inf\{t \geq: X_s \notin (-b,b)\}$ and $\tau_{\Psi^{(2)}}=\infty$, and that the equilibrium value $V$ is given by \eqref{ex:a2esdasd}. 
\end{example}

\begin{figure}[ht]
	\begin{center}
		\begin{tikzpicture}
	\begin{axis}[xmin= -5, xmax= 5]
	\addplot[thick, dotted, domain=-5:5] {x^2} node [pos=0.5, above] {$f$};
	\addplot[thick, dashed, domain=-5:5] {x^2+10} node [pos=0.5, above] {$g$};
	\addplot[thick, dashdotted, domain=-2:2] {8*abs(x)-5} node [pos=0.5, right] {$h$};
	\addplot[thick, dashdotted, domain=-5:-2] {x^2+7};
	\addplot[thick, dashdotted, domain=2:5] {x^2+7};
	\addplot[thick, solid, domain=-5:5] {4.618230^2/(e^(sqrt(2*0.1)*4.618230)+e^(-sqrt(2*0.1)*4.618230))*(e^(sqrt(2*0.1)*x)+e^(-sqrt(2*0.1)*x))} node [pos=0.5, above] {$V$};
	\end{axis} 
	\end{tikzpicture}
		\end{center}
			\caption{Illustration for Example \ref{example-no-ordering-with-pureNE}.}
			\label{fig:example-no-ordering-with-pureNE}
\end{figure}
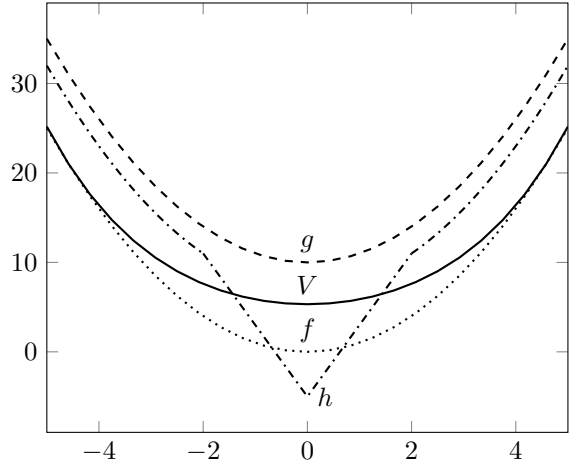

The following result allows us to determine if a global Markovian pure Nash equilibrium exists based on conditions stated in terms of the stopping sets $D^*_1$ and $D^*_2$, which for many games can be identified using Remark \ref{rem:identifying}. Note that the standard condition $f \leq h \leq g$ implies the condition of Theorem \ref{thm:NE-conditions1}, meaning that our result is more general than the currently known results (cf. Example \ref{example-no-ordering-with-pureNE}). The result follows immediately from Theorem \ref{thm:construction} (see in particular \eqref{Ne-stretegy2}) and we therefore provide no further proof. 

\begin{theorem} [Sufficient conditions for global Markovian pure Nash equilibrium existence] \label{thm:NE-conditions1}
Suppose
\begin{align*}
D_1^{*} \cap ({B^{h<g<f}} \cup {B^{h<f\leq g}})
=D_2^{*} \cap ({B^{g<f<h}} \cup {B^{f\leq g<h}}) = \emptyset,
\end{align*}
then a global Markovian pure Nash equilibrium exists. In particular, the equilibrium is attained when Player $1$ stops on  
$D_1^{*}= D_1^{*} \backslash ({B^{h<g<f}} \cup {B^{h<f\leq g}})$ and Player $2$ stops on $D_2^{*} = D_2^{*} \backslash ({B^{g<f<h}} \cup {B^{f\leq g<h}})$, without any randomization. 
\end{theorem}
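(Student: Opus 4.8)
The plan is to derive Theorem \ref{thm:NE-conditions1} as a direct specialization of Theorem \ref{thm:construction}(ii), observing that the hypothesis $D_1^{*} \cap ({B^{h<g<f}} \cup {B^{h<f\leq g}}) = D_2^{*} \cap ({B^{g<f<h}} \cup {B^{f\leq g<h}}) = \emptyset$ is exactly the condition under which the randomization components of the constructed equilibrium vanish identically. First I would recall that Theorem \ref{thm:construction}(ii) asserts, for every $\epsilon > 0$, that the pair $(\tau_{\Psi^{(1)}}, \tau_{\Psi^{(2)}})$ given by \eqref{Ne-stretegy}--\eqref{Ne-stretegy2} is a global Markovian randomized $\epsilon$-Nash equilibrium. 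Then I would examine the expression \eqref{Ne-stretegy2} for $A_t^{(1)}$: under the hypothesis, the set $D_1^{*} \cap (B^{h<g<f} \cup B^{h<f\leq g})$ is empty, so the indicator $\mathbb{I}\{X_s \in D_1^{*} \cap (B^{h<g<f} \cup B^{h<f\leq g})\}$ is identically zero (killing the Lebesgue-density term), and the index set $\mbox{iso}(D_1^{*} \cap (B^{h<g<f} \cup B^{h<f\leq g}))$ over which the local-time sum runs is empty (killing the local-time push term). Hence $A_t^{(1)} \equiv 0$, and symmetrically $A_t^{(2)} \equiv 0$.

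Next, substituting $A_t^{(i)} \equiv 0$ into \eqref{Ne-stretegy}, we get $\Psi^{(1)}_t = \infty \mathbb{I}\{\tau^{D_1^{*} \backslash (B^{h<g<f} \cup B^{h<f\leq g})} \leq t\}$ and analogously for $\Psi^{(2)}_t$. By Definition \ref{def:puremixedstoppingtimes}, a $\Psi^{(i)}$ of this form (with the additive-functional part absent) corresponds to a Markovian \emph{pure} stopping time, namely the first entry time $\tau_{\Psi^{(i)}} = \tau^{D_i^{*} \backslash (\cdots)}$. Moreover, since $D_1^{*} \cap (B^{h<g<f} \cup B^{h<f\leq g}) = \emptyset$ we have the set identity $D_1^{*} \backslash (B^{h<g<f} \cup B^{h<f\leq g}) = D_1^{*}$, and likewise $D_2^{*} \backslash (B^{g<f<h} \cup B^{f\leq g<h}) = D_2^{*}$. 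Therefore the constructed pair reduces to $(\tau^{D_1^{*}}, \tau^{D_2^{*}})$, two Markovian pure stopping times.

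Finally, I would combine these observations: by Theorem \ref{thm:construction}(ii) the pair $(\tau^{D_1^{*}}, \tau^{D_2^{*}})$ satisfies the $\epsilon$-Nash inequalities \eqref{def:SE} for \emph{every} $\epsilon > 0$ simultaneously (the construction in Appendix \ref{appendix-construction} producing the same stopping-time pair regardless of $\epsilon$ once the randomization terms are absent); since the defining inequalities are continuous in $\epsilon$ and hold for all $\epsilon > 0$, letting $\epsilon \downarrow 0$ yields \eqref{def:SE} with $\epsilon = 0$ for all $\tau_1 \in \mathbb{T}_1$, $\tau_2 \in \mathbb{T}_2$ and all $x \in {\cal I}$. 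Thus $(\tau^{D_1^{*}}, \tau^{D_2^{*}})$ is a global Markovian pure Nash equilibrium, with equilibrium value $V$ as characterized in Proposition \ref{asso-game:characterization}. The main (and essentially only) subtlety is the passage from ``$\epsilon$-Nash for every $\epsilon > 0$'' to ``$0$-Nash'': one must confirm that the single fixed pair $(\tau^{D_1^{*}}, \tau^{D_2^{*}})$ works uniformly across all $\epsilon$, which is immediate here because \eqref{Ne-stretegy}--\eqref{Ne-stretegy2} specialize to an $\epsilon$-independent pair under the hypothesis — there is no genuine limiting argument needed, only the observation that $\bigcap_{\epsilon > 0}\{\text{pair is }\epsilon\text{-Nash}\}$ forces the $\epsilon = 0$ inequality. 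This is why, as the statement notes, the result follows immediately and no separate proof is supplied.
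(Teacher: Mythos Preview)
Your proposal is correct and matches the paper's own reasoning exactly: the paper states that the result follows immediately from Theorem \ref{thm:construction} (pointing in particular to \eqref{Ne-stretegy2}) and provides no further proof. Your observation that the hypothesis kills both randomization terms in \eqref{Ne-stretegy2}, reducing the constructed pair to the $\epsilon$-independent pure pair $(\tau^{D_1^*},\tau^{D_2^*})$, and that the $\epsilon$-Nash inequalities for all $\epsilon>0$ then trivially yield the $\epsilon=0$ case, is precisely the intended argument.
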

An interpretation of the assumption is this: for example, if 
$D_1^{*} \cap ({B^{h<g<f}} \cup {B^{h<f\leq g}})=\emptyset$ then, for any $x$ with $h(x)<f(x) \wedge g(x)$, it holds that Player $1$ does not want to stop, so that Player $2$ has no opportunity to achieve $h(x)$, which Player $2$ would prefer over $g(x)$ in this case. We remark that Theorem \ref{thm:NE-conditions1} could have been used as an alternative starting point in the analysis of e.g., the game in Example \ref{example-no-ordering-with-pureNE}. In such an approach one could first find the value $V$ and the corresponding sets $D_i^*,i=1,2$ relying on Remark \ref{rem:identifying} and then conclude, using Theorem \ref{thm:NE-conditions1}, that the equilibrium is a global Markovian pure Nash equilibrium.  

The following result provides sufficient conditions for the absence of global Markovian pure Nash equilibria. It is based on the insight that the best-response problems of our game are standard stopping problems. 
 The proof is found in Appendix \ref{proof-app-E}.

\begin{theorem}[Sufficient conditions for the non-existence of global Markovian pure Nash equilibria]  \label{thm:NE-conditions2} 
Consider the following conditions.
\begin{enumerate}[(i)]
\item There exists a point $x_0 \in {B^{g<f<h}} \cup {B^{f\leq g<h}}=\{x \in {\cal I}: f(x) \vee g(x) < h(x)\}$ with
\begin{align}\label{asdasda}
\sup_{\tau < \tau^{({B^{g<f<h}} \cup {B^{f\leq g<h}})^c}}\mathbb{E}_{x_0}(e^{-r\tau}f(X_\tau)) > f (x_0) \vee g(x_0).
\end{align}

\item  There exists a point $x_0 \in {B^{h<g<f}} \cup {B^{h<f\leq g}}=\{x \in {\cal I}: h(x) < f(x) \wedge g(x)\}$ with
\begin{align}\label{asdasda2}
\inf_{\tau < \tau^{({B^{h<g<f}} \cup {B^{h<f\leq g}})^c}}\mathbb{E}_{x_0}(e^{-r\tau}g(X_\tau)) < f (x_0) \wedge g(x_0).
\end{align}
\end{enumerate}
If \eqref{asdasda} or \eqref{asdasda2} holds, then no global Markovian pure Nash equilibrium exists. 
\end{theorem}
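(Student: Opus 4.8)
The plan is to argue by contradiction: suppose condition (i) holds at some $x_0 \in B^{g<f<h} \cup B^{f \leq g < h}$, and suppose a global Markovian pure Nash equilibrium $(\tau^{D_1}, \tau^{D_2})$ exists with value function $W(x) := J(x; \tau^{D_1}, \tau^{D_2})$. By the general results of the excerpt, $W = V$, the common value of the game, and $f(x) \wedge g(x) \leq V(x) \leq f(x) \vee g(x)$ for all $x$ (Proposition \ref{asso-game:characterization}(ii)); in particular $V(x_0) \leq f(x_0) \vee g(x_0)$. The key structural observation I would exploit is that in a \emph{pure} equilibrium, at the point $x_0$ — where $h(x_0) > f(x_0) \vee g(x_0)$ — it cannot be that both players stop immediately (that would give payoff $h(x_0) > V(x_0)$, contradicting $V(x_0) \le f(x_0)\vee g(x_0)$), and it cannot be that Player $2$ stops immediately while Player $1$ does not (then the payoff is $g(x_0)$, but Player $1$ could deviate — see below — to do strictly better, or one checks $V(x_0)=g(x_0)$ is incompatible with the submartingale requirement). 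The upshot I want is: in a neighborhood of $x_0$ inside $B^{g<f<h} \cup B^{f \leq g<h}$, Player $2$ does not stop, i.e.\ this set does not meet $D_2$ near $x_0$, so the only relevant stopping set there is $D_1$.

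Next I would invoke the best-response characterization: since Player $2$'s equilibrium stopping time $\tau^{D_2}$ is fixed and does not trigger near $x_0$, Player $1$'s best response is the solution of a \emph{standard optimal stopping problem} against $\tau^{D_2}$, whose reward on the region $R := B^{g<f<h} \cup B^{f \leq g < h}$ is effectively $f$ (because on $R$, if Player $1$ stops before Player $2$, the payoff is $f$, and Player $2$ will not stop inside $R$). Concretely, for Player $1$ starting at $x_0$, one admissible strategy is to wait until the first exit of $X$ from $R$ and stop optimally within $R$ according to some $\tau < \tau^{R^c}$; against $\tau^{D_2}$ (which does not fire in $R$) this yields expected reward $\mathbb{E}_{x_0}(e^{-r\tau} f(X_\tau))$. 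Taking the supremum over such $\tau$, Player $1$ can guarantee at least $\sup_{\tau < \tau^{R^c}} \mathbb{E}_{x_0}(e^{-r\tau} f(X_\tau))$, which by hypothesis \eqref{asdasda} strictly exceeds $f(x_0) \vee g(x_0) \geq V(x_0)$. But in a Nash equilibrium $J(x_0; \tau_1, \tau^{D_2}) \le J(x_0; \tau^{D_1}, \tau^{D_2}) = V(x_0)$ for every admissible $\tau_1$ — contradiction. The case of condition (ii) is entirely symmetric, with the roles of the players swapped, $f \leftrightarrow g$, $\sup \leftrightarrow \inf$, and the region $R' := B^{h<g<f} \cup B^{h<f\leq g}$; here $h(x_0) < f(x_0) \wedge g(x_0)$ forces that Player $1$ does not stop near $x_0$, so Player $2$ faces a standard stopping problem with reward $g$ on $R'$, and \eqref{asdasda2} gives the strict improving deviation for the minimizer.

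The main obstacle is the step establishing that near $x_0$ the \emph{other} player does not stop — i.e.\ converting the pointwise inequality $h(x_0) > f(x_0) \vee g(x_0)$ (resp.\ $h(x_0) < f(x_0) \wedge g(x_0)$) together with $V(x_0) \le f(x_0)\vee g(x_0)$ into the statement that $\tau^{D_2}$ (resp.\ $\tau^{D_1}$) does not trigger on a suitable sub-region of $R$ that $X$ traverses before exiting $R$. One has to be careful: a priori Player $2$ could stop at some points of $R$ even if not at $x_0$. The clean way around this is to note that the sup in \eqref{asdasda} is over $\tau < \tau^{R^c}$, so it suffices that Player $1$ can \emph{mimic} such a $\tau$ as a deviation, and the value of that deviation is bounded below by $\mathbb{E}_{x_0}(e^{-r\tau}f(X_\tau))$ \emph{regardless} of where Player $2$ stops inside $R$, using that $V \le f$ on the part of $R$ where $g \le f$ and $V \le g < f$-type bounds — more precisely one checks that on all of $R$ the equilibrium payoff from Player $1$'s perspective, whatever Player $2$ does, is at most... — here I would instead simply argue that the equilibrium payoff $V(x_0)$ cannot exceed $f(x_0)\vee g(x_0)$ (already known) while the deviation payoff exceeds it, without needing to pin down $D_2$ at all; the deviation "follow the $\tau$ achieving the sup in \eqref{asdasda}" gives Player $1$ payoff at least $\mathbb{E}_{x_0}(e^{-r(\tau \wedge \tau^{D_2})}(\cdots))$, and on $R$ one has $f(X_t) \ge $ the realized payoff whenever Player $2$ moves first there, since on $R$, $f \ge g$ or the relevant comparison holds — this comparison is exactly what the definitions of $B^{g<f<h}$ and $B^{f\le g<h}$ must be checked against, and that bookkeeping is the delicate part of the write-up.
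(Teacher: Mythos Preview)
Your overall strategy matches the paper's, and you correctly identify the crux: you need that Player~$2$ does not stop anywhere in $R:=B^{g<f<h}\cup B^{f\leq g<h}$, not merely at $x_0$. You flag this as the main obstacle but do not resolve it. The resolution is much simpler than your attempted workaround: the two-case argument you give at $x_0$ works verbatim at \emph{every} $x\in R$, because a \emph{global} pure Nash equilibrium must satisfy the equilibrium condition \eqref{def:SE} for every starting point. Concretely, for any $x\in R$: if $x\in D_1\cap D_2$ the payoff is $h(x)$, and Player~$2$ can deviate to not stop, obtaining $f(x)<h(x)$; if $x\in D_2\setminus D_1$ the payoff is $g(x)$, and Player~$1$ can deviate to stop immediately, obtaining $h(x)>g(x)$. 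Either way $x\notin D_2$, so $D_2\cap R=\emptyset$. With this in hand your best-response argument goes through cleanly and yields $V(x_0)\geq \sup_{\tau<\tau^{R^c}}\mathbb{E}_{x_0}(e^{-r\tau}f(X_\tau))>f(x_0)\vee g(x_0)$, contradicting $V(x_0)\leq f(x_0)\vee g(x_0)$.

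Your alternative route---bounding the deviation payoff without first showing $D_2\cap R=\emptyset$---does not work. You need that if Player~$2$ stops first inside $R$ then Player~$1$ still receives at least $f$ at that point, but on $B^{f\leq g<h}\subseteq R$ one has $g\geq f$, so the realized payoff $g(X_{\tau^{D_2}})$ can exceed $f(X_{\tau^{D_2}})$ and the desired lower bound fails. Also, in your case ``Player~$2$ stops, Player~$1$ does not'', the clean deviation for Player~$1$ is simply to stop immediately (yielding $h(x)>g(x)$); no submartingale argument is needed.
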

We remark that Theorem \ref{thm:NE-conditions2} can be used to, e.g., see that no global Markovian pure Nash equilibrium exists in Example \ref{example1MMM}. The interpretation of, for example, (i) is as follows: On ${B^{g<f<h}} \cup {B^{f\leq g<h}}=\{x \in {\cal I}: f(x) \vee g(x) < h(x)\}$ Player 2 has no interest in simultaneous stopping, while condition \eqref{asdasda} means that Player 1 finds a stopping time on this set that dominates immediate stopping by exactly one of the players. 
We also note that results similar to Theorems \ref{thm:NE-conditions1} and \ref{thm:NE-conditions2} for a discrete time version of the present game were first reported in \cite{christensen2023markovian}.

The following example shows that the conditions in Theorem \ref{thm:NE-conditions2} are not necessary for the non-existence of global Markovian pure Nash equilibria.

\begin{example}\label{example:standardcondnotneeded} Let $X$ be a Wiener process. Let 
\begin{align*}
f(x)=4|x|, \enskip g(x)=x^2+3, \enskip &x \in (-1,1),\\
f(x)= g(x)=h(x), \enskip &x \in \mathbb{R}\backslash (-1,1). 
\end{align*}
This implies that $f(x)<g(x), x \in (-1,1)$. 
Further specify $h$ so that 
$h(x)>f(x), x \in (-1,1)$, 
$h(x)>g(x), x \in (-1,0)$, and 
$h(x)<g(x), x \in (0,1)$. 
Conclude that
${B^{g\leq h \leq f}} = \mathbb{R}\backslash (-1,1)$, 
${B^{f\leq g<h}} = (-1,0)$, and 
${B^{f \lesssim h \lesssim g}} = [0,1)$. Let $r>0$ be sufficiently small to imply that 
$(\mathbb{L}_X-r)g(x)\geq 0 $, for $x\in (-1,1)$.  See Figure \ref{figexamplestandardcondnotneeded} for an illustration.

\begin{figure}[ht]
	\begin{center}
		\begin{tikzpicture}
	\begin{axis}
	\addplot[thick, dotted, domain=-1:1] {4*abs(x)} node [pos=0.3, below left] {$f$};
	\addplot[thick, solid, domain=-1:1] {x^2+3} node [pos=0.75, above left] {$g=V$};
	\addplot[thick, dashdotted, domain=-1:0.5] {3-x} node [pos=0.3, above] {$h$};
	\addplot[thick, dashdotted, domain=0.5:1] {1+3*x};
	\end{axis} 
	\end{tikzpicture}
		\end{center}
			\caption{Illustration for Example \ref{example:standardcondnotneeded}.}
			\label{figexamplestandardcondnotneeded}
\end{figure}

Using arguments similar to those in the proceeding examples we can find, for each $\epsilon>0$, a global Markovian randomized $\epsilon$-Nash equilibrium corresponding to 
$\tau_{\Psi^{(1)}}=\inf\{t \geq: X_s \in\mathbb{R}\backslash (-1,1)\}$
and $\tau_{\Psi^{(2)}}$ given by 
\begin{align*}
\Psi_t^{(2)} = \int_0^t \gamma^{(2)}_\epsilon(X_s)  \mathbb{I}{ \{X_s \in (-1,0)\}}ds + \infty \mathbb{I}\{\tau^{\mathbb{R}\backslash (-1,0)} \leq t \}
\end{align*}
for some sufficiently large function $\gamma^{(2)}_\epsilon(\cdot)$. 
Note that the corresponding equilibrium value is $V=g$, and that it cannot be attained in pure stopping times since if Player $2$ would stop in a pure way for $x\in (-1,0)$, 
then Player $1$ would deviate to obtain $h(x)>g(x)$. Moreover, it is directly verified that neither condition (i) nor (ii) of Theorem \ref{thm:NE-conditions2} holds. 
Indeed condition (ii) does not hold since ${B^{h<g<f}}\cup {B^{h<f\leq g}} = \emptyset$,  
and condition (ii) does not hold since ${B^{g<f<h}}=\emptyset$ and
\begin{align*}
\sup_{\tau < \tau^{({B^{f\leq g<h}})^c}}\mathbb{E}_{x}(e^{-r\tau}f(X_\tau)) = f (x) < g(x), \enskip x \in {B^{f\leq g<h}}=(-1,0),
\end{align*}
where the equality is verified using standard optimal stopping theory, and the inequality holds by the problem formulation. 
\end{example}
 
\appendix
 
\section{Proofs for Section \ref{sec:assoc-game}} \label{proof-app-B}

\noindent \textbf{Proof of Proposition \ref{thm:assoc-game}.} (i) By considering the time-space process $(t,X_t)$ we can apply \cite[Theorem 2.1]{ekstrom2008optimal}  to establish the result. Since our model is diffusive it also holds that $V$ is a continuous function. The result $\tilde f \leq V \leq \tilde g$ follows immediately from the equilibrium definition; compare with the arguments in (ii) below. 

(ii) ${B^{g\leq h \leq f}} \cup {B^{h<g<f}} \cup {B^{g<f<h}} \subseteq B^{g \leq f}$ is observed at the end of the Section \ref{sec:assum}. Recalling that $\tilde f= \tilde h$,  we use the notation 
\begin{align}\label{hl2q4hkl2rnv}
\tilde J (x; \tau_1, \tau_2): = 
\E_x\bigg(e^{-r(\tau_1 \wedge \tau_2)}\bigg(\tilde f(X_{\tau_1})\mathbb{I}{\{\tau_1 \leq \tau_2\}}   + \tilde g(X_{\tau_2})  \mathbb{I}{\{\tau_1 > \tau_2\}}  \bigg) \bigg),
\end{align}
for the expected reward of the associated game. It is now directly verified that the equilibrium condition (cf. \eqref{def:SE}) for the associated game is satisfied (with $\epsilon=0$) for any 
$x \in B^{g \leq f}$ when both players stop immediately on $B^{g \leq f}$; to see this use that $\tilde f(x) = \tilde g(x)$ on $B^{g \leq f}$, cf. \eqref{assoc-game-def-fgh}. 
The value of the game $V$ is moreover unique (cf. Section \ref{sec:intro}). 
This implies that $V(x) = \tilde f(x) = \tilde g(x), x \in B^{g \leq f}$ and that $B^{g \leq f} \subseteq  D_i^{*}$. 
\hfill \qedsymbol{}\\

\noindent \textbf{Proof of Proposition \ref{asso-game:characterization}.} 
(i) This proof is similar to that of \cite[Theorem 3.1]{ekstrom2017dynkin}. 
Using 
optional sampling (\cite[p. 19]{Karatzas2}), $U(X_{\tau^{D_2}})\mathbb{I}{\{ \tau^{D_2} < \infty\}}=\tilde g(X_{\tau^{D_2}})\mathbb{I}{\{ \tau^{D_2} < \infty\}}$ and $e^{-r\tau} U(X_\tau):=0$ on $\{\tau=\infty\}$ a.s., and 
$\tilde g(x)\geq \tilde f(x)$ and $U(x)\geq \tilde f(x)$, we find, for each $\tau_1 \in \mathbb{T}_1$, that
\begin{align*}
U(x) 
& \geq  \E_x\bigg(e^{-r(\tau_1 \wedge \tau^{D_2})}U (X_{\tau_1 \wedge \tau^{D_2}})  \bigg)\\
& = \E_x\bigg(e^{-r(\tau_1 \wedge \tau^{D_2})} \bigg(     U (X_{\tau_1 \wedge \tau^{D_2}} )\mathbb{I}{\{\tau^{D_2}<\infty\}}   + U(X_{\tau_1 })\mathbb{I}{\{\tau^{D_2}=\infty\}}     \bigg)  \bigg)\\
& \geq \E_x\bigg(e^{-r(\tau_1 \wedge \tau^{D_2})}\bigg(\tilde f(X_{\tau_1})\mathbb{I}{\{\tau_1 \leq \tau^{D_2}\}}   + \tilde g(X_{\tau^{D_2}})  \mathbb{I}{\{ \tau_1 > \tau^{D_2}\}}   \bigg)\mathbb{I}{\{\tau^{D_2}<\infty\}} \bigg)\\
& \enskip + \E_x\bigg(e^{-r(\tau_1 \wedge \tau^{D_2})}   \tilde f(X_{\tau_1} )\mathbb{I}{\{\tau_1<\infty\}} \mathbb{I}{\{\tau^{D_2}=\infty\}} \bigg)\\
& \geq \E_x\bigg(e^{-r(\tau_1 \wedge \tau^{D_2})}\bigg(\tilde f(X_{\tau_1})\mathbb{I}{\{\tau_1\leq \tau^{D_2}\}}   + \tilde g(X_{\tau^{D_2}})  \mathbb{I}{\{\tau_1>\tau^{D_2}\}}    \bigg) \bigg)\\
& = \tilde J (x; \tau_1, \tau^{D_2}).
\end{align*}
Hence, 
$
U(x) \geq \sup_{\tau_1 \in \mathbb{T}_1}\tilde J (x; \tau_1, \tau^{D_2}).
$
We similarly have
\begin{align*}
U(x) 
& \leq  \E_x\bigg(e^{-r(\tau_2 \wedge \tau^{D_1})}U (X_{\tau_2 \wedge \tau^{D_1}})  \bigg)\\
& = \E_x\bigg(e^{-r(\tau_2 \wedge \tau^{D_1})} \bigg(     U (X_{\tau_2 \wedge \tau^{D_1}} )\mathbb{I}{\{\tau^{D_1}<\infty\}}   + U(X_{\tau_2 })\mathbb{I}{\{\tau^{D_1}=\infty\}}     \bigg)  \bigg)\\
& \leq \E_x\bigg(e^{-r(\tau_2 \wedge \tau^{D_1})}\bigg(\tilde g(X_{\tau_2})\mathbb{I}{\{\tau_2 < \tau^{D_1}\}}   + \tilde f(X_{\tau^{D_1}})  \mathbb{I}{\{ \tau_2 \geq  \tau^{D_1}\}}   \bigg)\mathbb{I}{\{\tau^{D_1}<\infty\}} \bigg)\\
& \enskip + \E_x\bigg(e^{-r(\tau_2 \wedge \tau^{D_1})}   \tilde g(X_{\tau_2} )\mathbb{I}{\{\tau_2<\infty\}} \mathbb{I}{\{\tau^{D_1}=\infty\}} \bigg)\\
& \leq \E_x\bigg(e^{-r(\tau_2 \wedge \tau^{D_1})}\bigg(\tilde f(X_{\tau^{D_1}})\mathbb{I}{\{\tau^{D_1}\leq \tau_2\}}   + \tilde g(X_{\tau_2})  \mathbb{I}{\{\tau_2<\tau^{D_1}\}}    \bigg) \bigg)\\
& = \tilde J (x; \tau^{D_1},\tau_2).
\end{align*}
Hence,
$
U(x) \leq \inf_{\tau_2 \in \mathbb{T}_2} \tilde J (x; \tau^{D_1}, \tau_2).
$
These results imply that $U(x) \geq \tilde J (x; \tau^{D_1}, \tau^{D_2})$ and $U(x) \leq \tilde J (x; \tau^{D_1}, \tau^{D_2})$, and we may hence conclude that 
\begin{align*}
\sup_{\tau_1 \in \mathbb{T}_1}\tilde J (x; \tau_1, \tau^{D_2}) \leq U(x) = \tilde J (x; \tau^{D_1}, \tau^{D_2})\leq \inf_{\tau_2 \in \mathbb{T}_2} \tilde J (x; \tau^{D_1}, \tau_2).
\end{align*}
The result follows when comparing with Definition \ref{def:equilibrium-defs} taking uniqueness into account.

(ii) This follows from Proposition \ref{thm:assoc-game}, \eqref{assoc-game-def-fgh}, and the definitions for $B_i$ (Section \ref{sec:assum}). 
\hfill \qedsymbol{}

\section{Equilibrium construction in the general case} \label{appendix-construction}
We will here construct the remaining components of the randomized stopping times of Theorem \ref{thm:construction}, i.e., $\gamma^{(i)}_\epsilon(\cdot)$ and $\Gamma^{(i)}_\epsilon(\cdot)$, $i=1,2$. 
We remark that our construction is not unique but it rather corresponds to one way of constructing these functions.  
Note, however, that this is as expected, in view of the non-uniqueness result Corollary \ref{Equilibrium-uniqueness}(i). We already here state the following result which the subsequent analysis relies on.

\begin{lemma} \label{lemma:yiuawehjlqw}
The points in $\partial(D_1^{*}\cap ({B^{h<g<f}} \cup {B^{h<f\leq g}}))$ are separated. 
This also holds for $\partial(D_2^{*}\cap ({B^{g<f<h}} \cup {B^{f\leq g<h}}))$. 
\end{lemma}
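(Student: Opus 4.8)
The plan is to deduce the separation claim from the standing assumptions by expressing the set $D_1^{*}\cap ({B^{h<g<f}} \cup {B^{h<f\leq g}})$ (and the analogous set for Player 2) in terms of sets whose boundaries are already known to be separated. First I would recall that, by Assumption \ref{assum:1}\eqref{assum:A2}, the collection $\cup_i\partial(B_i)$ has only finitely many points in any compact subset of ${\cal I}$; in particular $\partial({B^{h<g<f}} \cup {B^{h<f\leq g}})$ is separated, since $\partial$ of a finite union is contained in the union of the $\partial B_i$'s. So the only thing that can go wrong near a point of $D_1^{*}\cap ({B^{h<g<f}} \cup {B^{h<f\leq g}})$ is an accumulation of boundary points coming from $\partial D_1^{*}$ inside the open set ${\rm int}({B^{h<g<f}} \cup {B^{h<f\leq g}})$.

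Next I would analyze $D_1^{*}$ on that region. Recall $D_1^{*}=\{x: \tilde f(x)=V(x)\}$ from \eqref{def-D1-D2-sets-tilde}, and on $B^{f\leq g}$ we have $\tilde f=f$, while by Proposition \ref{asso-game:characterization}(i) the process $e^{-r(t\wedge \tau^{D_2^{*}})}V(X_{t\wedge \tau^{D_2^{*}}})$ is a supermartingale, so on the interior of $B^{f\leq g}$ the function $V$ solves the obstacle problem $\min\{(\mathbb{L}_X-r)V; g-V\}=0$ where $V>f$, and $V$ agrees with $f$ on $D_1^{*}$. On ${\rm int}({B^{h<g<f}} \cup {B^{h<f\leq g}})\subseteq B^{f<g}$ the upper obstacle $g$ is not active (since $f<g$ and $V\le g$ with $V=f$ on $D_1^*$ forces $V<g$ there on a neighbourhood of $D_1^*$), so on a small interval around a point of $D_1^*$ inside this region, $V$ is the value function of a pure (one-sided) optimal stopping problem with obstacle $f$. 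The key point is then that $\partial D_1^{*}$ restricted to ${\rm int}({B^{h<g<f}} \cup {B^{h<f\leq g}})$ cannot accumulate: this is where Assumption \ref{assum:2} enters. On an interval where $e^{-r t}f(X_t)$ is a submartingale (i.e.\ $(\mathbb{K}_X-r)f\ge 0$, which holds off ${\rm supp}((\mathbb{K}_X-r)f^{-})$) the stopping region for a maximizer's problem with obstacle $f$ is the whole interval, and on an interval where it is a strict supermartingale one does not stop; between these regimes there are only finitely many transitions in any compact set by Assumption \ref{assum:2}. Hence the boundary points of $D_1^*$ that lie in the interior of ${B^{h<g<f}} \cup {B^{h<f\leq g}}$ are contained in $\partial({\rm supp}((\mathbb{K}_X-r)f^{+}))\cup\partial({\rm supp}((\mathbb{K}_X-r)f^{-}))$ up to finitely many extra points, and this set is separated.

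Combining, $\partial(D_1^{*}\cap ({B^{h<g<f}} \cup {B^{h<f\leq g}}))$ is contained in the union of the separated sets $\partial({B^{h<g<f}} \cup {B^{h<f\leq g}})$, $\partial({\rm supp}((\mathbb{K}_X-r)f^{+}))$, $\partial({\rm supp}((\mathbb{K}_X-r)f^{-}))$, plus finitely-many-per-compact extra points, and a finite union of separated sets is separated; so it is separated. The argument for $\partial(D_2^{*}\cap ({B^{g<f<h}} \cup {B^{f\leq g<h}}))$ is identical with the roles of $f,g$ and of maximizer/minimizer interchanged, using the $g$-parts of Assumption \ref{assum:2}. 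The main obstacle I anticipate is making rigorous the claim that $\partial D_1^{*}$ cannot accumulate inside ${\rm int}({B^{h<g<f}} \cup {B^{h<f\leq g}})$: one must rule out oscillatory behaviour of the free boundary of the obstacle problem, and the cleanest way is to localize so that $V$ coincides with the value of a one-obstacle stopping problem and then invoke the structure forced by the Itô--Tanaka decomposition \eqref{2q4124}, whose sign changes are governed precisely by the separated supports in Assumption \ref{assum:2}.
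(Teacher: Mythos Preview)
Your overall plan --- split into the contribution from $\partial(B^{h<g<f}\cup B^{h<f\le g})$ (handled by Assumption~\ref{assum:1}\eqref{assum:A2}) and the contribution from $\partial D_1^*$ in the interior (handled by Assumption~\ref{assum:2}) --- matches the paper. But the execution of the second part has real problems.

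First, a slip: $B^{h<g<f}\subseteq B^{g<f}$, not $B^{f<g}$, so your inclusion ${\rm int}(B^{h<g<f}\cup B^{h<f\le g})\subseteq B^{f<g}$ is false. The paper handles $B^{h<g<f}$ differently and much more simply: since $B^{h<g<f}\subseteq B^{g\le f}\subseteq D_1^*$ (Proposition~\ref{thm:assoc-game}), the set $\overline{B^{h<g<f}}$ lies entirely inside $D_1^*$, so the only points of $\partial D_1^*$ it can meet are in $\partial B^{h<g<f}$, which are separated. One is then reduced to $\partial D_1^*\cap\overline{B^{h<f\le g}}$, and the paper further splits off $D_2^*$ (where $D_1^*\cap D_2^*\cap B^{f\le g}=B^{f=g}$, again covered by Assumption~\ref{assum:1}) to land in $(\partial D_1^*)\cap\overline{B^{h<f\le g}}\cap (D_2^*)^c$.

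Second, and more seriously, your reasoning about sub/supermartingale regions and stopping sets is inverted, and the resulting containment claim is too strong. On $D_1^*\cap(D_2^*)^c$ we have $V=\tilde f=f$ and $e^{-r(t\wedge\tau^{D_2^*})}V$ is a supermartingale, so $f$ must be $r$-superharmonic there; hence $D_1^*\cap(D_2^*)^c$ lies in the \emph{complement} of ${\rm supp}\big(((\mathbb{K}_X-r)f)^+\big)$, not in the submartingale set. More importantly, boundary points of $D_1^*$ need not lie in $\partial({\rm supp}((\mathbb{K}_X-r)f)^\pm)$ at all: free boundaries of optimal stopping problems are determined globally (via smooth fit) and generically do not coincide with sign-change points of $(\mathbb{L}_X-r)f$. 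The paper does \emph{not} claim containment; it uses a pigeonhole argument: any interval $(a,b)$ with $a,b\in\partial D_1^*\cap(D_2^*)^c$ that contains a point of $(D_1^*)^c$ must contain at least one point of ${\rm supp}\big(((\mathbb{K}_X-r)f)^+\big)$ (otherwise $f$ would be $r$-superharmonic on $(a,b)$ and equal to $V$ at the endpoints, forcing $V=f$ there), while $a,b$ themselves lie outside that support. Hence each such $(a,b)$ contains a point of $\partial\big({\rm supp}(((\mathbb{K}_X-r)f)^+)\big)$, and since those are separated by Assumption~\ref{assum:2}, the intervals $(a,b)$ --- and thus the boundary points of $D_1^*$ --- cannot accumulate. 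Replacing your containment claim with this counting argument closes the gap.
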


\begin{proof}
We prove the first claim. The second claim can be proved analogously. Since ${B^{h<g<f}} \cup {B^{h<f\leq g}}$ is a union of separated intervals (cf. Assumption \ref{assum:1}), it suffices to prove that the points in 
$(\partial D_1^{*})\cap (\overline{B^{h<g<f}} \cup \overline{B^{h<f\leq g}})$  are separated. 
However, by Proposition \ref{asso-game:characterization} we have $\overline{B^{h<g<f}}\subseteq D_1^{*}$, implying that points in $(\partial D_1^{*}) \cap \overline{B^{h<g<f}}$ are in $\partial{B^{h<g<f}}$, which are separated (Assumption \ref{assum:1}). Hence, it suffices to show that the points in $(\partial D_1^{*}) \cap \overline{B^{h<f\leq g}}$ are separated. 

By Proposition \ref{asso-game:characterization} it also holds that $\overline{B^{h<f\leq g}}\cap D_1^{*}\cap D_2^{*}=B^{f=g} \subseteq D_1^*$  implying (with Assumption \ref{assum:1}) that points in $(\partial D_1^{*}) \cap \overline{B^{h<f\leq g}}\cap D_2^{*}$ are separated. Hence, it only remains to show that the points in $(\partial D_1^{*}) \cap \overline{B^{h<f\leq g}} \cap (D_2^{*})^c$ are separated.

Consider an arbitrary interval $(a,b)$ with $a,b\in \partial D_1^{*} \cap (D_2^{*})^c$, where there is a point $x\in (a,b)$ such that $x \in (D_1^{*})^c$.  Recall that $D_i^*$ is an optimal stopping set for Player $i=1,2$ (in the auxiliary game, cf. Section \ref{sec:assoc-game}). Hence, by general optimal stopping theory, we have $a,b \in (\mbox{\textnormal{supp}}(\left( (\mathbb{K}_X-r)f \right)^{+}))^c$ and that $(a,b)$ contains at least one point from $(\mbox{\textnormal{supp}}(\left( (\mathbb{K}_X-r)f \right)^{+}))$. It thus holds that each interval of the type $(a,b)$ with $a,b\in \partial D_1^{*} \cap (D_2^{*})^c$ (with a point $x\in (a,b)$ such that $x \in (D_1^{*})^c$) contains at least one point from $\partial (\mbox{\textnormal{supp}}(\left( (\mathbb{K}_X-r)f \right)^{+}))$, which are separated (Assumption \ref{assum:2}). This implies that all boundary points $a,b$ as considered above of connected components of the continuation set $(D_1^{*})^c$ are separated. 
Note that all boundary points of connected components of the continuation set, are also boundary points of stopping sets, which concludes the argument.  

\end{proof}

In the constructions we will repeatedly use the fact that there exists, for each $X_0=x \in {\cal I}$ and each fixed $d(x)>0$, a non-zero stopping time $\tau_{x,d(x)}$ such that for each stopping time 
$\tau \leq \tau_{x,d(x)}$, it holds, a.s., that 
\begin{align}\label{q24gdab}
\begin{split}
f(x)- d(x)& \leq f\left(X_\tau \right) \leq f(x)+d(x),\\
g(x)- d(x)& \leq g\left(X_\tau \right) \leq g(x)+d(x).
\end{split}
\end{align}
Indeed we attain this with $\tau_{x,d(x)}:=\inf\{t \geq 0: X_t \notin [\ubar{x}(x),\bar{x}(x)]\}$ where $\ubar{x}(x)$ and $\bar{x}(x)$ are selected so that:
\begin{itemize} 
\item $\ubar{x}(x),\bar{x}(x)\in \mathcal{I}$ and $x \in (\ubar{x}(x),\bar{x}(x))$, implying that $\tau_{x,d(x)}>0$ a.s. 
\item $\sup_{y\in[\ubar{x}(x),\bar{x}(x)]} f(y) \leq f(x)+d(x)$ and $\sup_{y\in[\ubar{x}(x),\bar{x}(x)]} g(y) \leq g(x)+d(x)$
\item $\inf_{y\in[\ubar{x}(x),\bar{x}(x)]} f(y) \geq f(x)-d(x)$ and $\inf_{y\in[\ubar{x}(x),\bar{x}(x)]} g(y) \geq g(x)-d(x)$. 
\end{itemize}
One can directly see that selecting $\ubar{x}(x)$ and $\bar{x}(x)$ so that these conditions are satisfied is possible, using only the continuity of $f$, $g$ and the paths of $(X_t)$.

\begin{construction}[Equilibrium construction for Player $2$] \label{construction1}
In order for the equilibrium stopping time of Player $2$ to be fully specified we have to define $\gamma_\epsilon^{(2)} $ and $\Gamma^{(2)}_\epsilon$ in \eqref{Ne-stretegy2}. 

We begin by defining
(i) $\gamma_\epsilon^{(2)}:{\cal I}\rightarrow [0,\infty)$ as a measurable locally bounded function that takes the value zero outside of $D_2^{*}\cap({B^{g<f<h}} \cup {B^{f\leq g<h}})  \backslash \mbox{iso}(D_2^{*}\cap({B^{g<f<h}} \cup {B^{f\leq g<h}}))$, and 
(ii) $\Gamma^{(2)}_\epsilon:\mbox{iso}(D_2^{*}\cap({B^{g<f<h}} \cup {B^{f\leq g<h}}))\rightarrow [0,\infty)$.

$\bullet$ Definition of $\gamma_\epsilon^{(2)}$ and $\Gamma^{(2)}_\epsilon$ on 
$D_1^{*} \cap D_2^{*} \cap ({B^{g<f<h}} \cup {B^{f\leq g<h}})$. Here define 
\begin{align*}
d(x)= 
\begin{cases}
			\epsilon/4, &  \mbox{ if } f(x)+\epsilon/4 \geq 0\\
			\epsilon/8, &  \mbox{ if } f(x)+\epsilon/4 <0,
\end{cases}
\end{align*}
and
\begin{align*}
k(x)= 
\begin{cases}
			\infty, &  \mbox{ if } f(x)+\epsilon/4 \geq 0\\
			\frac{1}{r}\log\left(\frac{f(x)+\epsilon/8}{f(x)+\epsilon/4}\right), &  \mbox{ if } f(x)+\epsilon/4 <0,
\end{cases}
\end{align*}
so that $k(x)>0$, and $\left(f(x)+\epsilon/8\right)e^{-rk(x)}= f(x)+\epsilon/4$ whenever $f(x)+\epsilon/4<0$. This implies that 
\begin{align}\label{q24gdab2}
\max\left(f(x) + d(x); e^{-rk(x)}(f(x) + d(x))\right)=f(x)+\epsilon/4.
\end{align}

\begin{enumerate}[(i)]
\item Consider any specific 
$x \in \mbox{iso}(D_1^{*} \cap D_2^{*}\cap ({B^{g<f<h}} \cup {B^{f\leq g<h}}))$. Let $\tau:=\inf\{t \geq 0: \Gamma_\epsilon^{(2)}(x)l_t^x\geq E_2\}$ (which we note is a stopping time of the kind \eqref{def:admissST-eq}--\eqref{eq:psi-strat}, that depends directly on $X_0=x$). Relying on this notation for $\tau$ we set  $\Gamma_\epsilon^{(2)}(x)<\infty$ to be a value such that the condition
\begin{align}\label{asfqer32r4}
\mathbb{P}_x\left(\tau > \tau_{x,d(x)}\wedge k(x)\right) \left(\max_{y\in [\ubar{x}(x),\bar{x}(x)]}M(y) - f(x) - \epsilon/4\right) \leq \epsilon/4,
\end{align}
is satisfied (here $M(x)$ is the continuous and therefore in particular locally bounded function defined in \eqref{assum:integrability} and $\tau_{x,d(x)}$ is defined in the introduction of this appendix); to see that this is possible observe that $\mathbb{P}_x\left(\tau > \tau_{x,d(x)}\wedge k(x)\right)\searrow 0$ as $\Gamma_\epsilon^{(2)}(x)\rightarrow \infty$.

\item Consider the set $D_1^{*} \cap D_2^{*} \cap ({B^{g<f<h}} \cup {B^{f\leq g<h}}) \backslash \mbox{iso}(D_1^{*} \cap D_2^{*}\cap ({B^{g<f<h}} \cup {B^{f\leq g<h}}) )$. Here we re-define the stopping time $\tau$ according to $\tau:=\inf\{t \geq 0: \int_0^t \gamma_\epsilon^{(2)}(X_s)ds \geq E_2\}$ and relying on this definition we select the function $\gamma_\epsilon^{(2)}$ so that \eqref{asfqer32r4} holds for all $x$ in the considered set. A construction of such a function $\gamma_\epsilon^{(2)}$  is as follows: 
\begin{itemize}
\item[-] Represent the considered set as a countable union of bounded intervals $I_n$, where the measure of each interval is dominated by a strictly positive constant (cf. Lemma \ref{lemma:yiuawehjlqw}). 
\item[-] Consider the stopping times 
$\tau(n):=\inf\{t \geq 0: \int_0^t C_n\mathbb{I}\{X_s \in I_n\} ds \geq E_2\}$, where $C_n>0$. 
\item[-] Then the function $(x,C_n)\in \bar{I_n} \times(0,\infty)\mapsto p(x,C_n;\epsilon):=\mathbb{P}_x\left(\tau(n) > \tau_{x,d(x)}\wedge k(x)\right)$ is continuous and bounded, with $p(x,C_n;\epsilon) \searrow0$ as $C_n\rightarrow \infty$.  Hence, for any fixed $\epsilon>0$ there exists a constant $C_n>0$ such that \eqref{asfqer32r4} holds if we replace $\tau$ with $\tau(n)$, for each $x\in I_n$. 
\item[-] Now specify the function $\gamma_\epsilon^{(2)}$ so that it is continuous on each (bounded) $I_n$ with $\gamma_\epsilon^{(2)}(x) \geq C_n, x \in I_n$, for each $n$. 
\end{itemize}

\end{enumerate}
$\bullet$ Definition of $\gamma_\epsilon^{(2)}$ and $\Gamma^{(2)}_\epsilon$ on  $ (D_1^{*})^c \cap D_2^{*}\cap {B^{f\leq g<h}}$ (which we note equals 
$(D_1^{*})^c\cap D_2^{*} \cap ({B^{g<f<h}} \cup {B^{f\leq g<h}})$).  
Define 
\begin{align*}
d_1(x)= 
\begin{cases}
			\epsilon/4, &  \mbox{ if } g(x)+\epsilon/4 \geq 0\\
			\epsilon/8, &  \mbox{ if } g(x)+\epsilon/4 <0,
\end{cases}
\end{align*}
and 
\begin{align*}
k_1(x)= 
\begin{cases}
			\infty, &  \mbox{ if } g(x)+\epsilon/4 \geq 0\\
			\frac{1}{r}\log\left(\frac{g(x)+\epsilon/8}{g(x)+\epsilon/4}\right), &  \mbox{ if } g(x)+\epsilon/4 <0,
\end{cases}
\end{align*}
so that
\begin{align*}
\max\left(g(x) + d_1(x); e^{-rk_1(x)}(g(x) + d_1(x))\right)= g(x)+\epsilon/4.
\end{align*}
Define 
\begin{align*}
d_2(x)= 
\begin{cases}
			\epsilon/4, &  \mbox{ if } g(x)-\epsilon/4 \leq 0\\
			\epsilon/8, &  \mbox{ if } g(x)-\epsilon/4 >0,
\end{cases}
\end{align*}
and
\begin{align*}
k_2(x)= 
\begin{cases}
			\infty, &  \mbox{ if } g(x)-\epsilon/4 \leq 0\\
			\frac{1}{r}\log\left(\frac{g(x)-\epsilon/8}{g(x)-\epsilon/4}\right), &  \mbox{ if } g(x)-\epsilon/4 >0,
\end{cases}
\end{align*}
so that
\begin{align*}
\min\left(e^{-rk_2(x)}(g(x) - d_2(x));g(x) - d_2(x)\right)= g(x)-\epsilon/4.
\end{align*}

\begin{enumerate}[(i)]
\item Consider any specific $x \in \mbox{iso}(  (D_1^{*})^c \cap D_2^{*} \cap  {B^{f\leq g<h}} )$. Let 
$\tau:=\inf\{t\geq 0: \Gamma_\epsilon^{(2)}(x)l_t^x\geq E_2\}$ 
and set $\Gamma_\epsilon^{(2)}(x)<\infty$ to a value so that 
\begin{align}\label{qe21q3edaDD}
\begin{split}
\mathbb{P}_x\left(\tau > \tau_{x,d_1(x)}\wedge k_1(x) \right)  \left(\max_{y\in [\ubar{x}(x),\bar{x}(x)]}M(y) - g(x) - \epsilon/4\right)&\leq \epsilon/4\\
\mathbb{P}_x\left(\tau > \tau_{x,d_2(x)} \wedge k_2(x) \wedge \frac{1}{2}\inf\{t: X_t \in D_1^{*}\}\right) \left(\min_{y\in [\ubar{x}(x),\bar{x}(x)]}(-M(y))- (g(x) - \epsilon/4)\right) &\geq -\epsilon/4.
\end{split}
\end{align}

\item Consider the set $(D_1^{*})^c \cap D_2^{*}\cap {B^{f\leq g<h}}  \backslash \mbox{iso}( (D_1^{*})^c \cap D_2^{*}\cap {B^{f\leq g<h}})$. Here let 
$\tau:=\inf\{t\geq 0: \int_0^t \gamma_\epsilon^{(2)}(X_s)ds\geq E_2\}$ and define the function $\gamma_\epsilon^{(2)}$ so that \eqref{qe21q3edaDD} holds for all $x$ in the considered set. This can be done analogously to the construction of $\gamma_\epsilon^{(2)}$ on  $D_1^{*} \cap D_2^{*} \cap ({B^{g<f<h}} \cup {B^{f\leq g<h}}) \backslash \mbox{iso}(D_1^{*} \cap D_2^{*}\cap ({B^{g<f<h}} \cup {B^{f\leq g<h}}))$ (above). 
\end{enumerate}
 
\end{construction}

Note that our game is anti-symmetric in the sense that Player $2$ wants to minimize a reward that Player $1$ wants to maximize. For a suitable re-specification of the payoff functions we can thus essentially have Player $1$ being the minimizer and Player $2$ being the maximizer without changing anything else in the framework. This allows us to describe the equilibrium construction for Player $1$ by a reference to the equilibrium construction for Player $2$. The detailed argument is provided here: 

\begin{construction}[Equilibrium construction for Player $1$] \label{construction2} 
It is clear that our game---where Player $1$ is a maximizer and Player $2$ is a minimizer for \eqref{cost-function}---is equivalent to a game of the same type where Player $2$ is the maximizer and Player $1$ is the minimizer for the reward
\begin{align*}
-\E_x\left(e^{-r(\tau_1 \wedge \tau_2)}\left(f(X_{\tau_1})\mathbb{I}{\{\tau_1<\tau_2\}} + g(X_{\tau_2})\mathbb{I}{\{\tau_1>\tau_2\}} + h(X_{\tau_1})\mathbb{I}{\{\tau_1=\tau_2\}}\right)\right).
\end{align*}
Note that this corresponds to Player $2$ (who is the maximizer in this alternative formulation) 
receiving $-g$ if stopping first,
receiving $-f$ if not stopping first, and
receiving $-h$ if stopping at the same time as Player $1$. 
 
Relying on these observations it is clear that in order to construct $\gamma_\epsilon^{(1)}$ and $\Gamma^{(1)}_\epsilon$ in \eqref{Ne-stretegy2} for the equilibrium stopping time for Player $1$, one can simply follow the steps of Construction \ref{construction1} and replacing
$g$ with $-f$,
$f$ with $-g$,
$D_2^*$ with $D_1^*$,
$D_1^*$ with $D_2^*$, 
${B^{f\leq g<h}}$ with ${B^{h<f\leq g}}$, and
${B^{g<f<h}}$ with ${B^{h<g<f}}$. 
\end{construction}

\section{Proofs for Section \ref{sec:NE-exist}} \label{proof-app-C}

The proof of Theorem \ref{thm:construction} relies crucially on the following result which says that the continuous additive functionals of the equilibrium stopping times do not blow up and that simultaneous stopping is possible only when both players stop with certainty.

\begin{proposition}\label{prop:simul-stopping} Let $\Psi_t^{(i)},A_t^{(i)},i=1,2$ be as in Theorem \ref{thm:construction}. 
\begin{enumerate}[(i)]
\item It holds that $A_t^{(1)},A_t^{(2)}<\infty$ for  $t<\infty$, $\mathbb{P}_x$-a.s. 
\item Consider an arbitrary fixed $\tau_1 \in \mathbb{T}_1$. If $\tau_1(\omega)=\tau_{\Psi^{(2)}}(\omega)<\infty$ then $X_{\tau_1}(\omega)\in \overline{D_2^* \backslash ({B^{g<f<h}}\cup {B^{f\leq g<h}})}$, $\mathbb{P}_x$-a.s. Similarly, consider an arbitrary fixed $\tau_2 \in \mathbb{T}_2$. If $\tau_2(\omega)=\tau_{\Psi^{(1)}}(\omega)<\infty$ then $X_{\tau_2}(\omega)\in \overline{D_1^* \backslash ({B^{h<g<f}}\cup {B^{h<f\leq g}})}$, $\mathbb{P}_x$-a.s. 
\end{enumerate}
\end{proposition}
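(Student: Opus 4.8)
The plan is to prove the two assertions separately, though by the anti-symmetry of the game (Construction \ref{construction2}) it suffices to prove the claims for Player $2$'s stopping time $\tau_{\Psi^{(2)}}$; the statements for $\tau_{\Psi^{(1)}}$ follow by interchanging the roles of the players and replacing $(f,g,h,D_1^*,D_2^*)$ by $(-g,-f,-h,D_2^*,D_1^*)$ as in Construction \ref{construction2}. So I will set up all arguments for $A_t^{(2)}$ and $\Psi_t^{(2)}$.

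\textbf{Part (i): no blow-up.} Recall from \eqref{Ne-stretegy2} that
\[
A_t^{(2)}=\int_0^t \gamma^{(2)}_\epsilon(X_s)\,\mathbb{I}\{X_s\in D_2^*\cap(B^{g<f<h}\cup B^{f\leq g<h})\}\,ds+\sum_{y\in\mbox{iso}(D_2^*\cap(B^{g<f<h}\cup B^{f\leq g<h}))}\Gamma_\epsilon^{2}(y)\,l_t^y .
\]
First I would handle the integral term: by construction $\gamma^{(2)}_\epsilon$ is a \emph{locally bounded} measurable function, so for every compact $K\subseteq{\cal I}$ there is $C_K<\infty$ with $\gamma^{(2)}_\epsilon\le C_K$ on $K$; since the diffusion stays in a random compact set on any finite time horizon (by continuity of paths), $\int_0^t\gamma^{(2)}_\epsilon(X_s)\,\mathbb{I}\{\cdots\}ds\le C_{K_t(\omega)}\cdot t<\infty$ a.s. For the local-time (pushes) term, I would use two facts: first, by Lemma \ref{lemma:yiuawehjlqw} the set $\partial(D_2^*\cap(B^{g<f<h}\cup B^{f\leq g<h}))$ is separated, hence its isolated points have no accumulation point, so on any finite time horizon $X$ visits only finitely many of the points $y$; second, for a fixed single point $y$, $l_t^y<\infty$ a.s.\ for $t<\infty$ (the local time of a one-dimensional regular diffusion at a point is finite on finite horizons). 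Combining: on $\{t<\infty\}$ only finitely many summands are nonzero and each is finite, so the sum is finite a.s.

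\textbf{Part (ii): simultaneous stopping forces certain stopping.} Fix $\tau_1\in\mathbb{T}_1$ and work on the event $\{\tau_1=\tau_{\Psi^{(2)}}<\infty\}$. By Definition \ref{def:admiss-ST}, $\tau_{\Psi^{(2)}}=\inf\{t\ge0:\Psi_t^{(2)}\ge E_2\}$ with $\Psi_t^{(2)}=A_t^{(2)}+\infty\,\mathbb{I}\{\tau^{D_2^*\backslash(B^{g<f<h}\cup B^{f\leq g<h})}\le t\}$. Write $\tau^{\mathrm{pure}}:=\tau^{D_2^*\backslash(B^{g<f<h}\cup B^{f\leq g<h})}$. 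The key dichotomy is: either $\tau_{\Psi^{(2)}}=\tau^{\mathrm{pure}}$ (the ``$\infty$-jump'' triggers the stopping), in which case $X_{\tau_{\Psi^{(2)}}}\in\overline{D_2^*\backslash(B^{g<f<h}\cup B^{f\leq g<h})}$ by right-continuity of $X$ and closedness considerations, and we are done; or $\tau_{\Psi^{(2)}}<\tau^{\mathrm{pure}}$, i.e.\ $\tau_{\Psi^{(2)}}$ is triggered by the continuous part $A^{(2)}$ reaching the independent exponential level $E_2$. In the latter case I must derive a contradiction with $\tau_1=\tau_{\Psi^{(2)}}$. The point is that $A^{(2)}$ is a \emph{continuous} additive functional and $E_2$ is an $Exp(1)$ variable independent of $X$ (hence of $\tau_1$, which is $\mathcal F^X$-measurable up to its own independent randomization $E_1$ that is also independent of $E_2$); so the conditional law of $\tau_{\Psi^{(2)}}$ given $\mathcal F^X$ and $E_1$ is atomless on the set where it is governed by $A^{(2)}$ reaching $E_2$ (since $A^{(2)}$ is continuous and nondecreasing, $\mathbb{P}(A^{(2)}_{\tau_1}=E_2\mid\mathcal F^X,E_1)=0$ because $E_2$ has a density). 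Therefore $\mathbb{P}(\tau_1=\tau_{\Psi^{(2)}}<\tau^{\mathrm{pure}})=0$. Consequently, a.s.\ on $\{\tau_1=\tau_{\Psi^{(2)}}<\infty\}$ we must have $\tau_{\Psi^{(2)}}=\tau^{\mathrm{pure}}$, giving $X_{\tau_1}\in\overline{D_2^*\backslash(B^{g<f<h}\cup B^{f\leq g<h})}$.

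\textbf{Main obstacle.} I expect the delicate point to be the measure-theoretic argument in Part (ii): making precise that, on the event where stopping is governed by the continuous functional $A^{(2)}$ hitting $E_2$, the stopping time has no atoms at any $\mathcal F^X\vee\sigma(E_1)$-measurable time $\tau_1$. One has to be careful because $A^{(2)}$ itself depends on $X$ (so $A^{(2)}_{\tau_1}$ is $\mathcal F^X$-measurable after conditioning), but $E_2$ is independent of everything, which is exactly why the conditional distribution of the event $\{A^{(2)}_{\tau_1}=E_2\}$ is null — this is the standard ``randomized stopping time has continuous conditional law'' fact, but it must be invoked cleanly, separating the contribution of the $\infty$-jump (which \emph{can} create an atom, but only on the pure stopping set $D_2^*\backslash(B^{g<f<h}\cup B^{f\leq g<h})$) from the contribution of $A^{(2)}$. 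A secondary technical point is justifying $X_{\tau^{\mathrm{pure}}}\in\overline{D_2^*\backslash(B^{g<f<h}\cup B^{f\leq g<h})}$ on $\{\tau^{\mathrm{pure}}<\infty\}$, which follows from right-continuity of paths together with the fact that the entry time into a set and into its closure coincide for a regular diffusion (up to null sets), so the state at the entry time lies in the closure.
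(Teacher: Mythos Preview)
Your proposal is correct and essentially self-contained, whereas the paper's proof consists of a single-line citation to \cite[Lemma 4.4]{christensen2024existence}. The argument you give---local boundedness of $\gamma^{(2)}_\epsilon$ plus separation of the isolated points (via Lemma \ref{lemma:yiuawehjlqw}) for Part (i), and the observation that on $\{\tau_{\Psi^{(2)}}<\tau^{\mathrm{pure}}\}$ continuity of $A^{(2)}$ forces $A^{(2)}_{\tau_1}=E_2$, an event of probability zero by independence and absolute continuity of $E_2$, for Part (ii)---is precisely the content one would expect that external lemma to encapsulate. So the route is not genuinely different, just spelled out rather than outsourced; the advantage of your version is that it makes the paper self-contained and transparently identifies where each structural assumption (local boundedness of $\gamma_\epsilon^{(i)}$, separation of boundary points, continuity of the additive functional, independence of $E_1,E_2$ from $\mathcal F^X$ and from each other) is actually used.
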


\begin{proof}
This follows from \cite[Lemma 4.4]{christensen2024existence}. 
\end{proof}

\noindent\textbf{Proof of Theorem \ref{thm:construction}.} For ease of exposition we write in this proof the proposed equilibrium stopping time pair in the theorem statement as $(\tau^*_1,\tau^*_2):=(\tau_{\Psi^{(1)}},\tau_{\Psi^{(2)}})$. 

Fix an arbitrary $\epsilon>0$. In the items below we verify that the equilibrium condition for Player $1$ holds for the proposed equilibrium stopping time pair; i.e., we prove that the first inequality in the equilibrium condition \eqref{def:SE} holds. In view of Construction \ref{construction2} it is clear that the corresponding result for Player $2$ also holds. This corresponds to the second part of Theorem \ref{thm:construction}(ii). 

From the items below it also follows that the equilibrium value $J(\cdot;\tau^*_1,\tau^*_2)$ coincides with the value of the associated game $V$ in Proposition \ref{asso-game:characterization} (in applicable cases $V(x)$ is obtained by sending $\epsilon \searrow 0$). This corresponds to the remaining statements of Theorem \ref{thm:construction}.

Equation \eqref{asdasdDd} and Proposition \ref{prop:simul-stopping} are used repeatedly.

\begin{enumerate}

\item Consider an arbitrary point $x \in {B^{g\leq h \leq f}}$. The proposed equilibrium, i.e., $(\tau_1^*,\tau^*_2)$, corresponds to both players stopping immediately. The value corresponding to the proposed equilibrium is therefore $J(x;\tau^*_1,\tau^*_2)=h(x)$. 
If Player $1$ deviates from $(\tau_1^*,\tau^*_2)$ by using a different stopping time $\tau_1 \in \mathbb{T}_1$, 
then
either 
$J(x;\tau_1,\tau^*_2)=h(x)$ 
or $J(x;\tau_1,\tau^*_2)=g(x)\leq h(x)$; use the definition of ${B^{g\leq h \leq f}}$ to see this. 
Hence, the first part of \eqref{def:SE} holds, for each $\epsilon  \geq 0$.

\item Consider $x \in {B^{h<g<f}}$. The proposed equilibrium corresponds to Player $2$ stopping immediately, while Player $1$ randomizes. 
The corresponding value is $J(x;\tau^*_1,\tau^*_2)=g(x)$. 
If Player $1$ deviates by using a different stopping time $\tau_1 \in \mathbb{T}_1$, 
then 
either $J(x;\tau_1,\tau^*_2)=g(x)$ 
or $J(x;\tau_1,\tau^*_2)=h(x)<g(x)$; use the definition of ${B^{h<g<f}}$ to see this. 
Hence the first part of \eqref{def:SE} holds, for each $\epsilon  \geq 0$.

\item In the items below we consider the points $x \in {B^{f \lesssim h \lesssim g}} \cup {B^{f\leq g<h}} \cup {B^{h<f\leq g}} \subseteq B^{f\leq g}$.  We rely on the observation that
$$x \in ({B^{f \lesssim h \lesssim g}} \cup {B^{f\leq g<h}} \cup {B^{h<f\leq g}})  \cap D_1^{*} \cap D_2^{*} \Rightarrow f(x)=g(x).$$
To see that this holds use that $D_i^{*}$ are the stopping sets of the associated game, that 
$\tilde f = f = \tilde h \leq \tilde g = g$ on ${B^{f \lesssim h \lesssim g}} \cup {B^{f\leq g<h}} \cup {B^{h<f\leq g}}$ and basic observations inline with the items above; essentially, the observation is that simultaneous stopping, yielding $\tilde h(x)$ for the associated game, can, in case $f(x) \leq g(x)$, only occur if $f(x) = g(x)$. This implies also that 
\begin{align}\label{hwqejlkaw}
 {B^{f \lesssim h \lesssim g}}    \cap D_1^{*} \cap D_2^{*} = \emptyset.
\end{align}

$\bullet$ For $x \in   {B^{f \lesssim h \lesssim g}} \cap  D_2^{*} $. The proposed equilibrium says that Player $2$ should stop (without randomization) and that Player $1$ should not do so (cf. \eqref{Ne-stretegy}--\eqref{Ne-stretegy2} and \eqref{hwqejlkaw}). 
Hence, $J(x;\tau_1^*,\tau_2^*)=g(x)$.
If Player $1$ deviates by using another stopping time $\tau_1$ then either $J(x;\tau_1,\tau_2^*)=g(x)$ or $J(x;\tau_1,\tau_2^*)=h(x)\leq g(x)$, where the inequality holds by definition of ${B^{f \lesssim h \lesssim g}}$.
Hence, the first part of \eqref{def:SE} holds, for each $\epsilon  \geq 0$.

$\bullet$ For $x \in  {B^{h<f\leq g}} \cap D_2^{*}  $. The proposed equilibrium says that Player $2$ should stop (without randomization) and that Player $1$ should not do so (cf. \eqref{Ne-stretegy}--\eqref{Ne-stretegy2}). Hence, $J(x;\tau_1^*,\tau_2^*) = g(x)$. If Player $1$ deviates by using another stopping time $\tau_1$ then either $J(x;\tau_1,\tau_2^*)=g(x)$ or $J(x;\tau_1,\tau_2^*)=h(x)< g(x)$ (cf. definition of ${B^{h<f\leq g}}$). 
Hence, the first part of \eqref{def:SE} holds, for each $\epsilon  \geq 0$.

$\bullet$  For $x \in {B^{f\leq g<h}} \cap D_1^{*} \cap D_2^{*}  $. The proposed equilibrium says that Player $2$ should approximate stopping by using randomization and that Player $1$ should stop without randomization. Using this and that $x \in {B^{f\leq g<h}} \cap D_1^{*} \cap D_2^{*} \Rightarrow f(x)=g(x)$ we obtain $J(x;\tau_1^*,\tau^*_2) + \epsilon = g(x) + \epsilon = f(x) + \epsilon$. In order to show that the first part of \eqref{def:SE} holds, for a fixed $\epsilon>0$, it is enough to show that
\begin{align}\label{adqwefas}
J(x;\tau_1,\tau^*_2) \leq  J(x;\tau_1^*,\tau^*_2) + \epsilon/2 = g(x) + \epsilon/2 =  f(x) + \epsilon/2, \enskip \mbox{for all $\tau_1$}.
\end{align}
(Note that it would here suffice to show that \eqref{adqwefas} holds with $\epsilon/2$ replaced with $\epsilon$, and without mentioning the last equality; but we rely on the stronger result \eqref{adqwefas} later in the proof.)

Let us first establish that 
\begin{align}\label{234rasf}
\begin{split}
& J(x;\tau_1,\tau^*_2)\\
& := \E_x\left(e^{-r(\tau_1 \wedge \tau_2^*)}\left(f(X_{\tau_1})\mathbb{I}{\{\tau_1<\tau_2^*\}} + g(X_{\tau_2^*})\mathbb{I}{\{\tau_1> \tau_2^*\}} + h(X_{\tau_1})\mathbb{I}{\{\tau_1=\tau_2^*\}} \right)\right)\\
& = \E_x\left(e^{-r(\tau_1 \wedge \tau_2^*)}\left(f(X_{\tau_1})\mathbb{I}{\{\tau_1<\tau_2^*\}} + g(X_{\tau_2^*})\mathbb{I}{\{\tau_1 > \tau_2^*\}}+ h(X_{\tau_1})\mathbb{I}{\{\tau_1=\tau_2^*\}}\right) \mathbb{I}{\{\tau_2^* \leq \tau_{x,d(x)}\wedge k(x)\}} \right)\\
&\enskip + \E_x\left(e^{-r(\tau_1 \wedge \tau_2^*)}\left(f(X_{\tau_1})\mathbb{I}{\{\tau_1<\tau_2^*\}} + g(X_{\tau_2^*})\mathbb{I}{\{\tau_1 > \tau_2^*\}}+ h(X_{\tau_1})\mathbb{I}{\{\tau_1=\tau_2^*\}}\right) \mathbb{I}{\{\tau_2^* > \tau_{x,d(x)}} \wedge k(x)\} \right)\\
&\leq \mathbb{P}_x\left(\tau_2^* \leq \tau_{x,d(x)} \wedge k(x)\right) \max\left(f(x) + d(x); e^{-rk(x)}(f(x) + d(x));g(x) + d(x); e^{-rk(x)}(g(x) + d(x))\right) \\ 
&\enskip + \mathbb{P}_x\left(\tau_2^*> \tau_{x,d(x)} \wedge k(x) \right)\max_{y\in [\ubar{x}(x),\bar{x}(x)]}M(y)\\
& = \mathbb{P}_x\left(\tau_2^* \leq \tau_{x,d(x)} \wedge k(x)\right)\left(f(x) + \epsilon/4 \right)  + \mathbb{P}_x\left(\tau_2^*> \tau_{x,d(x)}\wedge k(x)\right)\max_{y\in [\ubar{x}(x),\bar{x}(x)]}M(y)\\
&=f (x) + \epsilon/4   +  \mathbb{P}_x\left(\tau_2^*> \tau_{x,d(x)}\wedge k(x)\right)\left(\max_{y\in [\ubar{x}(x),\bar{x}(x)]}M(y) - f(x) - \epsilon/4 \right).
\end{split}
\end{align}
The inequality above follows from the strong Markov property and the definition of $\tau_{x,d(x)}$ in Construction \ref{construction1} (see e.g., \eqref{q24gdab}) together with the fact that simultaneous stopping cannot not occur unless $h\leq f$ or $h\leq g$ (since Player $2$ randomizes on ${B^{g<f<h}} \cup {B^{f\leq g<h}}$). The equality after that follows from \eqref{q24gdab2} and $f(x)=g(x)$ (for the present $x$). The remaining steps are easily verified.

Relying on Construction \ref{construction1} and the observation that each stopping time $\tau$ there satisfies $\tau \geq  \tau_2^*$ a.s. (by construction of $\tau_2^*$) we obtain
\begin{align*}
\mathbb{P}_x\left(\tau_2^*> \tau_{x,d(x)}\wedge k(x)\right)\left( \max_{y\in [x[\ubar{x}(x),\bar{x}(x)]}M(y)- f(x) - \epsilon/4 \right) \leq \epsilon/4.
\end{align*}
This inequality and \eqref{234rasf} implies that \eqref{adqwefas} holds and we are done.

$\bullet$  For $x \in  {B^{f\leq g<h}} \cap (D_1^{*})^c  \cap D_2^{*}  $. The proposed equilibrium says that Player $2$ should approximate stopping using randomization and that Player $1$ should not stop and not randomize. In order to show that the first part of \eqref{def:SE} holds, for the fixed $\epsilon>0$, we should show that
\begin{align} \label{adqwefas2}
J(x;\tau_1,\tau^*_2) \leq  J(x;\tau_1^*,\tau^*_2) + \epsilon, \enskip \mbox{for all $\tau_1$}.
\end{align}
Using arguments similar to those that lead to \eqref{234rasf} (with $d_1(x)$ and $k_1(x)$ instead of $d(x)$ and $k(x)$), and that $f\leq g$ on ${B^{f\leq g<h}}$, it can be shown that 
\begin{align*}
J(x;\tau_1,\tau^*_2) \leq g(x) + \epsilon/2, \enskip \mbox{for all $\tau_1$}.
\end{align*}
Hence, if we can show that
\begin{align}\label{adqwefas3}
J(x;\tau^*_1,\tau^*_2) \geq g(x)- \epsilon/2,
\end{align}
then \eqref{adqwefas2} holds and we are done. It holds that
\begin{align*}
J(x;\tau_1^*,\tau^*_2)
&=\E_x\left(e^{-r(\tau_1^* \wedge \tau_2^*)}\left(f(X_{\tau_1^*})\mathbb{I}_{\{\tau_1^*<\tau_2^*\}} + g(X_{\tau_2^*})\mathbb{I}_{\{\tau_1^*> \tau_2^*\}} + h(X_{\tau_1^*})\mathbb{I}_{\{\tau_1^*=\tau_2^*\}} \right)\right)\\
&\geq \mathbb{P}_x\left(\tau_2^* \leq \tau_{x,d_2(x)} \wedge k_2(x) \wedge \frac{1}{2}\inf\{t: X_t \in D_1^{*}\}\right)\min(e^{-rk_2(x)}(g(x) - d_2(x));g(x) - d_2(x))\\ 
&\enskip + \mathbb{P}_x\left(\tau_2^* > \tau_{x,d_2(x)} \wedge k_2(x) \wedge \frac{1}{2}\inf\{t: X_t \in D_1^{*}\}\right) \min_{y\in [\ubar{x}(x),\bar{x}(x)]}(-M(y))\\
& = \mathbb{P}_x\left(\tau_2^* \leq \tau_{x,d_2(x)} \wedge k_2(x) \wedge \frac{1}{2}\inf\{t: X_t \in D_1^{*}\}\right)(g(x) - \epsilon/4)\\ 
&\enskip + \mathbb{P}_x\left(\tau_2^* > \tau_{x,d_2(x)} \wedge k_2(x) \wedge \frac{1}{2}\inf\{t: X_t \in D_1^{*}\}\right) \min_{y\in [\ubar{x}(x),\bar{x}(x)]}(-M(y))\\
& = g(x) - \epsilon/4 \\
&\enskip + \mathbb{P}_x\left(\tau_2^* > \tau_{x,d_2(x)} \wedge k_2(x) \wedge \frac{1}{2}\inf\{t: X_t \in D_1^{*}\}\right) \left(\min_{y\in [\ubar{x}(x),\bar{x}(x)]}(-M(y))- (g(x) - \epsilon/4)\right),
\end{align*}
which can be seen using arguments similar to those in the item above and the observations that $(D_1^{*})^c$ is an open set and that if stopping occurs before $\inf\{t: X_t \in D_1^{*}\}$ then the payoff must be given by $g$ (since Player $1$ does not stop and does not randomize on $(D_1^{*})^c$). 

Relying on Construction \ref{construction1}  and the observation that each stopping time $\tau$ there satisfies $\tau \geq  \tau_2^*$ a.s. we obtain
\begin{align*}
\mathbb{P}_x\left(\tau_2^* > \tau_{x,d_2(x)} \wedge k_2(x) \wedge \frac{1}{2}\inf\{t: X_t \in D_1^{*}\}\right) \left(\min_{y\in [\ubar{x}(x),\bar{x}(x)]}(-M(y))- (g(x) - \epsilon/4)\right) \geq -\epsilon/4.
\end{align*}
It follows that \eqref{adqwefas3} holds and we are done.

$\bullet$ For $x\in ({B^{f \lesssim h \lesssim g}} \cup {B^{f\leq g<h}} \cup {B^{h<f\leq g}}) \cap (D_2^{*})^c$. If we can show that the following inequality holds then we are done 
\begin{align} \label{afasfqwreqrqwrf}
\sup_{\tau_1 \in \mathbb{T}_1} J(x;\tau_1,\tau^*_2) - \epsilon \leq J(x;\tau^*_1,\tau^*_2).
\end{align}
Let $(a,b)$ be the largest open interval so that $x \in (a,b)$ and $(a,b) \subseteq (D_2^{*})^c$. 
Note that $(D_2^{*})^c \subseteq  {B^{f \lesssim h \lesssim g}} \cup {B^{f\leq g<h}} \cup {B^{h<f\leq g}}$. 
It follows that $a,b \in \partial D_2^{*} \cap \overline{{B^{f \lesssim h \lesssim g}} \cup {B^{f\leq g<h}} \cup {B^{h<f\leq g}}}$ in case $a$ and $b$, respectively are finite; 
whereas 
if $a=-\infty$ then $y \in \overline{{B^{f \lesssim h \lesssim g}} \cup {B^{f\leq g<h}} \cup {B^{h<f\leq g}}}$ for all $y\leq x$, and 
if $b=\infty$ then $y \in \overline{{B^{f \lesssim h \lesssim g}} \cup {B^{f\leq g<h}} \cup {B^{h<f\leq g}}}$ for all $y\geq x$. 

Recall that $\tau^{(a,b)^c}:= \inf\{t \geq 0: X_t \notin (a,b)\}$. Consider the associated game (Section \ref{sec:assoc-game}) and observe:

\begin{enumerate}[(i)]

\item For the present starting value $X_0=x$ it holds, keeping the definition of $\tilde \tau_2^*$ in mind, that $\tau^{(a,b)^c}= \tilde \tau_2^*$ a.s. and that $\tilde \tau_2^*$ will occur at the latest when the state process leaves $B^{f \leq g}\supseteq {B^{f \lesssim h \lesssim g}} \cup {B^{f\leq g<h}} \cup {B^{h<f\leq g}}$. Indeed it holds a.s. that  
\begin{align*}
e^{-r(\tau_1 \wedge \tau^{(a,b)^c})} \tilde f(X_{\tau_1})\mathbb{I}{\{\tau_1\leq\tau^{(a,b)^c}\}}&=e^{-r(\tau_1 \wedge \tau^{(a,b)^c})} f(X_{\tau_1})\mathbb{I}{\{\tau_1 \leq \tau^{(a,b)^c}\}},\\
e^{-r(\tau_1 \wedge \tau^{(a,b)^c})} \tilde  g(X_{\tau^{(a,b)^c}}) &= e^{-r(\tau_1 \wedge \tau^{(a,b)^c})}g(X_{\tau^{(a,b)^c}}).
\end{align*}

\item Simultaneous stopping occurs at any specific $x$ for the equilibrium of the associated game $(\tilde \tau_1^*,\tilde \tau_2^*)$ (cf. Proposition \ref{thm:assoc-game}) if and only if $\tilde f(x)=\tilde g(x)$.

\end{enumerate}

Using \eqref{hl2q4hkl2rnv} and the observations above we find that the equilibrium value of the associated game can, for the present $x$, be written as
\begin{align}\label{2311212}
\begin{split}
& \tilde J(x; \tilde \tau_1^*,\tilde \tau_2^*)\\
&= \sup_{\tau_1 \in \mathbb{T}_1} \tilde J(x; \tau_1,\tilde \tau_2^*) \\
&= \sup_{\tau_1 \in \mathbb{T}_1} \E_x\bigg(e^{-r(\tau_1 \wedge \tau^{(a,b)^c})}\bigg(\tilde f(X_{\tau_1})\mathbb{I}{\{\tau_1 \leq \tau^{(a,b)^c}\}}   + \tilde g(X_{\tau^{(a,b)^c}})  \mathbb{I}{\{\tau_1 > \tau^{(a,b)^c}\}}\bigg) \bigg)\\
&= \sup_{\tau_1 \in \mathbb{T}_1} \E_x\bigg(e^{-r(\tau_1 \wedge \tau^{(a,b)^c})}\bigg(f(X_{\tau_1})\mathbb{I}{\{\tau_1\leq\tau^{(a,b)^c}\}}   + g(X_{\tau^{(a,b)^c}})  \mathbb{I}{\{\tau_1 > \tau^{(a,b)^c}\}} \bigg) \bigg)\\
&= \sup_{\tau_1 \in \mathbb{T}_1} \E_x\bigg(e^{-r(\tau_1 \wedge \tau^{(a,b)^c})}\bigg(f(X_{\tau_1})\mathbb{I}{\{\tau_1<\tau^{(a,b)^c}\}}   + g(X_{\tau^{(a,b)^c}})  \mathbb{I}{\{\tau_1 \geq \tau^{(a,b)^c}\}} \bigg) \bigg).
\end{split}
\end{align}
Note that \eqref{2311212} is based on $\tilde \tau_2^*$ (equilibrium stopping time for Player $2$ for the associated game), whereas 
\eqref{afasfqwreqrqwrf} is based on  $\tau_2^*$  (the proposed equilibrium stopping time for Player $2$ for the present game). 
Relying on \eqref{2311212} we will now prove that \eqref{afasfqwreqrqwrf} holds.

Observe:

\begin{enumerate}[(i)]

\item $\tau_2^* \geq \tau^{(a,b)^c} = \tilde \tau_2^*$ a.s. for the present starting value $X_0=x$. Note that we do not necessarily have equality here since $\tau_2^*$ relies on randomization. 

\item We know that if $|a|,b<\infty$ then $a,b \in \partial D_2^{*}\cap \overline{{B^{f \lesssim h \lesssim g}} \cup {B^{f\leq g<h}} \cup {B^{h<f\leq g}}}$. We also know that $f\leq g$ on $\overline{{B^{f \lesssim h \lesssim g}} \cup {B^{f\leq g<h}} \cup {B^{h<f\leq g}}}$. 
Hence,
\begin{align*}
e^{-r \tau^{(a,b)^c}} f(X_{\tau^{(a,b)^c}}) \leq  e^{-r \tau^{(a,b)^c}} g(X_{\tau^{(a,b)^c}}).
\end{align*}

\item Note that $\mathbb{I}{\{\tau_2^*=\tau^{(a,b)^c}<\infty\}}(\omega)= 1 \Rightarrow X_{ \tau_2^* }(\omega) \in \overline{{B^{f \lesssim h \lesssim g}} \cup {B^{f\leq g<h}} \cup {B^{h<f\leq g}}}\backslash \mbox{int}(B^{f\leq g<h})$ a.s., since Player $2$ can only stop on $\mbox{int}(B^{f\leq g<h})$ with randomization. 

Note that $h\leq g$ on $\overline{{B^{f \lesssim h \lesssim g}} \cup {B^{f\leq g<h}} \cup {B^{h<f\leq g}}}\backslash \mbox{int}(B^{f\leq g<h})$.
Hence, 
\begin{align*}
e^{-r {\tau_2^*}}h(X_{\tau_2^*})\mathbb{I}{\{\tau^{(a,b)^c}=\tau_2^*\}}\mathbb{I}{\{\tau_1=\tau^{(a,b)^c}\}}
\leq 
e^{-r \tau^{(a,b)^c}}g(X_{\tau^{(a,b)^c}})\mathbb{I}{\{\tau^{(a,b)^c}=\tau_2^*\}}\mathbb{I}{\{\tau_1=\tau^{(a,b)^c}\}}.
\end{align*}
\item $J(y;\tau_1,\tau_2^*) \leq g(y)+\epsilon/2$ for $y=a,b$, if $|a|,b<\infty$. Using $a,b \in \partial D_2^{*}\cap \overline{{B^{f \lesssim h \lesssim g}} \cup {B^{f\leq g<h}} \cup {B^{h<f\leq g}}}$ we find that this has been proved in the items above, cf. e.g., \eqref{adqwefas}.
\end{enumerate}
Using these observations and the strong Markov property we find 
\begin{align*}
&  \sup_{\tau_1 \in \mathbb{T}_1}  J(x; \tau_1,\tau_2^*)\\
& = \sup_{\tau_1 \in \mathbb{T}_1}  \E_x\bigg(e^{-r(\tau_1 \wedge  \tau_2^*)}\bigg(f(X_{\tau_1})\mathbb{I}{\{\tau_1<\tau_2^*\}} +   g(X_{\tau_2^*})\mathbb{I}{\{\tau_1>\tau_2^*\}} + h(X_{\tau_2^*})\mathbb{I}{\{\tau_1=\tau_2^*\}}  \bigg) \bigg)\\
& \leq \sup_{\tau_1 \in \mathbb{T}_1}  \E_x\bigg(e^{-r(\tau_1 \wedge  \tau^{(a,b)^c})}\bigg( f(X_{\tau_1})\mathbb{I}{\{\tau_1<\tau^{(a,b)^c}\}}+  g(X_{\tau^{(a,b)^c}})\mathbb{I}{\{\tau_1=\tau^{(a,b)^c}\}}   \\
  & \qquad\qquad\qquad\qquad\qquad\qquad\enskip + J(X_{\tau^{(a,b)^c}};\tau_1,\tau_2^*)\mathbb{I}{\{\tau_1>\tau^{(a,b)^c}\}} \bigg) \bigg)\\
& \leq \sup_{\tau_1 \in \mathbb{T}_1}  \E_x\bigg(e^{-r(\tau_1 \wedge  \tau^{(a,b)^c})}\bigg( f(X_{\tau_1})\mathbb{I}{\{\tau_1<\tau^{(a,b)^c}\}} + (g(X_{\tau^{(a,b)^c}})+\epsilon/2)\mathbb{I}{\{\tau_1\geq\tau^{(a,b)^c}\}}  \bigg) \bigg)\\
& \leq \sup_{\tau_1 \in \mathbb{T}_1}  \E_x\bigg(e^{-r(\tau_1 \wedge  \tau^{(a,b)^c})}\bigg( f(X_{\tau_1})\mathbb{I}{\{\tau_1<\tau^{(a,b)^c}\}}+  g(X_{\tau^{(a,b)^c}})\mathbb{I}{\{\tau_1\geq \tau^{(a,b)^c}\}} \bigg) \bigg) +  \epsilon/2.
\end{align*}
The inequalities above and \eqref{2311212} imply that 
\begin{align} \label{afasfqwreqrqwrf2}
\sup_{\tau_1 \in \mathbb{T}_1} J(x; \tau_1,\tau_2^*)-\epsilon/2 \leq \tilde J(x; \tilde \tau_1^*,\tilde \tau_2^*).
\end{align}

Observe that the following holds for the present $X_0=x$:

\begin{enumerate}[(i)]

\item  $\tilde \tau_i^* \leq \tau_i^*$ and $\tilde \tau_2^* = \tau^{(a,b)^c}$ a.s.

\item Using e.g., \eqref{adqwefas3} we obtain $J(X_{\tilde \tau_2^* };\tau_1^*,\tau_2^*)  \geq g(X_{\tilde \tau_2^*}) - \epsilon/2$ a.s. 

\item It also holds that $J(X_{\tilde \tau_1^* };\tau_1^*,\tau_2^*) \geq f(X_{ \tilde \tau_1^* }) - \epsilon/2$ a.s. Indeed, this inequality can obtained using the equilibrium stopping time for Player $1$ (cf. Construction \ref{construction2}), in analogy with how 
$J(y;\tau_1,\tau_2^*) \leq g(y)+\epsilon/2$ for $y=a,b$ was obtained above.

\end{enumerate}

Using this, we obtain for the present $X_0=x$ that 
\begin{align*}
& J(x; \tau_1^*, \tau_2^*)\\
& = \E_x\bigg(e^{-r(\tilde \tau_1^* \wedge \tilde \tau_2^*)}   J(X_{\tilde \tau_1^* \wedge \tilde \tau_2^*}; \tau_1^*, \tau_2^*)   \bigg)\\
& = \E_x\bigg(e^{-r(\tilde \tau_1^* \wedge \tilde \tau_2^*)}    \bigg( 
J(X_{\tilde \tau_1^*}; \tau_1^*,\tau_2^*)\mathbb{I}{\{\tilde\tau_1^* \leq \tilde \tau_2^*\}}+
J(X_{\tilde \tau_2^*}; \tau_1^*,\tau_2^*)\mathbb{I}{\{\tilde\tau_1^*> \tilde \tau_2^*\}}
\bigg) \bigg)\\
&  \geq  \E_x\bigg(
e^{-r(\tilde\tau_1^* \wedge \tilde \tau_2^*)}\bigg(
f(X_{\tilde\tau_1^*})\mathbb{I}{\{\tilde\tau_1^*\leq \tilde \tau_2^*\}}   + 
g(X_{\tilde\tau_2^*})\mathbb{I}{\{\tilde\tau_1^* > \tilde \tau_2^*\}} 
\bigg) - \epsilon/2\\
&  = \tilde J(x; \tilde \tau_1^*, \tilde \tau_2^*)- \epsilon/2,
\end{align*}
where the last equality also relies on \eqref{2311212}. Now use \eqref{afasfqwreqrqwrf2} and the inequality  above to see that \eqref{afasfqwreqrqwrf} holds.

\item  The remaining points are $x \in {B^{g<f<h}}$. The proposed equilibrium corresponds to Player $1$ stopping immediately and Player $2$ randomizing.  
The corresponding value is $J(x;\tau^*_1,\tau^*_2)=f(x)$. 
Hence, if we can show, for any fixed $\epsilon>0$, that
$$J(x;\tau_1,\tau^*_2) \leq f(x)+\epsilon,$$
for each $\tau_1 \in \mathbb{T}_1$, then the first part of \eqref{def:SE} holds, and we are done; but this statement follows from arguments similar to those for $x \in  {B^{f\leq g<h}} \cap D_1^* \cap D_2^*$, cf. \eqref{234rasf}, when noting that ${B^{g<f<h}} = {B^{g<f<h}} \cap D_1^* \cap D_2^*$ and that $g<f$ on this set.
 \end{enumerate}
 
\hfill \qedsymbol{}\\

\noindent \textbf{Proof of  Corollary \ref{Equilibrium-uniqueness}.} 
The first result can be proved by following the same arguments as those in the proof of Theorem \ref{thm:construction}. 
The second result corresponds to a basic result for general zero-sum games; cf. Section \ref{sec:intro}.  
\hfill \qedsymbol{}

\section{Proofs for Section \ref{sec:howtofind}} \label{proof-app-D}

 The proof of Theorem \ref{simplified-construction} relies on the following result, whose first part has this interpretation: On any interval $B$ where the payoff function of the maximizer dominates the payoff function of the minimizer, i.e., $f>g$, it holds that the maximizer can randomize stopping so that the minimizer wants to stop immediately, assuming that the minimizer stops alone at the latest when leaving $B$ (or that both stop when leaving $B$ and $g=h$ on $\partial B$). The interpretation of the second result is analogous.

\begin{proposition}\label{useful-prop} Suppose $B\subseteq {\cal I}$ is an interval with $g(x)<f(x)$ for all $x\in B$.
\begin{enumerate}[(i)]

\item  Let  
\begin{align*}
A_t:= \int_0^t \frac{1}{f(X_s)-g(X_s)}\mathbb{I}_{\{X_s\in B\}}dC_s^{-}(g), \enskip t \geq 0. 
\end{align*}
Then $A_t<\infty, t < \tau^{\partial B}$ a.s. (while it may occur that $A_{\tau^{\partial B}}=\infty$) and with $\tau_{A}:=\inf\{t\geq 0: A_t\geq E_1\}$ it holds, for all $x \in B$ and $\tau_2 \in \mathbb{T}_2$, that
\begin{align*}
& g (x)\\
& \leq \E_x\left(e^{-r(\tau_A \wedge \tau_2)} \left(
f(X_{\tau_A})\mathbb{I}{\{\tau_A<\tau_2\}} + g(X_{\tau_2})\mathbb{I}{\{\tau_A>\tau_2\}} + h(X_{\tau_2})\mathbb{I}{\{\tau_2=\tau_{A}\}}\right)\mathbb{I}{\{\tau_2 \wedge \tau_A < \tau^{\partial B}\}}\right)\\
& + \E_x\left(e^{-r\tau^{\partial B}}  g(X_{\tau^{\partial B}}) \mathbb{I}{\{\tau_A \wedge \tau_2 \geq  \tau^{\partial B}\}}\right).
\end{align*}
Moreover, if $g$ is twice continuously differentiable except on a countable set of separated points $x_j \in {\cal I},j \in J$ (where $J$ is an index set), then
\begin{align*}
A_t = \int_0^t\left(\frac{(\mathbb{L}_X-r)g(X_s)}{g(X_s)-f(X_s)}\right)_+\mathbb{I}_{\{X_s \in B, X_s \neq x_j,\forall j\}}ds + \frac{1}{2}\sum_{j} \mathbb{I}_{\{x_j \in B\}}\left(\frac{g'(x_j+)-g'(x_j-)}{{g(x_j)-f(x_j)}}\right)_+l_t^{x_j}, \enskip t \geq 0.
\end{align*}

\item Let 
\begin{align}\label{vagew}
A_t:= \int_0^t \frac{1}{f(X_s)-g(X_s)}\mathbb{I}_{\{X_s\in B\}}dC_s^{+}(f), \enskip t \geq 0. 
\end{align}
Then $A_t<\infty, t < \tau^{\partial B}$ a.s. (while it may occur that $A_{\tau^{\partial B}}=\infty$) and with $\tau_{A}:=\inf\{t\geq 0: A_t\geq E_2\}$ it holds, for all $x \in B$ and $\tau_1 \in \mathbb{T}_1$, that
\begin{align*}
\begin{split}
& f (x)\\
& \geq \E_x\left(e^{-r(\tau_1 \wedge \tau_{A})} \left(
f(X_{\tau_1})\mathbb{I}{\{\tau_1<\tau_{A}\}} + g(X_{\tau_{A}})\mathbb{I}{\{\tau_1>\tau_{A}\}} + h(X_{\tau_1})\mathbb{I}{\{\tau_1=\tau_{A}\}}
\right)\mathbb{I}{\{\tau_1 \wedge \tau_{A} < \tau^{ \partial B }\}}\right)\\
& + \E_x\left(e^{-r\tau^{\partial B}}  f(X_{\tau^{\partial B}}) \mathbb{I}{\{\tau_1 \wedge \tau_{A} \geq  \tau^{\partial B}\}}\right).
\end{split}
\end{align*}
Moreover, if $f$ is twice continuously differentiable except on a countable set of separated points $x_i \in {\cal I}, i \in I$ (where $I$ is an index set) then
\begin{align*}
A_t =  \int_0^t\left(\frac{(\mathbb{L}_X-r)f(X_s)}{f(X_s)-g(X_s)}\right)_+\mathbb{I}_{\{X_s \in B, X_s \neq x_i,\forall i\}}ds + 
\frac{1}{2}\sum_{i} \mathbb{I}_{\{x_i \in B\}} \left(\frac{f'(x_i+)-f'(x_i-)}{{f(x_i)-g(x_i)}}\right)_+l_t^{x_i}, \enskip t \geq 0.
\end{align*}

\end{enumerate}

\end{proposition}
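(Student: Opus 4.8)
The plan is to prove part (i); part (ii) then follows by the anti-symmetric re-specification of the game described in Construction \ref{construction2} (swapping the roles of $f$ and $-g$, of maximizer and minimizer, etc.), so I would only write (i) in detail and remark that (ii) is analogous.

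First I would establish the \emph{representation} and \emph{finiteness} claims for $A_t$. Starting from the It\^o--Tanaka decomposition \eqref{2q4124} applied to $g$ in place of $f$, we have $C_t^{-}(g) = \int_{\cal I} (l_t^y/\sigma^2(y))((\mathbb{K}_X-r)g)^{-}(dy)$. On the interval $B$ the continuous function $f-g$ is strictly positive, hence locally bounded away from $0$ on compact subintervals; combined with $A_t = \int_0^t (f(X_s)-g(X_s))^{-1}\mathbb{I}_{\{X_s\in B\}}\,dC_s^{-}(g)$ and the occupation-time formula this gives $A_t = \int_{\cal I}\frac{l_t^y}{\sigma^2(y)(f(y)-g(y))}\mathbb{I}_{\{y\in B\}}((\mathbb{K}_X-r)g)^{-}(dy)$. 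Since local time $l_t^y$ is jointly continuous and, for $t<\tau^{\partial B}$, the process stays in a compact subinterval of $B$ on which $f-g$ is bounded below and $((\mathbb{K}_X-r)g)^{-}$ is a finite measure (Assumption \ref{assum:2} and \cite[Ch. 3.6]{Karatzas2}), we get $A_t<\infty$ for $t<\tau^{\partial B}$ a.s., while $A_{\tau^{\partial B}}$ may be infinite (e.g. when $((\mathbb{K}_X-r)g)^{-}$ puts mass on $\partial B$, producing a local-time push). The smooth-case formula for $A_t$ then follows by substituting the explicit form of $\mathbb{K}_X g$ recorded after Assumption \ref{assum:2}: the absolutely continuous part contributes $\left((\mathbb{L}_X-r)g(X_s)\right)_-/(f-g)(X_s)\,ds = \left((\mathbb{L}_X-r)g(X_s)/(g-f)(X_s)\right)_+\,ds$, and each kink point $x_j$ contributes $\frac12\left((g'(x_j+)-g'(x_j-))/(g(x_j)-f(x_j))\right)_+ l_t^{x_j}$ via the Jordan decomposition picking out the negative part.

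The analytic heart is the inequality for $g(x)$. The idea is to apply the generalized It\^o formula \eqref{2q4124} to $g$ and localize at $\tau := \tau_A \wedge \tau_2 \wedge \tau^{\partial B}$. Write $Y_t := e^{-rt}g(X_t) + \int_0^t e^{-rs}\mathbb{I}_{\{X_s\in B\}}dC_s^{-}(g)$; by \eqref{2q4124}, $Y_{t} = g(x) + \int_0^t e^{-rs}dC_s^{+}(g) + \text{martingale}$, so $Y_{t\wedge\tau}$ is a submartingale (the $dC^{+}$ term is non-decreasing; the stochastic integral is a true martingale up to $\tau$ by the boundedness of $g'_-\sigma$ on the relevant compact interval and Assumption \ref{assum:1}(iv)). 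Hence $g(x) = Y_0 \le \E_x(Y_{\tau})$. The key point is then to bound $\int_0^{\tau} e^{-rs}\mathbb{I}_{\{X_s\in B\}}dC_s^{-}(g)$ in terms of the stopping event. Using that $A$ is exactly the time-change of $E_1$ built from this integrand divided by $f-g$, one shows: on $\{\tau_A < \tau_2 \wedge \tau^{\partial B}\}$ the player stops at $\tau_A\in B$, and the accumulated $dC^{-}(g)$ mass up to $\tau_A$ "converts" a term of size $\int e^{-rs}dC^{-}_s(g)$ into $e^{-r\tau_A}(f(X_{\tau_A})-g(X_{\tau_A}))$ times (the local increment that triggered $E_1$), which, after taking expectations and using the exponential/time-change identity $\E_x(\text{stuff}\cdot dA) $, telescopes precisely so that $\E_x(Y_\tau) \le \E_x\big(e^{-r(\tau_A\wedge\tau_2)}(f(X_{\tau_A})\mathbb{I}_{\{\tau_A<\tau_2\}}+g(X_{\tau_2})\mathbb{I}_{\{\tau_A>\tau_2\}}+h(X_{\tau_2})\mathbb{I}_{\{\tau_2=\tau_A\}})\mathbb{I}_{\{\tau_2\wedge\tau_A<\tau^{\partial B}\}} + e^{-r\tau^{\partial B}}g(X_{\tau^{\partial B}})\mathbb{I}_{\{\tau_A\wedge\tau_2\ge\tau^{\partial B}\}}\big)$. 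Concretely this step is the standard "randomized stopping via additive functional" computation: condition on $E_1$, use $\mathbb{P}(\tau_A > t \mid \mathcal F^X) = e^{-A_t}$, write the right-hand side as $\E_x\int_0^\infty \cdots e^{-A_t}dA_t + \cdots$, and match integrands using $dA_t = (f(X_t)-g(X_t))^{-1}\mathbb{I}_{\{X_t\in B\}}dC^{-}_t(g)$; the factor $(f-g)$ in $dA_t$ cancels the gap $f-g$ between "stop now and get $f$" and "the submartingale value $g$", which is exactly why the intensity $1/(f-g)$ is the right choice.

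The main obstacle I anticipate is handling simultaneous stopping and the boundary behavior rigorously: $\tau_2$ may itself be randomized and may coincide with $\tau_A$ with positive probability, so one must carefully split $\{\tau_2 = \tau_A\}$, $\{\tau_2 < \tau_A\}$, $\{\tau_2 > \tau_A\}$ and verify the $h(X_{\tau_2})\mathbb{I}_{\{\tau_2=\tau_A\}}$ term is dominated correctly (here one uses only that the right-hand side of the claimed inequality already \emph{contains} the $h$-term with the correct sign on $\{\tau_2\wedge\tau_A<\tau^{\partial B}\}$, so no ordering of $h$ relative to $f,g$ is needed — the inequality is a lower bound on the maximizer's guaranteed payoff, and we just need the submartingale estimate $g\le$ everything). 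The other delicate point is the possibility $A_{\tau^{\partial B}}=\infty$, i.e. a forced stop exactly at the boundary: one must argue that on $\{\tau_A \wedge \tau_2 \ge \tau^{\partial B}\}$ the process is stopped at $\tau^{\partial B}$ with payoff $g(X_{\tau^{\partial B}})$ and that the submartingale property survives the localization as $t\uparrow\tau^{\partial B}$ — this uses continuity of $g$, Assumption \ref{assum:1}(iv) for integrability at infinity when $\tau^{\partial B}=\infty$, and monotone/dominated convergence, with $M(x)<\infty$ from \eqref{assum:integrability} providing the needed domination. Once these localization and simultaneous-stopping bookkeeping issues are dispatched, the rest is routine.
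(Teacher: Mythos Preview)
Your overall strategy is sound and can be made to work, but it differs from the paper's route and your handling of the simultaneous-stopping event has a genuine gap.

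\textbf{Comparison with the paper.} The paper proves (ii) (and says (i) is analogous); the key difference is the choice of auxiliary process. Instead of your $Y_t = e^{-rt}g(X_t) + \int_0^t e^{-rs}\mathbb{I}_{\{X_s\in B\}}\,dC_s^{-}(g)$, the paper (translated to part (i)) works with
\[
Z_t := e^{-rt-A_t}g(X_t) + \int_0^t e^{-rs-A_s}f(X_s)\,dA_s,
\]
which, by the identity $\mathbb{P}(\tau_A>t\mid\mathcal F^X_\infty)=e^{-A_t}$ (cited from \cite{bodnariu2022local}), is \emph{exactly} the conditional expected payoff given $\mathcal F^X_\infty$. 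A single product-rule computation using $(f-g)\,dA=\mathbb{I}_B\,dC^{-}(g)$ gives $dZ_t = e^{-A_t}\,dM_t + e^{-rt-A_t}\,dC_t^{+}(g)$, so $Z$ is directly a local submartingale and the inequality follows by optional sampling. Your $Y_t$ is also a submartingale, but it is \emph{not} the conditional payoff; you then need an extra identity
\[
\E_x\!\Big[\!\int_0^{\tau_A\wedge\rho}\! e^{-rs}(f-g)(X_s)\,dA_s\Big]
=\E_x\!\big[e^{-r\tau_A}(f-g)(X_{\tau_A})\mathbb{I}_{\{\tau_A<\rho\}}\big],
\qquad \rho:=\tau_2\wedge\tau^{\partial B},
\]
to convert the accumulated $dC^{-}(g)$ into the $f$-payoff on $\{\tau_A<\rho\}$. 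This identity is true (condition on $\mathcal F^X_\infty\vee\sigma(E_2)$ and use $\mathbb{P}(\tau_A\in ds\mid\cdot)=e^{-A_s}\,dA_s$), but you only gesture at it; the paper's choice of $Z_t$ absorbs this step into the product rule and is cleaner.

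\textbf{The gap.} Your treatment of $\{\tau_2=\tau_A\}$ is incorrect. You write that this event ``may \ldots have positive probability'' and that the $h$-term is handled because it ``already \emph{contains} the $h$-term with the correct sign''. But on $\{\tau_2=\tau_A<\tau^{\partial B}\}$ your argument would need $g(X_{\tau_2})\le h(X_{\tau_2})$, which is nowhere assumed. The correct (and necessary) step is to show that $\{\tau_2=\tau_A<\tau^{\partial B}\}$ is a \emph{null set}: since $A$ is continuous and finite on $[0,\tau^{\partial B})$, the conditional law of $\tau_A$ given $\mathcal F^X_\infty\vee\sigma(E_2)$ is atomless on $(0,\tau^{\partial B})$, while $\tau_2$ is $\sigma(\mathcal F^X_\infty,E_2)$-measurable; independence of $E_1$ and $E_2$ then forces $\mathbb P(\tau_2=\tau_A<\tau^{\partial B})=0$. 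The paper invokes exactly this (citing \cite[Lemma 4.4]{christensen2024existence}) and also shows $\{\tau_A=\tau^{\partial B}<\infty\}$ is null for the same reason. Once these two null-set facts are in place, the $h$-term disappears and your approach (or the paper's) goes through.
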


\begin{proof} 
We present a proof of result (ii). One can prove result (i) analogously.  

 The first assertion holds since $C_t^+(f), t \geq 0$ takes values in $[0,\infty)$, cf. Section \ref{sec:assum}, and $f>g$ on $B$. The last assertion follows from the It\^o-Meyer formula and the occupation density formula applied to the definition of $C_t^+(f)$ as already carried out in Section \ref{sec:assum}. 
 
Let us prove the second assertion:
Using arguments analogous to those in \cite[Proposition 6]{bodnariu2022local} we obtain, for all $\tau \in \mathbb{T}_1$, that a.s.
	\begin{align*}
		\E_x\left(e^{-r(\tau \wedge \tau_{A})} \left(
		f(X_{\tau})\mathbb{I}{\{\tau<\tau_{A}\}} + g(X_{\tau_{A}})\mathbb{I}{\{\tau>\tau_{A}\}}\right) |{\cal F}^X_\infty\right)= 
  \E_x\left(e^{-r\tau -A_\tau}f(X_\tau) +\int_0^\tau e^{-rs -  A_s}g(X_s)dA_s|{\cal F}^X_\infty\right).
	\end{align*}
Using \eqref{2q4124} we also have 
	\begin{align*}
		d(e^{-A_t}e^{-rt}f(X_t))&=e^{-rt}f(X_t)de^{-A_t}+e^{-A_t}d(e^{-rt}f(X_t))\\
		&=-e^{-rt}f(X_t)e^{-A_t}dA_t+e^{-A_t}\left(e^{-rt}dC_t^{+}(f)-e^{-rt}dC_t^{-}(f)+dM_t\right),
	\end{align*}
where $(M_t)$ is a local martingale. From this and \eqref{vagew} we obtain
\begin{align*}
 \begin{split}
		&\;d\left(e^{-rt -A_t}f(X_t)+\int_0^t e^{-rs -A_s}g(X_s)dA_s\right)\\
		&=-e^{-rt-A_t}f(X_t)dA_t+e^{-A_t}\left(e^{-rt}dC_t^{+}(f)-e^{-rt}dC_t^{-}(f)+dM_t\right)+e^{-rt-A_t}g(X_t)dA_t \\
		&=-e^{-rt -A_t}f(X_t)dA_t+e^{-A_t}\left(e^{-rt}(f(X_t)-g(X_t))dA_t-e^{-rt}dC_t^{-}(f)+dM_t\right)+e^{-rt-A_t}g(X_t)dA_t\\
		&=e^{-A_t}dM_t-e^{-rt -A_t}dC_t^{-}(f).
  \end{split}
	\end{align*}
From now on let $\tau:=\tau_1 \wedge \tau^{\partial B}$.  Observe that by definition we directly have
\begin{align*}
\begin{split}
&\E_x\left(e^{-r(\tau \wedge \tau_{A})} \left(
		f(X_{\tau})\mathbb{I}{\{\tau < \tau_{A}\}} + g(X_{\tau_{A}})\mathbb{I}{\{\tau>\tau_{A}\}}\right) \right)\\
  & = \E_x\left(e^{-r(\tau_1 \wedge \tau_{A})} \left(
		f(X_{\tau_1})\mathbb{I}{\{\tau_1< \tau_{A}\}} + g(X_{\tau_{A}})\mathbb{I}{\{\tau_1>\tau_{A}\}}
            \right)\mathbb{I}{\{\tau_1 \wedge \tau_{A} < \tau^{\partial B} \}}\right)\\
    & \enskip  + \E_x\left(e^{-r\tau^{\partial B}} 
		f(X_{\tau^{\partial B}})\mathbb{I}{\{\tau^{\partial B}< \tau_{A}\}}
            \mathbb{I}{\{\tau_1 \wedge \tau_{A} \geq \tau^{\partial B} \}}\right).
\end{split}
\end{align*}
Let us now show that 
\begin{align}\label{q3qgwr2t}
\E_x\left(e^{-r\tau^{\partial B}}  
		f(X_{\tau^{\partial B}})\mathbb{I}{\{\tau^{\partial B}< \tau_{A}\}}
            \mathbb{I}{\{\tau_1 \wedge \tau_{A} \geq \tau^{\partial B} \}}\right)
            = 
\E_x\left(e^{-r\tau^{\partial B}} 
		f(X_{\tau^{\partial B}})
             \mathbb{I}{\{\tau_1 \wedge \tau_{A} \geq \tau^{\partial B} \}}\right).
\end{align}
Note here that we have the behavior on $\{\tau^{\partial B}=\infty\}$ consistent with assumption \eqref{at-infty-assum}. Using this and 
\begin{align*}
\mathbb{I}{\{\tau_1 \wedge \tau_{A} \geq \tau^{\partial B} \}}
& = \mathbb{I}{\{\tau_1 \wedge \tau_{A} \geq \tau^{\partial B} \}}\mathbb{I}{\{\tau^{\partial B}< \tau_A \}}+
\mathbb{I}{\{\tau_1 \wedge \tau_{A} \geq \tau^{\partial B} \}}\mathbb{I}{\{\tau^{\partial B} \geq \tau_A \}}\\
& = \mathbb{I}{\{\tau_1 \wedge \tau_{A} \geq \tau^{\partial B} \}}\mathbb{I}{\{\tau^{\partial B}< \tau_A \}}+
\mathbb{I}{\{\tau_1 \wedge \tau_{A} \geq \tau^{\partial B} \}}\mathbb{I}{\{\tau^{\partial B} = \tau_A \}},
\end{align*}
we see that if it holds that
\begin{align}\label{1325rgw}
            \mathbb{I}{\{\tau_{A}= \tau^{\partial B}< \infty \}}= 0, 
\end{align}
a.s., then \eqref{q3qgwr2t} follows. To see that \eqref{1325rgw} holds, use that $(A_t)$ is a continuous process, the definition of $\tau_A$ and that $E_i$ has a continuous distribution (cf. \cite[Lemma 4.4]{christensen2024existence}). Hence, \eqref{q3qgwr2t} holds.

It can also be verified that $\{\tau_1 = \tau_A < \tau^{\partial B}\}$ is a null set. To see this use that $(A_t)$ is finite on $\{t<\tau^{\partial B}\}$, that $E_i$ are independent and have a continuous distribution and the definition of $\tau_{A}$ (again, cf. \cite[Lemma 4.4]{christensen2024existence}). 
This implies that we have  the first equality in \eqref{31gewsd} below.   

Putting also the other pieces above together we find, for any $\tau:=\tau_1 \wedge \tau^{\partial B}$ 
that is dominated by an exit time from a bounded interval, that  
\begin{align}\label{31gewsd}
	\begin{split}
		& \E_x\left(e^{-r(\tau_1 \wedge \tau_{A})} \left(
		f(X_{\tau_1})\mathbb{I}{\{\tau_1<\tau_{A}\}} + g(X_{\tau_{A}})\mathbb{I}{\{\tau_1>\tau_{A}\}} + h(X_{\tau_1})\mathbb{I}{\{\tau_1=\tau_{A}\}}
		\right)\mathbb{I}{\{\tau_1 \wedge \tau_{A} < \tau^{\partial B}\}}\right)\\
		& + \E_x\left(e^{-r\tau^{\partial B}}  f(X_{\tau^{\partial B}}) \mathbb{I}{\{\tau_1 \wedge \tau_{A} \geq  \tau^{\partial B}\}}\right)\\
  & = \E_x\left(e^{-r(\tau_1 \wedge \tau_{A})} \left(
		f(X_{\tau_1})\mathbb{I}{\{\tau_1<\tau_{A}\}} + g(X_{\tau_{A}})\mathbb{I}{\{\tau_1>\tau_{A}\}}	\right)\mathbb{I}{\{\tau_1 \wedge \tau_{A} < \tau^{\partial B}\}}\right)\\
		& + \E_x\left(e^{-r\tau^{\partial B}}  f(X_{\tau^{\partial B}}) \mathbb{I}{\{\tau_1 \wedge \tau_{A} \geq  \tau^{\partial B}\}}\right)\\
		 &  =\E_x\left(e^{-r(\tau \wedge \tau_{A})} \left(
		f(X_{\tau})\mathbb{I}{\{\tau < \tau_{A}\}} + g(X_{\tau_{A}})\mathbb{I}{\{\tau>\tau_{A}\}}\right) \right)\\
		& = f(x)+\E_x\left(\int_0^\tau  (e^{-A_s}dM_s-e^{-rs -A_s}dC_s^{-}(f))\right)\\
		&  \leq  f(x)+\E_x\left(\int_0^\tau e^{-A_s}dM_s\right) =  f(x).
\end{split}
\end{align}
The general statement now follows from \eqref{at-infty-assum}, \eqref{assum:integrability} and dominated convergence. 
\end{proof}
We also need the following result which says that the equilibrium stopping time randomization intensity processes, i.e., $(A_t^{(i)})_{t\geq 0}$, do not blow up before $(X_t)_{t \geq 0}$ reaches $\partial {B^{g\leq h \leq f}}$. It may, however, be the case that they blow at this point, but we also note that both equilibrium stopping times prescribe immediate stopping on ${B^{g\leq h \leq f}}$. 

\begin{proposition}\label{prop:simul-stopping2X} Let $\Psi_t^{(i)},A_t^{(i)},i=1,2$ be as in Theorem \ref{simplified-construction}.
\begin{enumerate}[(i)]

\item It holds that $A_t^{(1)},A_t^{(2)}<\infty$ for $t<\tau^{\partial {B^{g\leq h \leq f}}}$, $\mathbb{P}_x$-a.s. 

\item Consider an arbitrary fixed $\tau_1 \in \mathbb{T}_1$. If $\tau_1(\omega)=\tau_{\Psi^{(2)}}(\omega)<\infty$ then $X_{\tau_1}(\omega)\in \overline{D_2^* \backslash {B^{g<f<h}}}$, $\mathbb{P}_x$-a.s. Similarly, consider an arbitrary fixed $\tau_2 \in \mathbb{T}_2$. If $\tau_2(\omega)=\tau_{\Psi^{(1)}}(\omega)<\infty$ then $X_{\tau_2}(\omega)\in \overline{D_1^* \backslash {B^{h<g<f}}}$, $\mathbb{P}_x$-a.s. 
\end{enumerate}

\end{proposition}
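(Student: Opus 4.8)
\textbf{Proof proposal for Proposition \ref{prop:simul-stopping2X}.}

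The plan is to reduce both parts to structural facts about the additive functionals $A^{(i)}$ constructed in Theorem \ref{simplified-construction} and to the general machinery already in place. For part (i), recall that under the assumption of Theorem \ref{simplified-construction} we have ${B^{f\leq g<h}}\cup {B^{h<f\leq g}}=\emptyset$, so the randomization intensities reduce to
\begin{align*}
A_t^{(1)}=\int_0^t \frac{1}{f(X_s)-g(X_s)}\mathbb{I}_{\{X_s\in {B^{h<g<f}}\}}dC_s^{-}(g),\qquad
A_t^{(2)}=\int_0^t \frac{1}{f(X_s)-g(X_s)}\mathbb{I}_{\{X_s\in {B^{g<f<h}}\}}dC_s^{+}(f).
\end{align*}
First I would invoke Proposition \ref{useful-prop}: applying part (i) with $B$ an arbitrary connected component of ${B^{h<g<f}}$ (on which $g<f$, so $f-g>0$), we get $A_t^{(1)}<\infty$ for $t<\tau^{\partial B}$; similarly part (ii) with $B$ a component of ${B^{g<f<h}}$ gives $A_t^{(2)}<\infty$ for $t<\tau^{\partial B}$. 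So finiteness can only fail when $X$ reaches $\partial {B^{h<g<f}}$ (resp.\ $\partial {B^{g<f<h}}$). The remaining point is to argue that these boundary points, except possibly those in $\partial {B^{g\leq h \leq f}}$, are ``harmless'': at a boundary point of ${B^{h<g<f}}$ that lies in $\overline{{B^{g\leq h \leq f}}}$ the functional may genuinely blow up (and this is fine, since both equilibrium stopping times prescribe immediate stopping on ${B^{g\leq h \leq f}}$, so the blow-up is never ``seen''), whereas at any other boundary point the measure $\left((\mathbb{K}_X-r)g\right)^{-}$ (resp.\ $\left((\mathbb{K}_X-r)f\right)^{+}$) is locally finite away from ${B^{g\leq h \leq f}}$ by Assumption \ref{assum:2}, and the integrand $1/(f-g)$ is bounded on compact subsets of the open set ${B^{h<g<f}}$; hence $A^{(i)}$ extends continuously and finitely across such a boundary point. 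Patching these local statements over the (locally finitely many, by Assumptions \ref{assum:1} and \ref{assum:2}) relevant boundary points yields $A_t^{(1)},A_t^{(2)}<\infty$ for $t<\tau^{\partial {B^{g\leq h \leq f}}}$.

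For part (ii), I would argue exactly as in Proposition \ref{prop:simul-stopping}(ii), which is precisely the analogous statement for the general construction and is quoted there from \cite[Lemma 4.4]{christensen2024existence}. Concretely: the equilibrium stopping time $\tau_{\Psi^{(2)}}$ has the form $\Psi^{(2)}_t=A^{(2)}_t+\infty\,\mathbb{I}\{\tau^{D_2^*\backslash {B^{g<f<h}}}\leq t\}$, where by part (i) the continuous additive functional $A^{(2)}$ is finite strictly before $X$ hits $\partial{B^{g\leq h \leq f}}$ (and $\tau^{D_2^*\backslash {B^{g<f<h}}}\leq \tau^{\partial{B^{g\leq h\leq f}}}$ since ${B^{g\leq h\leq f}}\subseteq D_2^*\backslash{B^{g<f<h}}$ by Proposition \ref{asso-game:characterization}). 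If $\tau_1(\omega)=\tau_{\Psi^{(2)}}(\omega)<\infty$, then on the event $\{\tau_{\Psi^{(2)}}<\tau^{D_2^*\backslash{B^{g<f<h}}}\}$ stopping would have to come from the jump of $\Psi^{(2)}$ to $\infty$ caused by the continuous additive part $A^{(2)}$ meeting the independent continuously-distributed threshold $E_2$; since $A^{(2)}$ is continuous and finite there, the set where this coincides with the fixed time $\tau_1(\omega)$ is $\mathbb{P}_x$-null (this is the content of \cite[Lemma 4.4]{christensen2024existence}). Hence almost surely $\tau_{\Psi^{(2)}}=\tau^{D_2^*\backslash{B^{g<f<h}}}$ on $\{\tau_1=\tau_{\Psi^{(2)}}<\infty\}$, so $X_{\tau_1}\in\overline{D_2^*\backslash{B^{g<f<h}}}$. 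The statement for $\tau_2$ and $\tau_{\Psi^{(1)}}$ follows by the anti-symmetry of the game (Construction \ref{construction2}), i.e.\ by swapping the roles of $f\leftrightarrow -g$, $g\leftrightarrow -f$, $D_1^*\leftrightarrow D_2^*$, ${B^{h<g<f}}\leftrightarrow {B^{g<f<h}}$.

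The main obstacle I anticipate is the careful bookkeeping in part (i) at the boundary points: one has to separate the two genuinely different cases — a boundary point of ${B^{h<g<f}}$ (resp.\ ${B^{g<f<h}}$) that is adherent to ${B^{g\leq h \leq f}}$, where blow-up is permitted and must simply be tolerated, versus one adherent to ${B^{f \lesssim h \lesssim g}}$, ${B^{h<g<f}}$ itself from the other side, etc., where one must show no blow-up using local finiteness of the Jordan-decomposition measure (Assumption \ref{assum:2}) together with the fact that $f-g$ is bounded away from $0$ on compact subsets of the relevant open set. This requires knowing that the ``bad'' accumulation of mass of $\left((\mathbb{K}_X-r)g\right)^{-}$ is confined to a neighbourhood of $\partial{B^{g\leq h\leq f}}$, which in turn rests on the characterization $\overline{B^{h<g<f}}\subseteq D_1^*$, $\overline{B^{g<f<h}}\subseteq D_2^*$ from Proposition \ref{asso-game:characterization} and the optimal-stopping interpretation of $D_i^*$. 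Everything else is an application of Proposition \ref{useful-prop} and \cite[Lemma 4.4]{christensen2024existence}.
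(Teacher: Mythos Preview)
Your approach to part (ii) matches the paper's: simply invoke Proposition \ref{prop:simul-stopping}(ii), which already cites \cite[Lemma 4.4]{christensen2024existence}. Your extra detail is fine but not needed.

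For part (i), however, you miss the key simplifying observation and as a result tie yourself in knots. The paper's proof is one line: the assumption ${B^{f\leq g<h}}\cup {B^{h<f\leq g}}=\emptyset$ forces
\[
\partial {B^{h<g<f}}\subseteq \partial {B^{g\leq h\leq f}}\qquad\text{and}\qquad \partial {B^{g<f<h}}\subseteq \partial {B^{g\leq h\leq f}}.
\]
To see this for ${B^{h<g<f}}$: any boundary point $x$ satisfies $h(x)\leq g(x)\leq f(x)$ with at least one equality. If $h(x)=g(x)$ then $x\in {B^{g\leq h\leq f}}$. If $h(x)<g(x)=f(x)$ then $x\in {B^{h<f\leq g}}$, which is empty by hypothesis. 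So the only boundary points of ${B^{h<g<f}}$ are already in $\partial {B^{g\leq h\leq f}}$, and Proposition \ref{useful-prop} applied to each connected component of ${B^{h<g<f}}$ (with $B$ that component) gives finiteness of $A_t^{(1)}$ all the way up to $\tau^{\partial {B^{g\leq h\leq f}}}$.

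Your proposed treatment of the ``other'' boundary points---those adjacent to ${B^{f\lesssim h\lesssim g}}$ etc.---is therefore handling a vacuous case. Worse, the argument you sketch for it would not actually work if such points existed: at a boundary point of ${B^{h<g<f}}$ not in $\partial {B^{g\leq h\leq f}}$ one would have $f(x)=g(x)$, so the integrand $1/(f-g)$ is unbounded as $X$ approaches $x$, and local finiteness of $\left((\mathbb{K}_X-r)g\right)^{-}$ does not save you. The point is precisely that the hypothesis of Theorem \ref{simplified-construction} rules this case out.
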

\begin{proof} 
(i) This follows from Proposition \ref{useful-prop}. In particular, to see this for $A_t^{(1)}$, we note that $f>g$ on ${B^{h<g<f}}$ and that the assumption ${B^{f\leq g<h}}\cup {B^{h<f\leq g}}=\emptyset$ implies that 
$x \in \partial ({B^{h<g<f}}) \Rightarrow x \in \partial ({B^{g\leq h \leq f}})$. The proof for $A_t^{(2)}$ is analogous. 
(ii) This holds by Proposition \ref{prop:simul-stopping}(ii). 
\end{proof}

\noindent\textbf{Proof of Theorem \ref{simplified-construction}.} The last part of the result follows directly from the first part, using Proposition \ref{useful-prop}.  

The remainder of this proof follows the same structure as the proof of Theorem \ref{thm:construction}. 
Indeed to show that the first inequality in \eqref{def:SE} holds with $\epsilon=0$ for $x\in {B^{g\leq h \leq f}} \cup {B^{h<g<f}}$, we use the same arguments as in the proof of Theorem \ref{thm:construction}, relying also on Proposition \ref{prop:simul-stopping2X}. This also holds for $x\in {B^{f \lesssim h \lesssim g}}\cap D_2^*$. Using the assumption ${B^{f\leq g<h}} \cup {B^{h<f\leq g}}=\emptyset$ we can moreover adjust the arguments that lead to \eqref{afasfqwreqrqwrf} to see that this inequality holds with $\epsilon=0$, for $x\in {B^{f \lesssim h \lesssim g}}\cap (D_2^*)^c$.   
Hence, the first inequality in \eqref{def:SE} holds with $\epsilon=0$ for $x\in {B^{f \lesssim h \lesssim g}}$, and it only remains to show this for ${B^{g<f<h}}$: 

Set $\tau_{A}:=\inf\{t\geq 0: A_t^{(2)}\geq E_1\}$. Using the strong Markov property, 
that $x \in \partial {B^{g<f<h}} \Rightarrow x \in \partial {B^{g\leq h \leq f}}$ (since ${B^{f\leq g<h}}=\emptyset$) implying  $J(X_{\tau^{\partial {B^{g<f<h}}}}; \tau_1,\tau_2^*)\leq h(X_{\tau^{\partial {B^{g<f<h}}}})=f(X_{\tau^{\partial {B^{g<f<h}}}})$ a.s.  
and Proposition \ref{useful-prop} (for the last inequality), we find, for any $x\in {B^{g<f<h}}$ and $\tau_1 \in \mathbb{T}_1$, that 
\begin{align*}
&  J(x; \tau_1,\tau_2^*)\\
& =  \E_x\bigg(e^{-r(\tau_1 \wedge  \tau_2^*)}\bigg(f(X_{\tau_1})\mathbb{I}{\{\tau_1<\tau_2^*\}} + g(X_{\tau_2^*})\mathbb{I}{\{\tau_1>\tau_2^*\}}  + h(X_{\tau_2^*})\mathbb{I}{\{\tau_1=\tau_2^*\}} \bigg) \bigg)\\
& =  \E_x\bigg(e^{-r(\tau_1 \wedge  \tau_2^*)}\bigg(f(X_{\tau_1})\mathbb{I}{\{\tau_1<\tau_2^*\}} +   g(X_{\tau_2^*})\mathbb{I}{\{\tau_1>\tau_2^*\}} + h(X_{\tau_2^*})\mathbb{I}{\{\tau_1=\tau_2^*\}} \bigg) \mathbb{I}{\{\tau_1 \wedge \tau_2^* <\tau^{\partial {B^{g<f<h}}}\}} \bigg)\\
& \enskip +  \E_x\bigg(e^{-r\tau^{\partial {B^{g<f<h}}}}    J(X_{\tau^{\partial {B^{g<f<h}}}}; \tau_1,\tau_2^*)      \mathbb{I}{\{\tau_1 \wedge \tau_2^* \geq \tau^{\partial {B^{g<f<h}}}\}}\bigg)\\
& \leq \E_x\bigg(e^{-r(\tau_1 \wedge  \tau_2^*)}\bigg(f(X_{\tau_1})\mathbb{I}{\{\tau_1<\tau_2^*\}} +   g(X_{\tau_2^*})\mathbb{I}{\{\tau_1>\tau_2^*\}} + h(X_{\tau_2^*})\mathbb{I}{\{\tau_1=\tau_2^*\}}\bigg) \mathbb{I}{\{\tau_1 \wedge \tau_2^* < \tau^{\partial {B^{g<f<h}}}\}} \bigg)\\
& \enskip +  \E_x\bigg(e^{-r\tau^{\partial {B^{g<f<h}}}  }    f(X_{\tau^{\partial {B^{g<f<h}}}}) \mathbb{I}{\{\tau_1 \wedge \tau_2^* \geq  \tau^{\partial {B^{g<f<h}}}\}}\bigg)\\
& \leq \E_x\bigg(e^{-r(\tau_1 \wedge  {\tau_{  A}})}\bigg(f(X_{\tau_1})\mathbb{I}{\{\tau_1<{\tau_{  A}}\}} +   g(X_{{\tau_{  A}}})\mathbb{I}{\{\tau_1>{\tau_{  A}}\}} + h(X_{{\tau_{  A}}})\mathbb{I}{\{\tau_1={\tau_{  A}}\}}\bigg) \mathbb{I}{\{\tau_1 \wedge {\tau_{  A}} < \tau^{\partial {B^{g<f<h}}}\}} \bigg)\\
& \enskip +  \E_x\bigg(e^{-r\tau^{\partial {B^{g<f<h}}}  }    f(X_{\tau^{\partial {B^{g<f<h}}}}) \mathbb{I}{\{\tau_1 \wedge {\tau_{A}} \geq  \tau^{\partial {B^{g<f<h}}}\}}\bigg)\\
& \leq f(x) = J(x; \tau_1^*,\tau_2^*),
\end{align*}
where the last equality holds since Player $1$ stops without randomization on ${B^{g<f<h}}$ (which is an open set). Hence, the first inequality in \eqref{def:SE} holds with $\epsilon=0$ for $x\in {B^{g<f<h}}$, i.e., the equilibrium condition is satisfied for Player $1$.  In view of Construction \ref{construction2} it is clear that the corresponding result for Player $2$ also holds (cf. the remarks made in the beginning of the proof of Theorem \ref{thm:construction}). \hfill \qedsymbol{}

\section{Proofs for Section \ref{sec:NE-condi}} \label{proof-app-E}

\noindent \textbf{Proof of Theorem \ref{thm:NE-conditions2}.}
Suppose condition (i) holds. Suppose, in order to obtain a contradiction, that a global Markovian pure Nash equilibrium exists; denote its value by $V$. Consider an arbitrary point $x\in {B^{g<f<h}} \cup {B^{f\leq g<h}}$.  
Observe that both players stopping at $x$, giving the payoff $h(x)$, cannot happen in equilibrium since Player $2$ would benefit from deviating (since $f(x)<h(x)$). 
Observe that Player $2$ stopping at $x$ and Player $1$ not stopping at $x$, giving the payoff $g(x)$, cannot happen in equilibrium since Player $1$ would benefit from deviating (since $h(x)>g(x)$).

These observations imply that for the global Markovian pure Nash equilibrium (which we have supposed exists), it holds that Player $2$ does not stop on ${B^{g<f<h}} \cup {B^{f\leq g<h}}$. 
Now, given a Markovian pure stopping time for Player $2$ that never stops on ${B^{g<f<h}} \cup {B^{f\leq g<h}}$ it holds that Player $1$ can always obtain a reward corresponding to the left hand side in \eqref{asdasda}, i.e., the equilibrium value $V$ has to satisfy
\begin{align*}
V(x_0) \geq \sup_{\tau < \tau^{({B^{g<f<h}} \cup {B^{f\leq g<h}})^c}}\mathbb{E}_{x_0}(e^{-r\tau}f(X_\tau)) > f (x_0) \vee g(x_0)
\end{align*}
(where the last inequality also relies on \eqref{asdasda}). This contradicts the result that the equilibrium value $V$ is found between $f$ and $g$ (cf. Theorem \ref{thm:construction} and Proposition \ref{asso-game:characterization}). We have thus reached a contradiction. Hence, no global Markovian pure Nash equilibrium exists under condition (i). The result for condition (ii) can be proved analogously.
\hfill \qedsymbol{}

\bibliographystyle{abbrv}
\bibliography{Bibl}

\begin{thebibliography}{10}

\bibitem{baurdoux2004further}
E.~J. Baurdoux and A.~E. Kyprianou.
\newblock Further calculations for {I}sraeli options.
\newblock {\em Stochastics: An International Journal of Probability and
  Stochastic Processes}, 76(6):549--569, 2004.

\bibitem{bayraktar2017robust}
E.~Bayraktar and S.~Yao.
\newblock On the robust {D}ynkin game.
\newblock {\em The Annals of Applied Probability}, 27(3):1702--1755, 2017.

\bibitem{bodnariu2022local}
A.~Bodnariu, S.~Christensen, and K.~Lindensj{\"o}.
\newblock Local time pushed mixed equilibrium strategies for time-inconsistent
  stopping problems.
\newblock {\em SIAM Journal on Control and Optimization}, 62(2):1261--1290,
  2024.

\bibitem{borodin2012handbook}
A.~N. Borodin and P.~Salminen.
\newblock {\em Handbook of Brownian Motion-facts and formulae}.
\newblock Birkh{\"a}user, 2012.

\bibitem{christensen2020time}
S.~Christensen and K.~Lindensj{\"o}.
\newblock On time-inconsistent stopping problems and mixed strategy stopping
  times.
\newblock {\em Stochastic Processes and their Applications}, 130(5):2886--2917,
  2020.

\bibitem{christensen2020Timemyopic}
S.~Christensen and K.~Lindensj{\"o}.
\newblock Time-inconsistent stopping, myopic adjustment \& equilibrium
  stability: with a mean-variance application.
\newblock in Stochastic Modeling and Control, (eds: J. Jakubowski, M.
  Nieweglowski, M. Rasonyi and L. Stettner), Banach Center Publications
  (Institute of Mathematics, Polish Academy of Sciences) 122, 53-76, 2020.

\bibitem{christensen2023markovian}
S.~Christensen, K.~Lindensj{\"o}, and B.~A. Neumann.
\newblock Markovian randomized equilibria for general {M}arkovian {D}ynkin
  games in discrete time.
\newblock {\em arXiv preprint arXiv:2307.13413}, 2023.

\bibitem{christensen2024existence}
S.~Christensen and B.~Schultz.
\newblock On the existence of {M}arkovian randomized equilibria in {D}ynkin
  games of war-of-attrition-type.
\newblock {\em arXiv preprint arXiv:2406.09820}, 2024.

\bibitem{cvitanic1996backward}
J.~Cvitanic and I.~Karatzas.
\newblock Backward stochastic differential equations with reflection and
  {D}ynkin games.
\newblock {\em The Annals of Probability}, 24(4):2024--2056, 1996.

\bibitem{de2018nash}
T.~De~Angelis, G.~Ferrari, and J.~Moriarty.
\newblock Nash equilibria of threshold type for two-player nonzero-sum games of
  stopping.
\newblock {\em The Annals of Applied Probability}, 28(1):112--147, 2018.

\bibitem{de2020value}
T.~De~Angelis, N.~Merkulov, and J.~Palczewski.
\newblock On the value of non-{M}arkovian dynkin games with partial and
  asymmetric information.
\newblock {\em The Annals of Applied Probability}, 32(3):1774--1813, 2022.

\bibitem{decamps2023war}
J.-P. Decamps, F.~Gensbittel, and T.~Mariotti.
\newblock The war of attrition under uncertainty: Theory and robust testable
  implications.
\newblock {\em Available at SSRN 4642426}, 2023.

\bibitem{decamps2024mixed}
J.-P. D{\'e}camps, F.~Gensbittel, and T.~Mariotti.
\newblock Mixed {M}arkov-perfect equilibria in the continuous-time war of
  attrition.
\newblock {\em arXiv preprint arXiv:2407.04878}, 2024.

\bibitem{dynkin1969game}
E.~B. Dynkin.
\newblock A game-theoretic version of an optimal stopping problem.
\newblock In {\em Doklady Akademii Nauk}, volume 185, pages 16--19. Russian
  Academy of Sciences, 1969.

\bibitem{ekstrom2017dynkin}
E.~Ekstr{\"o}m, K.~Glover, and M.~Leniec.
\newblock {D}ynkin games with heterogeneous beliefs.
\newblock {\em Journal of Applied Probability}, 54(1):236--251, 2017.

\bibitem{ekstrom2022detect}
E.~Ekstr{\"o}m, K.~Lindensj{\"o}, and M.~Olofsson.
\newblock How to detect a salami slicer: A stochastic controller-and-stopper
  game with unknown competition.
\newblock {\em SIAM Journal on Control and Optimization}, 60(1):545--574, 2022.

\bibitem{ekstrom2008optimal}
E.~Ekstr{\"o}m and G.~Peskir.
\newblock Optimal stopping games for {M}arkov processes.
\newblock {\em SIAM Journal on Control and Optimization}, 47(2):684--702, 2008.

\bibitem{ferenstein2007randomized}
E.~Z. Ferenstein.
\newblock Randomized stopping games and {M}arkov market games.
\newblock {\em Mathematical Methods of Operations Research}, 66:531--544, 2007.

\bibitem{grun2013dynkin}
C.~Gr{\"u}n.
\newblock On {D}ynkin games with incomplete information.
\newblock {\em SIAM Journal on Control and Optimization}, 51(5):4039--4065,
  2013.

\bibitem{Karatzas2}
I.~Karatzas and S.~E. Shreve.
\newblock {\em Brownian Motion and Stochastic Calculus (Graduate Texts in
  Mathematics), 2nd edition}.
\newblock Springer, 1991.

\bibitem{kifer2000game}
Y.~Kifer.
\newblock Game options.
\newblock {\em Finance and Stochastics}, 4:443--463, 2000.

\bibitem{laraki2005value}
R.~Laraki and E.~Solan.
\newblock The value of zero-sum stopping games in continuous time.
\newblock {\em SIAM Journal on Control and Optimization}, 43(5):1913--1922,
  2005.

\bibitem{martyr2021nonzero}
R.~Martyr and J.~Moriarty.
\newblock Nonzero-sum games of optimal stopping and generalized {N}ash
  equilibrium problems.
\newblock {\em SIAM Journal on Control and Optimization}, 59(2):1443--1465,
  2021.

\bibitem{peskir2009optimal}
G.~Peskir.
\newblock Optimal stopping games and {N}ash equilibrium.
\newblock {\em Theory of Probability \& Its Applications}, 53(3):558--571,
  2009.

\bibitem{Peskir}
G.~Peskir and A.~Shiryaev.
\newblock {\em Optimal Stopping and Free-Boundary Problems}.
\newblock Birkhauser, 2006.

\bibitem{revuz2013continuous}
D.~Revuz and M.~Yor.
\newblock {\em Continuous martingales and Brownian motion}, volume 293.
\newblock Springer Science \& Business Media, 2013.

\bibitem{riedel2017subgame}
F.~Riedel and J.-H. Steg.
\newblock Subgame-perfect equilibria in stochastic timing games.
\newblock {\em Journal of Mathematical Economics}, 72:36--50, 2017.

\bibitem{rosenberg2001stopping}
D.~Rosenberg, E.~Solan, and N.~Vieille.
\newblock Stopping games with randomized strategies.
\newblock {\em Probability theory and related fields}, 119(3):433--451, 2001.

\bibitem{schultz2024differentiabilityrewardfunctionalscorresponding}
B.~Schultz.
\newblock On differentiability of reward functionals corresponding to markovian
  randomized stopping times.
\newblock {\em arXiv preprint arXiv:2411.11393}, 2024.

\bibitem{shiryaev2007optimal}
A.~N. Shiryaev.
\newblock {\em Optimal stopping rules}, volume~8.
\newblock Springer Science \& Business Media, 2007.

\bibitem{touzi2002continuous}
N.~Touzi and N.~Vieille.
\newblock Continuous-time {D}ynkin games with mixed strategies.
\newblock {\em SIAM Journal on Control and Optimization}, 41(4):1073--1088,
  2002.

\end{thebibliography}

\end{document}